\newtheorem{theorem}{Theorem}[section]
\newtheorem{lemma}[theorem]{Lemma}
\newtheorem{corollary}[theorem]{Corollary}
\newtheorem{proposition}[theorem]{Proposition}
\newtheorem{notation}[theorem]{Notation}
\theoremstyle{remark}
\newtheorem{remark}[theorem]{Remark}
\theoremstyle{definition}
\newtheorem{definition}[theorem]{Definition}
\newtheorem{example}[theorem]{Example}
\newtheorem{observation}[theorem]{Observation}
\DeclareMathOperator{\lcm}{lcm}
\DeclareMathOperator{\pd}{pdim}
\DeclareMathOperator{\supp}{supp}
\DeclareMathOperator{\sbridge}{sb}
\DeclareMathOperator{\characteristic}{char}
\newcommand{\G}{\mathcal{G}}
\def\G{{\mathcal G}}
\def\G{{\mathcal G}}
\def\1{{\bf 1}}
\def\0{{\bf 0}}
\begin{document}
\title{\textbf{Minimal Cellular Resolutions of Path Ideals}}

\author{Trung Chau}
\address{Department of Mathematics, University of Utah, 155 South 1400 East, Salt Lake City, UT~84112, USA}
\email{trung.chau@utah.edu}

\author{Selvi Kara}
\address{Department of Mathematics, Bryn Mawr College,  906 New Gulph Rd, Bryn Mawr, PA 19010, USA}
\email{skara@brynmawr.edu}

\author{Kyle Wang}
\address{Department of Mathematics \& Statistics, Dalhousie University 
6316 Coburg Rd., PO BOX 15000, Halifax, NS, Canada}
\email{yp259112@dal.ca}

\begin{abstract}
In this paper, we prove that the path ideals of both paths and cycles have minimal cellular resolutions. Specifically, these minimal free resolutions coincide with the Barile-Macchia resolutions for paths, and their generalized counterparts for cycles. Furthermore, we identify edge ideals of cycles as a class of ideals that lack a minimal Barile-Macchia resolution, yet have a minimal generalized Barile-Macchia resolution.
\end{abstract}

\maketitle

\section{Introduction}

It has been a powerful approach to associate a combinatorial object with an algebraic one and study its algebraic properties via combinatorics \cite{eagon1998resolutions,ha2008monomial, katzman2006characteristic, kiani2016castelnuovo, terai2012cohen}. In the spirit of this approach, the algebraic objects of interest in this paper are path ideals, while the combinatorial counterparts are graphs. Specifically, our focus is on studying the path ideals of graphs and their minimal free resolutions by leveraging the underlying structure of the graphs. Central to this work is the use of (generalized) Barile-Macchia resolutions from \cite{chau2022barile}. Such resolutions fall under the umbrella of Morse resolutions, a class of cellular resolutions first introduced by Batzies and Welker in \cite{BW02}, and they generalize  the minimal free resolution constructed in \cite{BM20} for edge ideals of forests. These resolutions are obtained using homogeneous acyclic matchings, a concept from discrete Morse theory. In addition to the Barile-Macchia resolution, other examples of Morse resolutions have been introduced in the literature. One recent example is the \emph{pruned resolutions} from \cite{Pruned20}.

Path ideals, first introduced by Conca and De Negri in \cite{conca1999m}, have been studied for their algebraic properties \cite{Cn, AF18, banerjee2017regularity, BHK10, campos2014depths}. Path ideals can be seen as a generalization of edge ideals, which have been of significant interest in recent years. Let $G=(V,E)$ be a finite, simple graph with the vertex set $V=\{x_1,\ldots, x_n\}$. Associating the vertices of $G$ with the variables in the polynomial ring $R=\Bbbk [x_1,\ldots, x_n]$, where $\Bbbk$ is any field, the edge ideal of $G$ is then generated by monomials of the form $x_ix_j$ for every $\{x_i,x_j\} \in E$. In essence, edge ideals arise from monomials corresponding to edges in $G$, which are inherently paths of length 1. Extending this idea, ideals generated by monomials corresponding to paths of a specified length in $G$ are called the path ideals of $G$.

In this paper, our goal is to construct minimal resolutions for path ideals of paths and cycles. While there is literature  discussing and providing explicit formulas for the (graded) Betti numbers of these ideals \cite{AF18, PathTree2010}, no construction has yet been provided for their minimal resolution. We achieve this by working with (generalized) Barile-Macchia resolutions from \cite{chau2022barile}, thereby expanding the class of ideals for which these resolutions are minimal.

Our two main results are:

\begin{enumerate}
    \item We establish that path ideals of paths have the \emph{bridge-friendly} property (\cref{thm:path_friendly_min}). This property ensures the minimality of Barile-Macchia resolutions as described in \cite{chau2022barile}. Thus, we determine that the path ideals of paths admit a minimal Barile-Macchia resolution.  From this, we derive formulas for their projective dimension and graded Betti numbers, recovering results from \cite{AF18} and \cite{BHK10} (\cref{cor:pdim_path}, \cref{thm:betti_path}).

    \item We shift our focus to path ideals of cycles. In this context, Barile-Macchia resolutions are not always minimal. One instance is the edge ideal of a 9-cycle, \(I_2(C_9)\), which, as indicated in \cite{chau2022barile}, does not have any minimal Barile-Macchia resolution. Nonetheless, we prove that path ideals of cycles have a minimal generalized Barile-Macchia resolution (\cref{thm:main_cycle}). 
\end{enumerate}

Our results on minimal cellular resolutions of path ideals for paths and cycles generalize the findings from \cite{Pruned20}, where it is shown that edge ideals of paths and cycles have minimal pruned resolutions.

Both Barile-Macchia and generalized Barile-Macchia resolutions are induced by  homogeneous acyclic matchings, called Barile-Macchia and generalized Barile-Macchia matchings, respectively. At the heart of the Barile-Macchia matching construction is the comparison of least common multiples of subsets of the minimal generating set $\G(I)$ of a monomial ideal $I$, with respect to a total order on $\G(I)$. An algorithm for producing Barile-Macchia matchings was introduced by the first two authors in \cite{chau2022barile}. In addition,  \texttt{MorseResolutions Macaulay2} package dedicated to Barile-Macchia resolutions were introduced by the first two authors and  O'Keefe in \cite{chau2023morseresolutions}. While the generalized version  adopts the same foundational principle, it extends to multiple total orders on $\G(I)$ as discussed in \cref{sec:cycle}. For further details, refer to \cite{chau2022barile}.

An important concept in relating homogeneous acyclic matchings and free resolutions is \emph{critical cells}: These are subsets of $\G(I)$ that remain untouched by a homogeneous acyclic matching of $I$. In \cite{BW02}, it was shown that these cells are in one-to-one correspondence with the ranks of free modules from (generalized) Barile-Macchia resolutions. Thus, in this paper, we focus on characterizing critical cells of path ideals using \emph{bridges, gaps} and \emph{true gaps} -- simple yet powerful concepts rooted in the graph's structure as introduced in \cite{chau2022barile}. We use these notions to produce minimal free resolutions of path ideals of paths and cycles.

A key observation concerning the critical cells of path ideals of paths is that two distinct critical cells have different least common multiples. This observation proves useful in constructing a critical cell of maximum size, which in turn allows us to deduce a formula for the projective dimension of path ideals of paths. Additionally, this insight is helpful in identifying all critical cells of path ideals of cycles. Such cells consists of critical cells of path ideals of induced paths as well as critical cells whose least common multiple has the largest multidegree  -- for a cycle on $n$ vertices, it would be $x_1\dots x_n$. Consequently, in the paper's final section, our study primarily focuses on identifying the critical cells of the latter type for cycles.

The structure of this paper is as follows. In \cref{sec2}, we revisit essential concepts and results relevant to Morse resolutions, as well as Barile-Macchia matchings and resolutions. In \cref{sec:min_path}, we explore path ideals of paths. We start this section by delving into the characterizations of bridges, gaps, and true gaps specific to paths. With these characterizations in hand, we obtain the bridge-friendliness and affirm the minimality of the associated Barile-Macchia resolution. This paves the way for us to introduce formulas for the projective dimension and to provide a recursive formula for graded Betti numbers. Lastly, in \cref{sec:cycle}, we turn our attention to path ideals of cycles, offering both a review of and insights into the application of generalized Barile-Macchia resolutions. After characterizing the bridges, gaps, and true gaps, we verify the minimality of their generalized Barile-Macchia resolutions by drawing upon our earlier findings on path ideals.

\section{Preliminaries}\label{sec2}

\subsection{Morse resolutions}
Let $I$ be a monomial ideal in the polynomial ring $R = \Bbbk[x_1,\ldots, x_N]$ with a minimal generating set denoted by $\mathcal{G}(I) = \{m_1,\ldots, m_n\}$. We associate to $I$ the Taylor simplex $X$. The vertices of $X$ correspond to the generators of $I$, whereas its cells are labeled by the least common multiple of the labels of their incident vertices. We also associate a directed graph \(G_X=(V,E)\) with this structure, where \(V\) denotes the cells of \(X\), or equivalently, the subsets of \(\mathcal{G}(I)\).
The edge set $E$ consists of directed edges of the form $(\sigma, \sigma ')$ where $\sigma' \subseteq \sigma$ and $|\sigma'| = |\sigma| - 1$. For any subset $A$ of $E$, we define $G_X^A$ as the directed graph having vertex set $V$ and edge set 
\[E(G_X^A) = (E \setminus A) \cup \{(\sigma', \sigma) \mid (\sigma, \sigma') \in A\}.\] 
Essentially, $G_X^A$ is derived from $G_X$ by reversing the direction of edges belonging to $A$.

Central to our discussion on a Morse resolution of $I$ is the notion of homogeneous acyclic matchings from discrete Morse theory, a concept we revisit below.

\begin{definition}\label{def:acyclicmatch}
    A subset \(A \subseteq E\) is called a  \textbf{homogeneous acyclic matching} of \(I\) if the following conditions hold:
    \begin{enumerate}
        \item (\emph{matching}) Each cell appears in, at most, one edge of \(A\).
        \item (\emph{acyclicity}) The graph $G^A_X$ does not contain a directed cycle.
        \item (\emph{homogeneity}) For any edge \((\sigma,  \sigma')\) in \(A\), we have \(\lcm(\sigma) = \lcm(\sigma')\).
    \end{enumerate}
A cell \(\sigma\) that does not appear in any edge of \(A\) is called an \(A\)-\textbf{critical} cell of $I$. In contexts where there is no ambiguity, we simply refer to it as \textit{critical}.
\end{definition}

Recall that $X$ is a $\mathbb{Z}^N$-graded complex that induces a free resolution $\mathcal{F}$ where $\mathcal{F}_r$ is the free $R$-module with a basis indexed by all cells of cardinality \(r\). The differentials \(
\partial_r\colon \mathcal{F}_r \to \mathcal{F}_{r-1},
\) are
defined as
\[
\partial_r(\sigma) = \sum_{\substack{\sigma' \subseteq \sigma,\\|\sigma'| = r-1}} [\sigma:\sigma'] \frac{\lcm(\sigma)}{\lcm(\sigma')} \sigma'.
\]
where \( [\sigma:\sigma'] \) denotes the coefficient of \(\sigma'\) in the boundary of \(\sigma\) and is either \(1\) or \(-1\). The complex  \(\mathcal{F}\) is called the \textit{Taylor resolution} of $R/I$.

Morse resolutions are refinements of the Taylor resolution. Each homogeneous acyclic matching yields a Morse resolution, and these resolutions may coincide. To precisely define the differentials of a Morse resolution, we need to introduce some additional terminology.

Given a directed edge \((\sigma, \sigma') \in E(G^A)\), set
\[
m(\sigma,\sigma')=
\begin{cases}
    -[\sigma':\sigma] & \text{if } (\sigma', \sigma) \in A,\\
     [\sigma:\sigma'] & \text{otherwise}.
\end{cases}
\]
A \textit{gradient path} \(\mathcal{P}\) from \(\sigma_1\) to \(\sigma_t\) is a directed path
\[
\mathcal{P}\colon \sigma_1 \to \sigma_2 \to \cdots \to \sigma_t
\] 
in \(G^A_X\). Similarly, set 
\(
m(\mathcal{P})=m(\sigma_1,\sigma_2) \cdot m(\sigma_2,\sigma_3) \cdots m(\sigma_{t-1}, \sigma_t).
\)

We are now ready to recall Morse resolutions of monomial ideals.

\begin{theorem}\cite[Proposition 2.2, Proposition 3.1,  Lemma 7.7]{BW02} \label{thm:morseres}
    Let \(A\) be a homogeneous acyclic matching of \(I\). Then \(A\) induces a cellular resolution \(\mathcal{F}_A\), where:
    \begin{itemize}
        \item \((\mathcal{F}_A)_r\) is the free \(R\)-module with a basis indexed by all critical cells of cardinality \(r\).
        \item The differentials  \( \partial_r^A:(\mathcal{F}_A)_r\to (\mathcal{F}_A)_{r-1}\) are defined by:
        \[
        \partial_r^A(\sigma) = \sum_{\substack{\sigma' \subseteq \sigma, \\ |\sigma'|=r-1}} [\sigma:\sigma'] \sum_{\substack{\sigma'' \text{ is critical}, \\ |\sigma''|=r-1}} \sum_{\mathcal{P} \text{ is a gradient path } \\ \text{from } \sigma' \text{ to } \sigma''} m(\mathcal{P}) \frac{\lcm(\sigma)}{\lcm(\sigma'')} \sigma''.
        \]
    \end{itemize}
    The resulting (cellular) free resolution \(\mathcal{F}_A\) is called the \emph{Morse resolution} of \(R/I\) associated to \(A\). Furthermore, \(\mathcal{F}_A\) is minimal if for any two $A$-critical cells $\sigma$ and $\sigma'$ with $|\sigma'|=|\sigma|-1$, we have $\lcm(\sigma)\neq \lcm(\sigma')$.
\end{theorem}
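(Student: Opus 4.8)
The plan is to prove this by invoking the algebraic discrete Morse theory of Batzies and Welker, of which it is the central statement; here I sketch the architecture one would follow. The Taylor complex $X$ is a regular CW complex whose (labeled) cellular chain complex is exactly the Taylor resolution $\mathcal{F}$ of $R/I$, and a homogeneous acyclic matching $A$ is precisely a \emph{Morse matching} on the face poset of $X$ together with the extra condition that matched faces carry equal labels. The first step is purely topological: for an acyclic matching on a regular CW complex there is a \emph{Morse complex} $X_A$ whose cells are the $A$-critical cells and which is homotopy equivalent to $X$; the acyclicity of $A$ (no directed cycle in $G_X^A$) is exactly what guarantees that this collapse is well-defined and that only finitely many gradient paths run between any two critical cells. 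The second step promotes this to the $\mathbb{Z}^N$-graded, $R$-free setting: one checks that the homogeneity condition $\lcm(\sigma)=\lcm(\sigma')$ on every matched edge lets the attaching data of $X_A$ be chosen compatibly with the multigrading, so that the cellular chain complex of $X_A$ is a complex $\mathcal{F}_A$ of free $R$-modules with basis the critical cells, graded as stated.

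Next I would pin down the differential. Reversing the matched edges turns $G_X$ into $G_X^A$, and the standard incidence computation in discrete Morse theory expresses the boundary map of $X_A$ as a signed sum over gradient paths between critical cells of consecutive cardinality, with each path weighted by the product $m(\mathcal{P})$ of its local incidence/matching signs. In the labeled setting every down-step of a gradient path contributes a factor $\lcm(\text{source})/\lcm(\text{target})$, while every reversed (matched) step contributes $1$ by homogeneity; since $\lcm$ is therefore non-increasing along gradient paths, all intermediate factors telescope and what survives is the single monomial $\lcm(\sigma)/\lcm(\sigma'')$, with $\lcm(\sigma'')\mid\lcm(\sigma)$ guaranteed. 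Collecting terms over all $\sigma'\subseteq\sigma$ of cardinality $r-1$, all critical $\sigma''$, and all gradient paths $\sigma'\to\sigma''$ yields the displayed formula for $\partial_r^A$.

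The main obstacle --- the genuine content --- is exactness of $\mathcal{F}_A$: one must show $X_A$ is homotopy equivalent to $X$ compatibly with the multigrading, not merely that $(\partial^A)^2=0$. I would do this by constructing, from the matching, an explicit homotopy equivalence between $\mathcal{F}$ and its subquotient $\mathcal{F}_A$ --- this is the homological-perturbation argument (equivalently, Forman-style cancellation of matched free pairs, or Sk\"oldberg's algebraic Morse theory) --- and verifying at each cancellation step that the comparison maps preserve $\mathbb{Z}^N$-degree precisely because a matched pair shares an $\lcm$. Since $\mathcal{F}$ resolves $R/I$, transport of the quasi-isomorphism shows $\mathcal{F}_A$ does too.

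Finally, minimality is a short algebraic check once the formula for $\partial_r^A$ is in hand: every entry is a $\Bbbk$-linear combination of monomials of the form $\lcm(\sigma)/\lcm(\sigma'')$ with $\sigma,\sigma''$ critical, $|\sigma|=|\sigma''|+1$, and $\lcm(\sigma'')\mid\lcm(\sigma)$ (since $\lcm$ does not increase along the gradient path joining them). Such a monomial is a unit if and only if $\lcm(\sigma)=\lcm(\sigma'')$; so if no two $A$-critical cells of consecutive cardinality share an $\lcm$, every entry of every differential lies in $\mm=(x_1,\dots,x_N)$, hence $\mathcal{F}_A\otimes_R\Bbbk$ has zero differentials and $\mathcal{F}_A$ is a minimal free resolution.
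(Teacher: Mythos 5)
This is a theorem the paper does not prove but cites directly from Batzies–Welker \cite[Prop.\ 2.2, Prop.\ 3.1, Lem.\ 7.7]{BW02}, so there is no ``paper's own proof'' to compare against; your sketch is a faithful outline of the standard algebraic discrete Morse theory argument given in that source, including the identification of the Taylor complex with a labeled regular CW complex, the homotopy-equivalent Morse complex obtained from the acyclic matching, the gradient-path description of the differential, and the homological-perturbation (or Forman/Sk\"oldberg cancellation) argument for exactness in the $\mathbb{Z}^N$-graded free setting. Your minimality argument is also correct and is exactly the short observation one makes after the differential formula is in hand: homogeneity forces $\lcm$ to be non-increasing along gradient paths, so every nonzero entry of $\partial^A$ is a monomial $\lcm(\sigma)/\lcm(\sigma'')$ with $\lcm(\sigma'')\mid\lcm(\sigma)$, and the hypothesis that critical cells of consecutive cardinality never share an $\lcm$ forces every such entry into the irrelevant maximal ideal.
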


\subsection{Barile-Macchia matchings and Barile-Macchia resolutions.}  In this subsection, we recall Barile-Macchia matchings and resolutions. To ensure that this paper is self-contained and accessible, we present relevant definitions and results from \cite{chau2022barile} that are instrumental to our discussions.

Given  \cref{thm:morseres}, the key to producing a Morse resolution of  $R/I$ is finding a homogeneous acyclic matching of $I$. However, systematically crafting such a matching for any monomial ideal is a challenging task.  A recent development in this direction is the Barile-Macchia algorithm,  as presented in \cite[Theorem 2.8]{chau2022barile}, which produces a homogeneous acyclic  matching.  In the context of \cite{chau2022barile}, a matching arising from this algorithm is called a \textit{Barile-Macchia matching}, and the resulting resolution is called a \textit{Barile-Macchia resolution}.

Below, we recall some of the terminology useful for discussing Barile-Macchia resolutions. Throughout this section, we fix a total order \( (\succ_I) \) on \( \G(I) \). For simplicity, we write \( S \setminus s \) (resp, \( S \cup s \)) instead of \( S \setminus \{ s \} \) (resp, \( S \cup \{ s \} \)) where \( S \) is a set and \( s \in S \) (resp, \( s \notin S \)).

Additionally, throughout our discussion, we use the terms ``subsets of \( \G(I) \)" and ``cells of \( I \)" interchangeably. By cells of $I$, we refer to cells of the corresponding Taylor simplex $X$. Lastly, when we express a cell $\sigma$ as  $\sigma=\{m_{i_1}, \ldots, m_{i_k}\}$, we assume that $m_{i_1} \succ_I m_{i_2} \succ_I \cdots \succ_I m_{i_k}$.

\begin{definition}
Let $\sigma$ be a subset of $\G(I)$. A monomial $m$ is called a \textbf{bridge} of $\sigma$ if $m\in \sigma$ and removing $m$ from $\sigma$ does not change the $\lcm$, i.e., $\lcm (\sigma \setminus m) = \lcm (\sigma)$. 
\end{definition}

\begin{notation}
    If $\sigma$ has a bridge, the notation $\sbridge(\sigma)$ denotes the smallest bridge of $\sigma$ with respect to $(\succ_I)$. We set $\sbridge(\sigma)=\emptyset$ if $\sigma$ has no bridges.
\end{notation}

To fully understand the terms and context we discuss, in Algorithm 1 we recall the Barile-Macchia algorithm as outlined in \cite[Algorithm 2.9]{chau2022barile}. The Barile-Macchia Algorithm systematically constructs a matching in $G_X$ based on the concept of bridges within subsets on \(\G(I)\) and a fixed total order \( (\succ_I)\) of $\G(I)$. As it was shown in \cite[Theorem 2.11]{chau2022barile}, this process always produces a homogeneous acyclic matching.

\begin{algorithm}
\caption{Barile-Macchia Algorithm}\label{algorithm1}
\textbf{Input:} A total order $(\succ_I)$ on $\G(I)$. \\
\textbf{Output:} Set of directed edges $A$ in $G_X$.
\begin{algorithmic}[1]
\State $A \leftarrow \emptyset$
\State $\Omega \leftarrow \{\text{all subsets of } \G(I) \text{ with cardinality at least } 3\}$
\While{$\Omega \neq \emptyset$}
    \State Pick $\sigma \in \Omega$ with maximal cardinality
    \State Remove $\{\sigma, \sigma \setminus \sbridge(\sigma)\}$ from $\Omega$
    \If{$\sbridge(\sigma) \neq \emptyset$}
        \State Add edge $(\sigma , \sigma \setminus \sbridge(\sigma))$ to $A$
    \EndIf
\EndWhile
\ForAll{distinct edges $(\sigma, \tau)$ and $(\sigma', \tau')$ in $A$ with $\tau = \tau'$}
    \If{$\sbridge(\sigma') \succ_I \sbridge(\sigma)$}
        \State Remove $(\sigma', \tau')$ from $A$
    \Else
        \State Remove $(\sigma, \tau)$ from $A$
    \EndIf
\EndFor
\Return $A$
\end{algorithmic}
\end{algorithm}

\begin{definition}
Given a Barile-Macchia matching \( A \) of \( I \):
\begin{enumerate}
    \item For any edge $(\sigma,\tau)$ in the final \( A \) from Algorithm \ref{algorithm1} , the cell $\sigma$ is called \textbf{type-2} while the cell $\tau$ is called \textbf{type-1}.
    \item During the execution of Algorithm \ref{algorithm1}, if a directed edge \((\sigma,\tau)\) is added to \( A \), the cell \( \sigma \) is called \textbf{potentially-type-2}. It is important to note that this edge may not persist in the final \( A \) produced by Algorithm \ref{algorithm1}.
\end{enumerate} 
\end{definition}

\begin{remark}\label{rem:sb_pot_type_2}
  If a subset of $\G(I)$ has a bridge, then it is either type-1 or potentially-type-2.
\end{remark}

In earlier discussions, we emphasized the one-to-one correspondence between the ranks of the free \( R \)-modules in a Barile-Macchia resolution of $R/I$ and the \( A \)-critical cells of \( I \). Here, \( A \) represents the Barile-Macchia matching of \( I \) with respect to \( (\succ_I) \). Thus, delving into the nature of \( A \)-critical cells, along with the remaining cells, is crucial for a deeper comprehension of the Barile-Macchia resolution of \( R/I \). It is important to point out that critical cells of \( I \) consists of those cells either left out of \( A \) during the Barile-Macchia Algorithm or added initially but later excluded in the final refinement of the algorithm. We name these critical cells as follows:

\begin{definition}\label{def:criticals}
   Let $\sigma$ be a subset of $\G(I)$. If $\sigma$ is never added to $A$ in any of the steps throughout Algorithm \ref{algorithm1}, it is called  \textbf{absolutely critical}. If $\sigma$ is potentially-type-2 but not type-2 (initially added to $A$ but removed in the final $A$ produced by Algorithm \ref{algorithm1}), we call it {\bf fortunately critical}.
\end{definition}

The following concepts from \cite{chau2022barile} are useful in characterizing critical and non-critical cells of $I$.

\begin{definition}\cite[Definition 2.19]{chau2022barile}
Let $m, m' \in \G(I)$ and $\sigma$ be a subset of $\G(I)$.

\begin{enumerate}
    \item We say that $m$ \textbf{dominates} $m'$ if and only if $m \succ_I m'$.    
    \item The monomial $m$ is called a \textbf{gap} of $\sigma$ if $m \notin \sigma$, and $\lcm(\sigma \cup m) = \lcm(\sigma)$.
    \item The monomial $m$ is called a \textbf{true gap} of $\sigma$ if it is a gap of $\sigma$ and if $\sigma \cup m$ has no new bridges dominated by $m$. The last condition is equivalent to the following: if  $m'$ is a bridge of $\sigma \cup m$ such that $m \succ_I m'$, then $m'$ is  a bridge of $\sigma$. 
\end{enumerate}
\end{definition}

In \cite{chau2022barile}, the first two authors characterized type-1, potentially-type-2, and type-2 cells in terms of bridges and true gaps. We focus primarily on the characterization of potentially-type-2 cells as this is the type of cell that we encounter in our proofs. Recall that, if $\sigma$ is potentially-type-2, then it follows from the definition of potentially-type-2 that $\sigma$ has at least one bridge.

\begin{theorem}\cite[Theorem 2.24 (b)]{chau2022barile}\label{rem:pot-type-2}
A cell $\sigma$ is potentially-type-2 if and only if $\min_{\succ_I } (\sigma)$ is a bridge where
$$\min_{\succ_I }(\sigma)=\min_{\succ_I } \{ \text{bridges of } \sigma, \text{ true gaps of } \sigma\}.$$ In this case, we have $\min_{\succ_I } (\sigma)=\sbridge(\sigma)$.
\end{theorem}

Based on the characterizations of type-1 and potentially-type-2 cells from \cite{chau2022barile}, we have the following characterization of absolutely critical cells which is also the statement of \cite[Corollary 2.28]{chau2022barile}:
 
\begin{corollary}\label{cor:abs_critical}
     A cell is absolutely critical if and only if it has no bridges and no true gaps.
\end{corollary}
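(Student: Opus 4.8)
The plan is to deduce \cref{cor:abs_critical} directly from the characterizations of type-1 and potentially-type-2 cells recalled just above, together with \cref{rem:sb_pot_type_2}. Recall from \cref{def:criticals} that a cell $\sigma$ is absolutely critical precisely when it is never added to $A$ at any stage of \cref{algorithm1}; by \cref{rem:sb_pot_type_2}, if $\sigma$ has a bridge then $\sigma$ is either type-1 or potentially-type-2, and in either case $\sigma$ gets involved in an edge of $A$ at some point of the algorithm (a type-1 cell is the target $\tau$ of some added edge $(\sigma',\tau)$, and a potentially-type-2 cell is by definition added to $A$). So the first step is to observe that $\sigma$ absolutely critical forces $\sigma$ to have no bridge.

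Second, I would handle true gaps. Assume $\sigma$ has no bridge but does have a true gap $m$. The idea is to promote $m$ into $\sigma$: consider $\tau = \sigma \cup m$. Since $m$ is a gap, $\lcm(\tau)=\lcm(\sigma)$, so $m$ is a bridge of $\tau$; hence $\tau$ has a bridge, and by \cref{rem:sb_pot_type_2} it is type-1 or potentially-type-2. I want to argue that the presence of the true gap $m$ forces $\sigma$ itself to be touched by $A$. The cleanest route is to invoke the true-gap half of \cite[Theorem 2.24]{chau2022barile} (of which \cref{rem:pot-type-2} is the potentially-type-2 part): a cell with no bridge is absolutely critical iff it has no true gap, which is exactly what is being claimed — but since I am only allowed the stated results, I would instead reconstruct the relevant direction. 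Namely: among all true gaps of $\sigma$, and iterating, one reaches a cell $\tau \supseteq \sigma$ obtained by adjoining true gaps, with $\lcm(\tau)=\lcm(\sigma)$, such that when $\tau$ is processed by the while-loop (or a super-cell of it is), $\sbridge(\tau)$ is a monomial dominated by $m$; by the true-gap condition $\sbridge(\tau)$ is then a bridge of $\sigma$ too — contradicting that $\sigma$ has no bridge, unless in fact $\sigma = \tau\setminus\sbridge(\tau)$ is the type-1 partner of $\tau$. Either way $\sigma$ is added to $A$, so it is not absolutely critical.

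For the converse, suppose $\sigma$ has neither a bridge nor a true gap. Then $\sigma$ cannot be potentially-type-2: by \cref{rem:pot-type-2} a potentially-type-2 cell has a well-defined $\sbridge(\sigma)\neq\emptyset$, i.e.\ it has a bridge. And $\sigma$ cannot be type-1: if $(\sigma',\sigma)\in A$ then $\sigma = \sigma'\setminus \sbridge(\sigma')$ with $\lcm(\sigma')=\lcm(\sigma\cup \sbridge(\sigma'))=\lcm(\sigma)$, so $m:=\sbridge(\sigma')$ is a gap of $\sigma$; moreover the construction of the Barile--Macchia edge together with the refinement step forces $m$ to be a true gap of $\sigma$ (every bridge $m'$ of $\sigma\cup m = \sigma'$ with $m\succ_I m'$ would have been the one removed, so by the algorithm's choice of the smallest bridge such $m'$ is a bridge of $\sigma$) — contradicting that $\sigma$ has no true gap. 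Since $\sigma$ is neither type-1 nor potentially-type-2, it is never added to $A$, hence absolutely critical.

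I expect the main obstacle to be the first (forward) direction: turning the existence of a single true gap into a concrete stage of \cref{algorithm1} at which $\sigma$ is touched, since a priori a true gap of $\sigma$ only tells us about bridges of $\sigma\cup m$, not directly about what the algorithm does to $\sigma$. The resolution is to lean on the already-recalled \cite[Theorem 2.24]{chau2022barile} / \cref{rem:pot-type-2} machinery rather than re-deriving it: once one knows the exact type-1 and potentially-type-2 characterizations in terms of bridges and true gaps, \cref{cor:abs_critical} is the purely logical statement that ``not type-1 and not potentially-type-2'' $\iff$ ``no bridge and no true gap'', and the bulk of the proof is just carefully negating those two characterizations and checking the edge cases (e.g.\ $|\sigma|\le 2$, where $\sigma$ is automatically never in $\Omega$ and one checks such cells indeed have no bridge, and when they have no true gap either).
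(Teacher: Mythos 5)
Your overall strategy matches the paper's: the paper offers no explicit proof of \cref{cor:abs_critical}, stating only that it is "based on the characterizations of type-1 and potentially-type-2 cells in [chau2022barile, Theorem 2.24]," so a proof is indeed a matter of negating those characterizations. Three of your four sub-directions are clean and correct. In particular, the converse ("no bridge and no true gap $\Rightarrow$ absolutely critical") is handled well: ruling out potentially-type-2 via the existence of a bridge, and ruling out being a target of an added edge $(\sigma',\sigma)$ because $m:=\sbridge(\sigma')$ is the \emph{smallest} bridge of $\sigma'$, so the true-gap condition for $m$ relative to $\sigma$ is vacuous and $m$ is a true gap — contradiction. (A slightly quicker route there is to quote the already-recalled \cite[Proposition 2.21]{chau2022barile}: for a bridge-free $\sigma$, $m$ is a true gap of $\sigma$ iff $m$ is a gap with $\sbridge(\sigma\cup m)=m$, which is literally what $(\sigma',\sigma)\in A$ asserts.)

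Where you hit trouble, and correctly flag that you hit trouble, is the direction "absolutely critical $\Rightarrow$ no true gap." Your attempted reconstruction ("among all true gaps of $\sigma$, and iterating, one reaches a cell $\tau\supseteq\sigma$ ... such that $\sbridge(\tau)$ is dominated by $m$; by the true-gap condition $\sbridge(\tau)$ is a bridge of $\sigma$ too") does not close: the iteration is unspecified, there is no reason $\sbridge(\tau)$ should be dominated by the originally chosen $m$, and the true-gap condition for $m$ constrains bridges of $\sigma\cup m$, not of the larger $\tau$. The real difficulty, as you essentially observe, is that after adjoining the true gap $m$ to get $\sigma\cup m$ with $\sbridge(\sigma\cup m)=m$, the cell $\sigma\cup m$ may already have been removed from $\Omega$ as the type-1 partner of a larger cell, so one cannot simply conclude that $(\sigma\cup m,\sigma)$ is ever added. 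Closing that gap is precisely the content of the type-1 characterization in \cite[Theorem 2.24(a)]{chau2022barile}, which the paper implicitly invokes and which you are entitled to invoke here as well (the paper explicitly recalls Theorem 2.24 in the sentence immediately preceding the corollary). Replacing your ad hoc iteration with a citation of that part of Theorem 2.24 gives a complete proof aligned with the paper's intent.
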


In \cite{chau2022barile}, the first two authors introduced a pivotal class of ideals called ``bridge-friendly" for analyzing the minimality of Barile-Macchia resolutions. As established in \cite[Theorem 2.29]{chau2022barile}, bridge-friendliness is a sufficient condition for an ideal to have a minimal Barile-Macchia resolution. We revisit the definition of this class of ideals using the concept of absolutely critical cells.

\begin{definition}\cite[Definition 2.27]{chau2022barile}
   A monomial ideal \(I\) is called \textbf{bridge-friendly} if, for some total order \(\succ_I\) on \(\G(I)\), every potentially-type-2 cell is of type-2 with respect to ($\succ_I$). Equivalently, all \(A\)-critical cells of \(I\) are absolutely critical. Here, \(A\) represents the Barile-Macchia matching of \(I\) with respect to ($\succ_I$).
\end{definition}

\begin{theorem}\cite[Theorem 2.29]{chau2022barile}\label{thm:friendly_min}
  If $I$ is bridge-friendly, then $R/I$ has a minimal Barile-Macchia resolution.   
\end{theorem}

In the next chapter, we study class of ideals that are bridge-friendly.

\section{Minimal free resolutions of path ideals of paths}\label{sec:min_path}

In this section, our primary goal is to investigate the bridge-friendliness and, consequently, the minimal Barile-Macchia resolutions of path ideals of paths.

Fix two integers $p$ and $n$. Consider a path \( L \) on the vertices \(\{x_1,\ldots, x_{n+p-1}\}\). Let \( R = \Bbbk [x_1,\ldots, x_N] \)  with \( N = n + p - 1 \). The \textbf{$p$-path ideal} of \( L \), denoted as \( I_p(L_{N}) \), is generated by monomials in \( R \) corresponding to paths on  \( p \) vertices along \( L \). Explicitly, we have:
\[ I_p(L_{N}) = (x_1x_2\cdots x_p,\ x_2x_3\cdots x_{p+1},\ \cdots, \ x_nx_{n+1}\cdots x_{n+p-1}). \]
\begin{remark}
    Path ideals can be viewed as an extension of edge ideals of graphs. Specifically, the \(2\)-path ideal of a graph coincides with its edge ideal.
\end{remark}

The set of minimal generators of the \(p\)-path ideal of \(L\) is $\{ m_1, m_2, \ldots, m_n \}$ and we denote this set by $\G$. So, 
\[
\G = \{ m_1, m_2, \ldots, m_n \}
\]
where \(m_i := x_i x_{i+1} \cdots x_{i+p-1}\) for each \(1 \leq i \leq n\). Fix a total order \((\succ)\) on \(\G\) such that 
\[
m_1 \succ m_2 \succ \cdots \succ m_{n}.
\]
Throughout the rest of this chapter, our focus is  on the monomial ideal \( I_p(L_{N}) \). In particular, we  examine its Barile-Macchia matching and resolution with respect to the aforementioned total order. For ease of readability, we introduce the following notation, which is consistently employed throughout the paper.

\begin{notation}\label{not:Ms}
  Fix a variable $x_i$ where \(1 \leq i \leq N\). Let \(M_i\) denote the set of all monomials in $\G$ that are divisible by  \(x_i\) where  
  $$M_i=\{m_j :~ \max \{1, i-p+1\} \leq j\leq \min \{i,n\}\}.$$
  This means
  $$M_i = \begin{cases}
      \{m_1,\ldots, m_i\} &\text{ if } i<p, \\
      \{m_{i-p+1}, \ldots, m_{i-1}, m_i\} &\text{ if } p \leq i\leq n,\\
       \{m_{i-p+1}, \ldots,  m_n\} &\text{ if } i>n.
  \end{cases}$$
\end{notation}

\begin{notation}\label{not:distance}
For a monomial $m\in R$, we denote its support by $\supp(m)$. Recall that this is the set of all variables  dividing $m$. For instance,  we have $\supp(m_i)=\{x_i,\ldots, x_{i+p-1}\}$ for  $m_i \in \G$.
\end{notation}

We begin our analysis with the following lemma, which serves as a foundational tool for the classification of bridges, gaps, and true gaps of a cell.

\begin{lemma}\label{lem:tiny}
Let \( \sigma \) be a cell of \( I_p(L_N) \) such that \( m_i \notin \sigma \) for some \( m_i \in \G \). Assume that \( \lcm (\sigma) \) is divisible by \( m_i \). Then there exist monomials \( m_j \in \sigma \cap M_i\)  and \( m_k \in \sigma \cap M_{i+p-1}\) with $1\leq j<i<k\leq n$ such that  \( k - j \leq  p \). 
\end{lemma}

\begin{proof}
   First note that $1<i<n$ since  \( \lcm (\sigma) \) is divisible by \( m_i \) but \( m_i \notin \sigma \). Consider the set of all monomials in $\G$ that are divisible by one of the variables in $\supp (m_i)$. This set can be expressed as the union  $M_i \cup M_{i+p-1}$ where
   $$M_i= \{m_{q},\ldots, m_i\} \text{ and } M_{i+p-1} = \{m_i,\ldots, m_{\ell} \} $$
with $q=\max\{1,i-p+1\}$ and $\ell= \min\{i+p-1,n\}$
as in \cref{not:Ms}. Then, for any  $m_s\in M_i$ and  $m_t\in M_{i+p-1}$, we have $s \leq i\leq t \leq i+p-1$.  Lastly, note that  if $m_s \in M_i$, then  $m_{s+p} \notin M_i$ which implies that $i< s+p$.

   Since  $m_i$ divides $\lcm (\sigma)$ but $m_i \notin \sigma$,  there exist monomials $m_s \in \sigma \cap M_i $ and $m_t \in \sigma \cap M_{i+p-1} $ with $s<i<t$. Let $j$ be the largest index of a monomial in $\sigma \cap M_i$  and $k$ be the smallest index of monomial in $\sigma \cap M_{i+p-1} $. In other words, pick the monomials $m_j$ and $m_k$ in $\sigma$ that are closer to $m_i$ from either direction. Observe that   $m_i$ divides $\lcm(m_j, m_k)$. Otherwise, there exists $m_s\in \sigma \cap M_i$ with $j<s$ or $m_t\in \sigma\cap M_{i+p-1}$ with $t<k$  which contradicts to either maximality of $j$ or minimality of $k$.   
   
   Lastly, we show $k-j\leq p$. If $k -j > p$, then $x_{j+p}$ does not divide $\lcm(m_j, m_k)$.  However, $x_{j+p}$  divides $m_i$ since $i+1\leq  j+p<k\leq i+p-1$ from the first paragraph. This leads to a contradiction as $m_i$ divides $\lcm(m_j, m_k)$. Thus,  we must have $k - j \leq p$.
\end{proof}

In the following result, we provide a characterization of bridges and gaps of a cell. It is important to note that neither \( m_1 \) nor \( m_{n} \) can be a gap or a bridge of any cell.

\begin{proposition}\label{prop:bridge_gap}
Let \( \sigma \) be a cell of \( I_p(L_N) \). For a monomial \( m_i \in \G \) with \( 1 < i < n \), the following statements hold:
    \begin{enumerate}
        \item monomial \( m_i \) is a bridge of \( \sigma \) if and only if \( m_i \in \sigma \) and there exist monomials \( m_j \in \sigma \cap M_i \) and \( m_k \in \sigma \cap M_{i+p-1} \) such that $j<i<k\leq n$ and \( k - j \leq  p \).
        \item monomial \( m_i \) is a gap of \( \sigma \) if and only if \( m_i \notin \sigma \) and there exist monomials \( m_j \in \sigma \cap M_i \) and \( m_k \in \sigma \cap M_{i+p-1} \) such that  $j<i<k\leq n$ and \( k - j \leq  p \). 
    \end{enumerate}
\end{proposition}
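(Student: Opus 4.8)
The plan is to prove both statements by a direct application of \cref{lem:tiny}, together with the elementary observation that $m_i \mid \lcm(\sigma')$ if and only if $\lcm(\sigma' \cup m_i) = \lcm(\sigma')$. I will treat the gap statement (2) first, since it is the cleaner of the two, and then deduce the bridge statement (1) by applying (2) to $\sigma \setminus m_i$.

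For part (2): suppose $m_i \notin \sigma$. By definition, $m_i$ is a gap of $\sigma$ precisely when $\lcm(\sigma \cup m_i) = \lcm(\sigma)$, i.e. when $m_i \mid \lcm(\sigma)$. For the forward direction, assuming $m_i \mid \lcm(\sigma)$, \cref{lem:tiny} applies verbatim and yields $m_j \in \sigma \cap M_i$, $m_k \in \sigma \cap M_{i+p-1}$ with $k - j \le p$. For the reverse direction, if such $m_j, m_k$ exist then every variable dividing $m_i = x_i \cdots x_{i+p-1}$ divides $\lcm(m_j, m_k)$: indeed $m_j \in M_i$ means $i \le j + p - 1$ so $m_j = x_j \cdots x_{j+p-1}$ covers $x_i, \dots, x_{j+p-1}$, and $m_k \in M_{i+p-1}$ means $k \le i + p - 1$ so $m_k = x_k \cdots x_{k+p-1}$ covers $x_k, \dots, x_{i+p-1}$; the condition $k - j \le p$ (equivalently $k \le j + p$, i.e. $k \le (j+p-1)+1$) guarantees these two ranges overlap or are adjacent, hence their union is all of $\{i, \dots, i+p-1\}$. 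So $m_i \mid \lcm(\sigma)$, i.e. $m_i$ is a gap. For the "in particular" clause $j < i < k$: since $m_i \notin \sigma$ we cannot have $j = i$ or $k = i$; and $m_j \in M_i$ forces $j \le i$ while $m_k \in M_{i+p-1}$ forces $i \le i + p - 1$ hence... more carefully, $m_k \in M_{i+p-1}$ means $x_{i+p-1} \mid m_k$, i.e. $k \le i+p-1 \le k+p-1$, so $k \le i + p - 1$ and $k \ge i$; combined with $k \ne i$ this gives $k > i$. Symmetrically $j \le i$ and $j \ne i$ give $j < i$.

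For part (1): $m_i$ is a bridge of $\sigma$ iff $m_i \in \sigma$ and $\lcm(\sigma \setminus m_i) = \lcm(\sigma)$, i.e. iff $m_i \in \sigma$ and $m_i \mid \lcm(\sigma \setminus m_i)$, which is exactly the statement that $m_i$ is a gap of $\sigma \setminus m_i$. Applying part (2) to the cell $\sigma' := \sigma \setminus m_i$ (noting $m_i \notin \sigma'$), $m_i$ is a gap of $\sigma'$ iff there exist $m_j \in \sigma' \cap M_i = (\sigma \setminus m_i) \cap M_i$ and $m_k \in \sigma' \cap M_{i+p-1} = (\sigma \setminus m_i) \cap M_{i+p-1}$ with $k - j \le p$, and by the "in particular" clause such $j, k$ automatically satisfy $j < i < k$; since $j \ne i \ne k$, intersecting with $\sigma \setminus m_i$ versus $\sigma$ makes no difference, so this is equivalent to the existence of $m_j \in \sigma \cap M_i$, $m_k \in \sigma \cap M_{i+p-1}$ with $j < i < k$ and $k - j \le p$, as claimed.

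I do not expect any genuine obstacle here: the content is entirely packaged in \cref{lem:tiny}, and what remains is the bookkeeping around the intervals $M_i$ and the reverse (sufficiency) direction, which \cref{lem:tiny} does not itself supply. The one point that needs a little care is making the index arithmetic in the reverse direction of (2) precise — in particular that $k - j \le p$ is the exact threshold for the variable-ranges of $m_j$ and $m_k$ to cover $\{i, \ldots, i+p-1\}$ without a gap — and confirming that the restriction $1 < i < n$ (which guarantees $M_i$ and $M_{i+p-1}$ are both relevant and that $m_i$ genuinely has neighbors on both sides) is what rules out the boundary generators $m_1, m_n$ from ever being bridges or gaps.
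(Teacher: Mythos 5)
Your proof is correct and follows the same route the paper takes, namely deducing the characterization directly from \cref{lem:tiny} (the paper gives no written proof beyond asserting this derivation). You correctly note that \cref{lem:tiny} only supplies the forward direction and fill in the reverse (sufficiency) direction with the interval bookkeeping, and your reduction of the bridge case (1) to the gap case (2) via $\sigma \setminus m_i$ is the natural and standard move.
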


\begin{proof}
    If $m_i$ is a bridge of $\sigma$, then application of \cref{lem:tiny} to $\sigma \setminus m_i$ results with the desired conditions. For the other direction, if the conditions are met, then  $m_i$ divides $\lcm(m_j,m_k)$. Since $\lcm(m_j,m_k)$ divides $\lcm (\sigma)$, monomial $m_i$ must be a bridge of $\sigma$. Characterization of gaps in part (2) follows similarly.
\end{proof}

\begin{remark}\label{rem:closest_j_k}
  When $m_i$ is a bridge or a gap of a cell $\sigma$, there can be several $(m_j,m_k)$ pairs of $m_i$ where $m_j$ and $m_k$ are as in the statement of \cref{prop:bridge_gap}. However, there is only one $(m_j,m_k)$ pair if  $m_j$ and $m_k$ are chosen to be the closest to $m_i$ as in the proof of \cref{lem:tiny}. 
\end{remark}

Next, we present a characterization of true gaps in terms of  these $(m_j,m_k)$ pairs.  In particular,  we show that $m_k=m_{k'}$  for any two pairs $(m_j,m_k)$ and $(m_{j'}, m_{k'})$ of a true gap as in \cref{prop:bridge_gap}. So,  there is a unique such $m_k$ for true gaps.

\begin{proposition}\label{prop:path_truegap}
     Let \( \sigma \) be  a cell of \( I_p(L_N) \). Consider a monomial \( m_i \in \G \) for $1<i<n$ such that  $m_i$ does not dominate any bridges of $\sigma$. Then  $m_i$ is a true gap of $\sigma$ if and only if the following statements hold:
        \begin{enumerate}[label=(\arabic*)]
            \item[(a)] Monomial $m_i$ is a gap of $\sigma$. 
            \item[(b)] There is only one monomial in $\sigma \cap M_{i+p-1}$. 
            \item[(c)]  If $i+p\leq n$, then $m_{i+p}\notin \sigma$.
        \end{enumerate}      
\end{proposition}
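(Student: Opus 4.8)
The plan is to prove both implications by unwinding the definition of a true gap in terms of the combinatorial data supplied by \cref{prop:bridge_gap}. Throughout, assume $m_i$ is a gap of $\sigma$ (condition (a)), since otherwise $m_i$ is certainly not a true gap and, conversely, (a) is explicitly required; by \cref{prop:bridge_gap}(2) we may fix $m_j \in \sigma \cap M_i$ and $m_k \in \sigma \cap M_{i+p-1}$ with $j < i < k$ and $k - j \le p$, chosen closest to $m_i$ as in \cref{rem:closest_j_k}. Write $\tau = \sigma \cup m_i$; note $\lcm(\tau) = \lcm(\sigma)$. By definition, $m_i$ is a true gap of $\sigma$ exactly when every bridge $m'$ of $\tau$ with $m_i \succ m'$ (i.e.\ with index larger than $i$) is also a bridge of $\sigma$. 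The hypothesis that $m_i$ dominates no bridge of $\sigma$ will be used to rule out the ``symmetric'' obstructions coming from indices below $i$.

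\textbf{First I would} analyze which monomials can be bridges of $\tau$ with index $> i$. Suppose $m_\ell$ is such a bridge, $\ell > i$, $m_\ell \in \tau$, so $m_\ell \in \sigma$. Applying \cref{prop:bridge_gap}(1) to $\tau$ and $m_\ell$: there are $m_a \in \tau \cap M_\ell$, $m_b \in \tau \cap M_{\ell+p-1}$ with $a < \ell < b$ and $b - a \le p$. The key case split is whether $m_i$ is \emph{needed} as one of these witnesses. If neither $a$ nor $b$ equals $i$, then $m_a, m_b \in \sigma$, so $m_\ell$ is already a bridge of $\sigma$ and there is nothing to check. So the only way $m_\ell$ can fail to be a bridge of $\sigma$ is if $m_i$ is the unique left-witness $m_a$ (forcing $a = i$, hence $\ell \in \{i+1,\dots,i+p-1\}$ and the left witness in $M_\ell$ is exactly $m_i$) or the unique right-witness $m_b$ (forcing $b = i$, hence $\ell \in \{i-p+1,\dots,i-1\}$, but this contradicts $\ell > i$, so this subcase is vacuous). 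Thus \emph{the obstruction to $m_i$ being a true gap is precisely the existence of some $m_\ell \in \sigma$ with $i < \ell \le i+p-1$ that is a bridge of $\tau$ using $m_i$ as its (only available) element of $M_\ell$, yet is not a bridge of $\sigma$.} Making this reduction precise and correct is the main obstacle: one must carefully track when a witness in $\sigma$ itself already exists — for instance using \cref{rem:closest_j_k}, the closest element of $\sigma$ to $m_\ell$ on the appropriate side — and confirm that ``$m_i$ is the unique element of $\sigma \cap M_\ell$ with index $< \ell$'' combined with ``$m_\ell$ fails to be a bridge of $\sigma$'' translates exactly into violations of (b) or (c).

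\textbf{Next I would} show (b) and (c) together are equivalent to the absence of such an obstructing $m_\ell$. For the direction that the conditions are necessary: if (b) fails, there is $m_{k'} \in \sigma \cap M_{i+p-1}$ with $k' \ne k$; analyze the index $k'$ — it lies in the window $\{i, \dots, i+p-1\}$, and one checks that (after possibly re-choosing to the closest such element) $m_{k'}$ with $k' > i$ becomes a bridge of $\tau$ via the witnesses $m_i \in M_{k'}$ and some $m_{b} \in \sigma \cap M_{k'+p-1}$ with $b - i \le p$, while failing to be a bridge of $\sigma$ precisely because removing $m_i$ there is no replacement left-witness in $\sigma \cap M_{k'}$ at distance $\le p$ from $m_b$ — here is where $m_i \notin \sigma$ and the uniqueness matter, and where one invokes the hypothesis that $m_i$ dominates no bridge of $\sigma$ to keep the analysis on the $> i$ side. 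Similarly, if (c) fails, $m_{i+p} \in \sigma$, then $m_{i+p-1}$ (or whichever index in the window is forced) becomes a bridge of $\tau$ but not of $\sigma$ by the same mechanism. Conversely, assuming (a), (b), (c), I would take an arbitrary bridge $m_\ell$ of $\tau$ with $\ell > i$ and run the case analysis from the first paragraph: the only dangerous case is $\ell \in \{i+1, \dots, i+p-1\}$ with $m_i$ as left-witness, and then (b) gives that $m_k$ is the unique element of $\sigma \cap M_{i+p-1}$, while (c) controls $m_{i+p}$; a short distance computation using $k - j \le p$ and $k \le i + p - 1$ then produces a genuine witness pair for $m_\ell$ inside $\sigma$ (namely $m_j$ or $m_k$ on the appropriate side, or forces a contradiction), showing $m_\ell$ is a bridge of $\sigma$ after all. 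Hence $m_i$ is a true gap.

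\textbf{I expect the main obstacle} to be the bookkeeping in the ``only dangerous case'': correctly identifying, for each $\ell$ in the length-$(p-1)$ window above $i$, which elements of $\sigma$ can serve as the right-witness in $M_{\ell + p - 1}$, and verifying that conditions (b) and (c) are exactly what is needed — no more, no less — to guarantee a surviving witness in $\sigma$. The interval arithmetic ($a < \ell < b$, $b - a \le p$, intersected with the constraints $m_a \in M_\ell$, $m_b \in M_{\ell+p-1}$, and the window $\ell \le i+p-1$) is elementary but unforgiving, and the role of the standing hypothesis ``$m_i$ dominates no bridge of $\sigma$'' must be pinned down precisely (it is what lets us ignore bridges of $\tau$ with index $\le i$ entirely, so that ``true gap'' reduces to the one-sided condition above). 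Once that reduction is clean, the equivalence with (b) and (c) follows from the distance bookkeeping without further ideas.
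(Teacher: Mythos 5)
Your proposal follows essentially the same route as the paper: unwind the definition of a true gap using the bridge characterization of \cref{prop:bridge_gap}, reduce to analyzing bridges of $\tau=\sigma\cup m_i$ with index $>i$ in which $m_i$ must serve as the left witness, and show that (b) and (c) are exactly what rules these out. The paper streamlines this by invoking the reformulation from \cite[Proposition~2.21]{chau2022barile} — namely that, under the standing hypothesis, $m_i$ is a true gap of $\sigma$ iff $m_i$ is a gap and $\sbridge(\sigma\cup m_i)=m_i$ — so that the forward direction becomes ``no element of $\tau$ other than $m_i$ can be a smaller bridge,'' but the content is the same case analysis you outline.

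Two small points to tighten. First, when (b) fails with $m_k,m_{k'}\in\sigma\cap M_{i+p-1}$ and $k<k'$, the cell that becomes a bridge of $\tau$ is the \emph{smaller} one $m_k$ (with witnesses $m_i$ on the left and $m_{k'}$ on the right), not $m_{k'}$; your parenthetical ``after possibly re-choosing to the closest such element'' gestures at this, but the argument should be run with $m_k$ from the start, since there may be no valid right witness for $m_{k'}$ at all. Second, the stated role of the hypothesis ``$m_i$ dominates no bridge of $\sigma$'' is slightly off: the definition of true gap already restricts attention to bridges of index $>i$, so the hypothesis is not what lets you ignore the low-index bridges. Rather, it guarantees that any bridge of $\tau$ with index $>i$ automatically fails to be a bridge of $\sigma$, which is what turns the true-gap condition into ``$\tau$ has \emph{no} bridge of index $>i$ at all'' (equivalently $\sbridge(\tau)=m_i$) and makes the constructed $m_k$ an actual obstruction in the forward direction.
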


\begin{proof}
Recall the assumption that $m_i$ does not dominate any bridges of $\sigma$. In addition, recall the following result from \cite[Proposition 2.21]{chau2022barile} which will be useful in the proof:  \( m_i \) is a true gap of $\sigma$ that does not dominate any bridges of $\sigma$ if and only if  \( m_i \)  is a gap of \( \sigma \) and \( \sbridge(\sigma \cup m_i) = m_i \).

For the forward direction, suppose \( m_i \) is a true gap of \( \sigma \). It then follows from \cite[Proposition 2.21]{chau2022barile} that \( \sbridge(\sigma \cup m_i) = m_i \).

\begin{enumerate}[label=(\alph*)]
    \item By the definition of a true gap, \( m_i \) is a gap of $\sigma$. 

    \item It follows from part (a) and  \cref{prop:bridge_gap} (2) that there exists a monomial $m_k \in \sigma \cap M_{i+p-1}$  for $i<k$.     If there exists another monomial $m_t \in \sigma \cap M_{i+p-1}$, then  \( i<k , t \leq i+p-1\) where the last inequality is due to \cref{not:Ms}. We may assume $k<t$ (otherwise, switch $k$ and $t$ in the following arguments). This means $\supp(m_k) \subseteq \supp(m_i)\cup \supp(m_t)$ which is equivalent to  \( m_k \) divides $\lcm(m_i,m_t)$. Since  $\lcm(m_i,m_t)$ divides $\lcm(\sigma \cup m_i)$, monomial $m_k$ is a bridge of \( \sigma \cup m_i \) where  $m_i \succ m_k$. This contradicts \( \sbridge(\sigma \cup m_i) = m_i \). So, we have \( \sigma \cap M_{i+p-1}=\{m_k\} \).

    \item Suppose $i+p\leq n$. Let $m_k$ be the monomial from part (b). If $m_{i+p}\in \sigma$, then  $\supp(m_k) \subseteq \supp(m_i)\cup \supp(m_{i+p})$ since $i<k<i+p$. This implies that  $m_k$  is a bridge of $\sigma \cup m_i$ where $m_i \succ m_k$, a contradiction as \( \sbridge(\sigma \cup m_i) = m_i \). Hence,   $m_{i+p}\notin \sigma$.
\end{enumerate}

For the other direction, assume  (a), (b), and (c) hold. For contradiction, suppose that $m_i$ is not a true gap of $\sigma$. Let $m_t:= \sbridge(\sigma\cup m_i)$. Then, by \cite[Proposition 2.21]{chau2022barile}, we have $m_i\succ   m_t$ which implies that  $i<t$ and $m_t\in \sigma$. Although $m_t$ is a bridge of $\sigma\cup m_i$, notice that $m_t$ cannot be a bridge of $\sigma$ since $m_i$ does not dominate any bridges of $\sigma$ by the initial assumption.

Since $m_t$ is a bridge of $\sigma\cup m_i$, by \cref{prop:bridge_gap} (1),  there are monomials $m_{j_t}, m_{k_t} \in \sigma \cup m_i$ such that $j_t < t < k_t\leq n$ and $k_t-j_t \leq p$. Since $m_t$ is not a bridge of $\sigma$, either $j_t=i$ or $k_t=i$. Given \( i < t \), it follows that \( j_t = i \). Consequently, we have \( i < t < k_t \leq i+p \) which implies that \( m_t \in  \sigma  \cap M_{i+p-1} \). Since $ m_{k_t} \in \sigma$, it follows from  (b)  that  \( m_{k_t} \notin \sigma \cap M_{i+p-1} \), meaning that $ k_t\geq i+p$. This means \( k_t = i+p \leq n\), which in turn implies \( m_{k_t} = m_{i+p} \in \sigma\), contradicting (c). Therefore, \( m_i \) must indeed be a true gap of \( \sigma \). \qedhere
\end{proof}

\begin{example}\label{examplepathideal}
    Consider the $3$-path ideal of an $8$-path,
    \[I=(x_1x_2x_3,x_2x_3x_4,x_3x_4x_5,x_4x_5x_6,x_5x_6x_7,x_6x_7x_8).\]
    Consider the subset $\sigma=\{m_1,m_4,m_6\}$. It  has no bridges, and its gaps are $m_2, m_3$, and $m_5$. Using \cref{prop:path_truegap}, we identify which among these gaps are true gaps.

\begin{itemize}
    \item The monomial $m_2$ is a true gap of $\sigma$. This is confirmed by: (a) the observation that $m_2\notin \sigma$ and $m_1,m_4\in \sigma$, with the difference in their indices satisfying $4-1\leq 3$; (b) within the set $M_{4}=\{m_2,m_3,m_4\}$, only $m_4$ belongs to $\sigma$; and (c) the monomial $m_{2+3}=m_{5}$ is not in $\sigma$.
    \item The monomial $m_3$ is not a true gap of $\sigma$. This is due to the failure of part (c) of \cref{prop:path_truegap} (3), given that $m_6\in \sigma$.
    \item The monomial $m_5$ is  a true gap of $\sigma$ based on: (a) $m_5\notin \sigma$ and both $m_4$ and $m_6$ are in $\sigma$, satisfying $6-4\leq 3$; (b) from the set $M_{7}=\{m_5,m_6\}$, only $m_6$ is in $\sigma$. Furthermore, (c) is not applicable since $8>6$, and thus, $m_{5+3}=m_{8}$ does not exist.
\end{itemize}
\end{example}

In what follows, we show that  path ideals of paths are bridge-friendly. 

\begin{theorem}\label{thm:path_friendly_min}
   The path ideal $I_p(L_{N})$ is bridge-friendly, and its Barile-Macchia resolution is ~minimal.
\end{theorem}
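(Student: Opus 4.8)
The strategy is to show directly that every potentially-type-2 cell $\sigma$ of $I_p(L_N)$ is in fact type-2. By \cref{rem:pot-type-2} and the definition of bridge-friendliness, it suffices to show that whenever two distinct edges $(\sigma,\tau)$ and $(\sigma',\tau)$ are added to $A$ during Algorithm~\ref{algorithm1} (so $\tau = \sigma\setminus\sbridge(\sigma) = \sigma'\setminus\sbridge(\sigma')$), the final refinement step never has to discard one of them — equivalently, that such a collision simply cannot occur, so each potentially-type-2 cell survives. Concretely, I plan to suppose $\sigma \ne \sigma'$ both have $\sbridge$'s whose removal yields the same $\tau$, write $\sigma = \tau \cup m_a$ and $\sigma' = \tau \cup m_b$ with $m_a = \sbridge(\sigma)$, $m_b = \sbridge(\sigma')$, and derive a contradiction from the interval structure of path ideals using \cref{prop:bridge_gap} and \cref{prop:path_truegap}. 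The key is that $m_a$ is a gap of $\tau$ (since $\lcm(\tau) = \lcm(\sigma)$ and $m_a \notin \tau$) and likewise $m_b$ is a gap of $\tau$; moreover, because $m_a = \sbridge(\sigma)$ is the smallest bridge of $\sigma = \tau \cup m_a$, the gap $m_a$ of $\tau$ is actually a \emph{true gap} of $\tau$ (via \cite[Proposition 2.21]{chau2022barile}), and symmetrically $m_b$ is a true gap of $\tau$. I would then invoke \cref{prop:path_truegap}: both $m_a$ and $m_b$ satisfy conditions (a)--(c) relative to $\tau$. Condition (b) says $\tau \cap M_{a+p-1}$ is a single monomial $m_{k_a}$ with $a < k_a \le a+p$, and condition (c) forces $m_{a+p}\notin\tau$ (when it exists); the same holds with $a$ replaced by $b$. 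Combining these constraints — WLOG $a < b$ — the monomial $m_{k_a}\in\tau$ lies in $M_{a+p-1}$, and one checks that $m_{k_a}$ must also witness the "$m_k$" of condition (a) for the gap $m_b$, or else force a contradiction with condition (b) for one of the two true gaps; tracking the indices carefully pins $\tau$ down enough that one of conditions (b) or (c) fails, the contradiction.

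A cleaner route, which I would pursue first, is to exploit the observation flagged in the introduction: two distinct critical cells of a path ideal of a path have distinct least common multiples. If I can establish that no two distinct cells $\sigma,\sigma'$ obtained by adding a single monomial to a common $\tau$ can \emph{both} be potentially-type-2 with $\sbridge$ removal giving $\tau$, then the \textbf{for}-loop in Algorithm~\ref{algorithm1} never fires, so potentially-type-2 $=$ type-2 and we are done. This reduces to the index bookkeeping above. Granting bridge-friendliness, minimality of the Barile-Macchia resolution is then immediate from \cite[Theorem 2.29]{chau2022barile} (bridge-friendly ideals have minimal Barile-Macchia resolutions), or alternatively from the minimality criterion in \cref{thm:morseres}: by \cref{cor:abs_critical} every $A$-critical cell $\sigma$ has neither bridges nor true gaps, and one shows that for any two such cells with $|\sigma'| = |\sigma| - 1$ the containment $\sigma' \subsetneq \sigma$ would make the extra generator a bridge of $\sigma$ (if $\lcm(\sigma') = \lcm(\sigma)$), contradicting absolute criticality — hence $\lcm(\sigma)\ne\lcm(\sigma')$.

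The main obstacle is the index arithmetic in showing the collision is impossible: I must carefully use that $m_a,m_b$ are \emph{true} gaps of $\tau$ (not just gaps) and translate conditions (b) and (c) of \cref{prop:path_truegap} — which concern the single monomial in $\tau \cap M_{i+p-1}$ and the absence of $m_{i+p}$ — into a rigid picture of where the elements of $\tau$ can sit near indices $a$, $b$, $a+p-1$, $b+p-1$. The delicate point is the case where the intervals $[a, a+p-1]$ and $[b, b+p-1]$ overlap, since then the "$m_j$" and "$m_k$" witnesses for the two gaps interact; I expect that in every such configuration, the true-gap conditions for $m_a$ and for $m_b$ are jointly unsatisfiable, but verifying this requires splitting into subcases according to the relative positions of $b$, $a+p-1$, and $k_a$. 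I would organize the argument so that the hypothesis "$m_a$ does not dominate any bridge of $\tau$" (automatic here since $m_a = \sbridge(\tau\cup m_a)$ and $\tau$ itself, being a face of a potentially-type-2 cell, has controlled bridges) is verified up front, allowing a clean application of \cref{prop:path_truegap}.
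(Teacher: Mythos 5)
Your overall reduction is sound: bridge-friendliness of $I_p(L_N)$ is indeed equivalent to showing that the \textbf{for}-loop in Algorithm~\ref{algorithm1} never fires, i.e., no two potentially-type-2 cells $\sigma = \tau \cup m_a$ and $\sigma' = \tau \cup m_b$ (with $m_a = \sbridge(\sigma)$, $m_b = \sbridge(\sigma')$, $a \neq b$) can share the same $\tau$. Your first deductions are also correct: both $m_a$ and $m_b$ are true gaps of $\tau$ by \cite[Proposition 2.21]{chau2022barile}. This parallels the paper's proof, which invokes \cite[Lemma 2.33]{chau2022barile} for essentially the same reduction.

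However, the step you flag as ``the main obstacle''---showing that the true-gap conditions of \cref{prop:path_truegap} for $m_a$ and for $m_b$ relative to $\tau$ are jointly unsatisfiable---is not an obstacle to be overcome, it is simply false. A cell $\tau$ of $I_p(L_N)$ can have two (or more) true gaps simultaneously. The paper's own \cref{examplepathideal} already exhibits one: in $I_3(L_8)$, the cell $\tau = \{m_1, m_4, m_6\}$ has $m_2$ and $m_5$ both as true gaps, and this is a non-overlapping configuration ($[2,4]$ and $[5,7]$ are disjoint), so the overlap case you single out as delicate is not even where the difficulty lies. No amount of index bookkeeping on $\tau$ alone will produce the contradiction. (In that example there is of course no actual collision; the reason is that $\tau\cup m_2 = \{m_1,m_2,m_4,m_6\}$ is not potentially-type-2, because it has true gap $m_5$ not dominating $\sbridge = m_2$.)

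The missing ingredient is that you must bring the potentially-type-2 hypothesis on the larger cell $\sigma$ back into play, not only $\sbridge(\sigma) = m_a$. Taking WLOG $a < b$ so $m_a \succ m_b$: since $m_a$ is the smallest bridge of $\sigma$ and $m_a \succ m_b$, the monomial $m_b$ dominates no bridge of $\sigma$, so \cref{prop:path_truegap} applies to $\sigma$ and $i=b$. Conditions (a), (b), (c) at index $b$ hold for $\tau$ (since $m_b$ is a true gap of $\tau$) and transfer verbatim to $\sigma = \tau \cup m_a$, because $a < b$ places $m_a$ outside $M_{b+p-1}$ and makes $m_a \neq m_{b+p}$. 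Hence $m_b$ is a true gap of $\sigma$ dominated by $\sbridge(\sigma) = m_a$, which contradicts \cref{rem:pot-type-2}: every true gap of a potentially-type-2 cell must dominate its smallest bridge. That is precisely the contradiction the paper's proof extracts from \cref{prop:path_truegap}, and it is the step your outline needs in place of the non-existent joint unsatisfiability of the conditions on $\tau$. The derivation of minimality from bridge-friendliness is fine (the paper cites \cite[Theorem 2.26]{chau2022barile}; you cite Theorem 2.29, which the paper describes as stating the same implication).
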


\begin{proof}
   It suffices to show that $I_p(L_{N})$ is bridge-friendly since the minimality of the Barile-Macchia resolution is a direct consequence of \cref{thm:friendly_min}.
   
First recall \cite[Lemma 2.33]{chau2022barile} which will be useful in deducing bridge-friendliness:   $I_p(L_{N})$ is bridge-friendly if and only if, for any potentially-type-2 cell $\sigma$ (should it exist), there is no monomial $m \in \G$ such that $m$ is a true gap of $\sigma \setminus \sbridge(\sigma)$ and  $\sbridge(\sigma)\succ m$. 
    
    If $I_p(L_{N})$ has no potentially-type-2 cells, then its Taylor resolution is inherently minimal, making the path ideal bridge-friendly. So, we may assume $I_p(L_{N})$ has potentially-type-2 cells.  Let $\sigma$ be a potentially-type-2 cell of $I_p(L_{N})$. By definition, \( \sbridge(\sigma) \) exists. Since $ \sbridge(\sigma) \neq m_n$ as  $m_n$ cannot be a bridge of $\sigma$, there exists a monomial  $m_i \in \G$ satisfying $\sbridge(\sigma) \succ m_i$.  It is possible to have $m_i=m_n$.

    To utilize \cite[Lemma 2.33]{chau2022barile}, our goal is to show  that $m_i$ cannot be a true gap of $\sigma \setminus \sbridge(\sigma)$. We prove this by contradiction. If $m_i$ is a true gap of $\sigma \setminus \sbridge(\sigma)$, we verify below that $m_i$ is a true gap of $\sigma$. Since $\sbridge(\sigma) \succ m_i$, $\sigma$ has a bridge dominating a true gap,  contradicting \cref{rem:pot-type-2} (b)   as $\sigma$ is potentially-type-2. Thus, no such $m_i$ exists. Therefore, the path ideal  $I_p(L_{N})$ is bridge-friendly.
     
   Now we prove $m_i$ is a true gap of $\sigma$ when  $m_i$ is a true gap of $\sigma \setminus \sbridge(\sigma)$. Since $\sbridge(\sigma) \succ m_i$, monomial $m_i$ does not dominate any bridges of $\sigma$. Thus, we can apply \cref{prop:path_truegap} to $\sigma \setminus \sbridge (\sigma)$ and conclude that
   \begin{itemize}
       \item[(a)] $m_i$ is a gap of $\sigma$; 
       \item[(b)] there is only one monomial $m_k$ in $\sigma  \cap M_{i+p-1}$ since $\sbridge(\sigma) \succ m_i \succ m_k$; 
       \item[(c)]   if $i+p\leq n$, then $m_{i+p}\notin \sigma \setminus \sbridge (\sigma)$ $\iff $ $m_{i+p}\notin \sigma $ since $m_{i+p}\neq  \sbridge(\sigma) $. This is because   $m_i \succ m_{i+p}=\sbridge(\sigma)$  contradicts to $\sbridge(\sigma) \succ m_i$.
   \end{itemize}
 Hence, $m_i$ is a true gap of $\sigma$ by \cref{prop:path_truegap}.
   \end{proof}

In the subsequent discussion, our goal is to demonstrate that for every multidegree $m$, i.e., a monomial in \( R \), there exists at most one critical cell \( \sigma \) such that \( \lcm(\sigma) = m \). Establishing the uniqueness of this critical cell for each multidegree allows us to compute both the projective dimension and Betti numbers of the path ideal via its Barile-Macchia resolution. To pave the way for this claim, we first introduce several auxiliary lemmas.

Recall from \Cref{cor:abs_critical} that the critical cells of \( I_p(L_N ) \) have no bridges and no true gaps, a consequence of  being bridge-friendly.

\begin{lemma}\label{lem:aux2}
Let $\sigma$ be a critical cell of $I_p(L_N)$. Let   $a, b \in \mathbb{N}$ such that  $p<a \leq b-p-1$  and 
$$M=M_a\cup \cdots M_{b-p-1}=\{m_{a-p+1},m_{a-p+2}, \ldots, m_{b-p-1}\}.$$  Assume that  $\sigma \cap M = \{m_{a-p+1}\}$. Then the following statements hold:
\begin{enumerate}
    \item  $\sigma \cap \{m_{a-2p+1}, \ldots, m_{a-p-1}\}= \emptyset $.
    \item $ \lcm(\sigma)$ is divisible by $x_{a-p}$ if and only if $\sigma$ contains $m_{a-p}$.
\end{enumerate}
\end{lemma}

\begin{proof} 
(1) Suppose $\sigma \cap \{m_{a-2p+1}, \ldots, m_{a-p-1}\} \neq  \emptyset $.  Under this assumption, we show that \( m_{a-p} \) is either a bridge or a true gap of \( \sigma \) which leads to a contradiction by \Cref{cor:abs_critical}  since $\sigma$ is a critical cell of a bridge-friendly ideal.

Since we assumed (1) fails, there exists a monomial \( m_j\) in \( \sigma \cap  \{m_{a-2p+1}, \ldots, m_{a-p-1}\}\). Notice that $m_j \in \sigma \cap M_{a-p}$ and $m_{a-p+1}\in \sigma \cap M_{a-1}$ where $j< a-p < a-p+1$ with $(a-p+1)-j \leq p$. By setting $i=a-p$ and $k=a-p+1$, it follows  from \Cref{prop:bridge_gap} that the monomial $m_{a-p}$ is either a bridge or a gap of $\sigma$.

If $m_{a-p}$ is a bridge of $\sigma$, we are done. Now, assume $m_{a-p}$ is a gap of $\sigma$. Since $\sigma$ is a critical cell, it has no bridges which means $m_{a-p}$ does not dominate any bridges of $\sigma$. This allows us to use \cref{prop:path_truegap} to conclude that $m_{a-p}$ is a true gap of $\sigma$, completing the proof of (1). The following is a verification of \cref{prop:path_truegap} (a)-(c):
        \begin{enumerate}
            \item[(a)] \( m_{a-p} \) is a gap of \( \sigma \) by our assumption.
            \item[(b)] Recall that $M_{a-1}=  \{m_{a-p}, m_{a-p+1}, \ldots, m_{a-1}\}$ and $m_{a-p}\notin \sigma$. Then $M_{a-1}\setminus \{m_{a-p}\} \subseteq M $ and $\sigma \cap M=\{m_{a-p+1}\}$ implies that $\sigma \cap   M_{a-1}=\{ m_{a-p+1} \}$.
            \item[(c)] Since $a\leq n$, we need to show $m_a\notin \sigma$. This follows from  $m_a \in M$ and $\sigma \cap M=\{m_{a-p+1}\}$.
        \end{enumerate}

 (2) It is immediate that  \( m_{a-p} \in \sigma \) implies  \( x_{a-p} \) divides \( \lcm(\sigma) \). On the other hand, if \( \lcm(\sigma) \) is divisible by \( x_{a-p} \), then  $\sigma\cap M_{a-p} \neq \emptyset$. This means   $\sigma\cap M_{a-p}= \{m_{a-p}\}$ by part (1). Thus, we have $m_{a-p}\in \sigma$.
\end{proof}

\begin{lemma}\label{lem:imp}
 For a critical cell $\sigma$  of $I_p(L_N)$,   let  $b \in \mathbb{N}\cup\{\infty\}$ such that  $\{i :~ x_i\mid \lcm(\sigma) \text{ and } i\leq b-p-1 \}\neq \emptyset$. Define   \(b_1\) as:
    \[
    b_1 \coloneqq 
    \max \{i :~ x_i\mid \lcm(\sigma) \text{ and } i\leq b-p-1 \}. 
    \] 
   Assume  $\sigma \cap \{m_{b-2p+1}, \ldots, m_{b-p-1}\} =\emptyset$ when $b\in \mathbb{N}$. Then $ \sigma \cap M=\{m_{b_1-p+1}\}$ and $b_1\geq p$ where 
    $$M= M_{b_1}\cup M_{b_1+1} \cup \cdots \cup M_{b-p-1}= \{m_{b_1-p+1}, m_{b_1-p+2},\ldots, m_{b-p-1}\}. $$
\end{lemma}

\begin{proof}
 We analyze the two possible cases for \( b \) separately. 

\textbf{Case 1:} \( b = \infty \). In this case, we have $M= M_{b_1} \cup M_{b_1+1} \cup \cdots M_n$. By the definition of \( b_1 \), it is clear that \( b_1 \) is finite. Given this,  \( \lcm(\sigma) \) is not divisible by \( x_k \) for any integer \( k \geq b_1+1 \). Thus, \( \sigma \cap (M \setminus \{m_{b_1-p+1}\})= \emptyset \).  Moreover, as \( x_{b_1} \) divides \( \lcm(\sigma) \), we have $\sigma \cap M_{b_1}\neq \emptyset$  which guarantees that \(\sigma \cap M=\{ m_{b_1-p+1} \} \) and $b_1\geq p$.

\textbf{Case 2:} \( b \in \mathbb{N} \). 
It follows from the definition  that \( b_1 \leq b-p-1 \). If \( b_1 < b-p-1 \), then \( \lcm(\sigma) \) is divisible by $x_{b_1}$ but it is not divisible by any of the variables  among \( \{x_{b_1+1},\ldots, x_{b-p-1}\} \). This means     $\sigma \cap M_{b_1} \neq \emptyset$ but \( \sigma \cap (M \setminus M_{b_1} )=\emptyset\). Thus, \( \sigma \cap M= \{ m_{b_1-p+1}\} \) and $b_1\geq p$.

If \( b_1 = b-p-1 \),  then the assumption $\sigma \cap \{m_{b-2p+1},\ldots, m_{b-p-1}\} =\emptyset$ becomes $\sigma \cap (M_{b_1} \setminus \{m_{b_1-p+1}\}) = \emptyset$ where $M=M_{b_1}$. Since \( x_{b_1} \) divides \( \lcm(\sigma) \), we have  $\sigma \cap M_{b_1} \neq \emptyset $.  Hence,  $m_{b_1-p+1}$ is the only monomial in  $\sigma \cap M$.
\end{proof}

Building upon the preceding  lemmas, we introduce a sequence of results that are key to understanding the monomials $m \in \G$ that are in  a given critical subset \( \sigma \). We first introduce a new terminology and a few immediate observations that will be used in the next few results.

\begin{definition}
    For a critical cell $\sigma$ of $I_p(L_N)$, define a sequence $(b_0,b_1,b_2, \ldots)$ by setting $b_0=\infty$ and, for $j \geq 1$, 
    \[b_j \coloneqq \max \{i :~ x_i \mid \lcm(\sigma) \text{ and } i \leq b_{j-1} - p - 1\}.\]
    If no such $x_i$ exists, set $b_j = -\infty$.  This sequence  $(b_0,b_1,b_2, \ldots)$ is called the {\bf $\sigma$-sequence}. Notice that for a critical cell $\sigma$, its $\sigma$-sequence is uniquely defined.
\end{definition}

\begin{example}
    Consider the $3$-path ideal of an $8$-path where $n=6$ and $p=3$:
    \[I=(x_1x_2x_3,x_2x_3x_4,x_3x_4x_5,x_4x_5x_6,x_5x_6x_7,x_6x_7x_8).\]
   and the cell $\sigma=\{m_1,m_2,m_5,m_6\}$. One can use   \cref{prop:bridge_gap} and \cref{prop:path_truegap} to show $\sigma$ is a critical cell of $I$. Computation of the $\sigma$-sequence $(b_1,b_2,\ldots)$ relies on  $\lcm(\sigma)=x_1x_2\cdots x_8$.  
By the set-up, we have $b_0=\infty$ and 
        \begin{align*}
           b_1 &= \max \{i :~ x_i \mid \lcm(\sigma)\}=8 \\
           b_2 &=\max \{i :~ x_i \mid \lcm(\sigma) \text{ and } i \leq b_1-4=4 \}=4
        \end{align*}
  Since $\{i :~ x_i \mid \lcm(\sigma) \text{ and } i \leq b_2-4=0 \}=\emptyset$, we have $b_3=-\infty$.
\end{example}

As the above example indicates, each $\sigma$-sequence is finite. We discuss this more in detail in the following observation.
\begin{observation}
    Let  $\sigma$ be a critical cell of $I_p(L_N)$. 
    \begin{enumerate}
        \item[(a)]  Observe that the  $\sigma$-sequence $(b_0,b_1,b_2, \ldots)$ is finite. To see this, first note that $b_1$  is the largest index of a variable in the support of $\lcm(\sigma)$. Next, notice that   the sequence $(b_0,b_1,b_2,\ldots)$ strictly decreases  after $b_1$  since $b_j \leq b_{j-1}-p-1$ for $j\geq 2$. Since $\supp(\lcm(\sigma))$ is finite, the $\sigma$-sequence  eventually reaches $- \infty$ after finitely many steps, i.e., there exists an $\ell \geq 1$ for which 
$$\{i :~ x_i\mid \lcm(\sigma) \text{ and } i\leq b_{\ell-1}-p-1 \}\neq \emptyset,$$
$$ \{i :~ x_i\mid \lcm(\sigma) \text{ and } i\leq b_{\ell}-p-1 \}=\emptyset.$$ This means $b_{\ell}$ is finite and $b_{\ell+1} = -\infty$. 
    \end{enumerate}
  From now on, we write the $\sigma$-sequence as a finite sequence $(b_0,b_1,\ldots, b_{\ell})$ where each $b_i \in \mathbb{N}$ is non-zero.
  \begin{enumerate}
      \item[(b)]  Since  $b_j\leq b_{j-1}-p-1$ for each $1\leq j\leq \ell$, we have
      $$  (\ell-j) p + (\ell-j+1) \leq b_j<  \cdots <b_1 \leq n+p-1.$$
  \end{enumerate}
\end{observation}

Now, we can determine which monomials in $\G$ are part of a critical cell $\sigma$, using the $\sigma$-sequence as our key tool.

\begin{proposition}\label{prop:critical}
Let  $\sigma$ be a critical cell of $I_p(L_N)$ with its $\sigma$-sequence $(b_0,b_1, \ldots, b_{\ell})$. Let
\begin{align*}
   M_{b_j,b_{j-1}} &\coloneqq M_{b_j}\cup M_{b_j+1}\cup \cdots \cup M_{b_{j-1} -p-1}\\
   & = \{m_{b_j-p+1},m_{b_j-p+2}, \ldots, m_{b_{j-1}-p-1}\}.
\end{align*}
Then, we have $\sigma \cap M_{b_j,b_{j-1}} =\{m_{b_j-p+1}\}$ when  $1 \leq j \leq \ell$. Moreover, $m_k\notin \sigma$ for  $k \leq b_{\ell} - p - 1$.

\end{proposition}

\begin{proof}
We use induction on $j$ where $1 \leq j \leq \ell$.  The base case $j=1$ is covered in \cref{lem:imp}.  

For a fixed $j < \ell$,  suppose the statement holds for each $ k \in \{1,\dots, j\}$, that is,  $\sigma \cap M_{b_k,b_{k-1}}=\{ m_{b_k-p+1}\}$.   Since   $\sigma \cap M_{b_j,b_{j-1}}=\{m_{b_j-p+1}\}$  by the induction hypothesis and $ p<   b_j \leq b_{j-1}-p-1$,  we can apply  \cref{lem:aux2} to obtain   \begin{equation*}
  \sigma \cap \{m_{b_j-2p+1}, \ldots, m_{b_{j}-p-1}\}=\emptyset.  
\end{equation*} 
The quality above and the fact that  $\{i :~ x_i\mid \lcm(\sigma) \text{ and } i\leq b_j-p-1 \}\neq \emptyset$ allow us to utilize  \cref{lem:imp}. Then, we conclude that $\sigma \cap M_{b_{j+1},b_{j}}=\{m_{b_{j+1}-p+1}\}$, as desired.

For the final part,  note that  $\lcm(\sigma)$ is not divisible by any variable $x_k$ with $k \leq b_{\ell} - p - 1$. This implies that $\sigma \cap M_k=\emptyset$  for any such $k$. Hence, $m_k\notin \sigma$ for  $k \leq b_{\ell} - p - 1$.
\end{proof}

\begin{proposition}\label{prop:content}
    If $\lcm(\sigma) = \lcm(\sigma')$ for two critical cells $\sigma$ and $\sigma'$ of $I_p(L_N)$, then $\sigma = \sigma'$. 
\end{proposition}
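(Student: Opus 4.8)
The plan is to show that the sequence $\{b_i\}_{i\in\NN\cup\{0\}}$ defined in \cref{prop:critical} is determined entirely by $\lcm(\sigma)$, and that $\sigma$ in turn is determined by this sequence. First I would note that the definition of $b_i$ only references $\lcm(\sigma)$ (via the divisibility conditions $x_j\mid\lcm(\sigma)$), so if $\lcm(\sigma)=\lcm(\sigma')$ then the two sequences $\{b_i\}$ and $\{b_i'\}$ coincide; in particular the cut-off index $l$ from part (1) of \cref{prop:critical} is the same for both cells. It remains to recover the ``content'' of $\sigma$ — i.e.\ which $m_k$ lie in $\sigma$ — from $\lcm(\sigma)$ together with this common sequence.

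Next I would invoke \cref{prop:critical} to pin down the content precisely. Parts (2) and (3) tell us that, for each $1\leq i\leq l$, the \emph{only} generator from the block $\{m_{b_i-p+1},\dots,m_{b_{i-1}-p-1}\}$ belonging to $\sigma$ is $m_{b_i-p+1}$, and that no $m_k$ with $k\leq b_l-p-1$ lies in $\sigma$. Since the blocks $\{m_{b_i-p+1},\dots,m_{b_{i-1}-p-1}\}$ for $i=1,\dots,l$ together with the tail $\{m_k : k\leq b_l-p-1\}$ cover all indices $k\leq b_0-p-1=\infty$ — that is, all of $\G(I_p(L_N))$ — the membership of every generator $m_k$ in $\sigma$ is decided: $m_k\in\sigma$ if and only if $k=b_i-p+1$ for some $1\leq i\leq l$. (One small point to check: these blocks are consecutive and exhaust $\{1,\dots,n\}$ because each $b_i\leq b_{i-1}-p-1$, so $b_i-p-1$ and $b_{i+1}-p+1$ are adjacent, namely $b_{i+1}-p+1\leq b_i-p-1$ with the gap $\{b_{i+1}-p+2,\dots,b_i-p-1\}$ itself contained in the block indexed by $i$; I should double-check the boundary arithmetic here but it follows from the proof of \cref{prop:critical}.)

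Putting this together: because $\lcm(\sigma)=\lcm(\sigma')$ forces $\{b_i\}=\{b_i'\}$ and $l=l'$, the above characterization shows $m_k\in\sigma\iff k\in\{b_i-p+1:1\leq i\leq l\}\iff m_k\in\sigma'$, hence $\sigma=\sigma'$. I expect the only real subtlety to be the bookkeeping in verifying that the blocks from \cref{prop:critical} genuinely partition the index set $\{1,\dots,n\}$ with no generator left unaccounted for — everything else is a direct consequence of the already-established \cref{prop:critical}. It may be cleanest to phrase the argument as: ``$\sigma$ is critical, so by \cref{prop:critical} its content is the set $\{m_{b_i-p+1} : 1\leq i\leq l\}$, which depends only on $\lcm(\sigma)$; the same holds for $\sigma'$; since $\lcm(\sigma)=\lcm(\sigma')$ the contents agree, so $\sigma=\sigma'$.''
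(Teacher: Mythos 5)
There is a genuine gap, and it is precisely the ``boundary arithmetic'' you flagged but dismissed. The blocks in \cref{prop:critical} do \emph{not} exhaust $\{1,\dots,n\}$: block $i$ runs over $\{b_i-p+1,\dots,b_{i-1}-p-1\}$ and block $i+1$ runs over $\{b_{i+1}-p+1,\dots,b_i-p-1\}$, so the single index $b_i-p$ falls strictly between them and is covered by neither. Likewise $b_l-p$ is sandwiched between block $l$ (which starts at $b_l-p+1$) and the tail (which stops at $b_l-p-1$). So the indices $b_1-p,\,b_2-p,\,\dots,\,b_l-p$ are all unaccounted for, and \cref{prop:critical} alone says nothing about whether $m_{b_i-p}$ lies in $\sigma$. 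Your asserted characterization ``$m_k\in\sigma$ iff $k=b_i-p+1$ for some $i$'' is therefore unjustified, and in general false — these leftover generators can and do occur in critical cells.

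The missing ingredient is \cref{lem:aux2}(2), which states exactly that $m_{b_i-p}\in\sigma$ if and only if $x_{b_i-p}$ divides $\lcm(\sigma)$. Since that divisibility condition depends only on $\lcm(\sigma)$, and you have already shown $\lcm(\sigma)=\lcm(\sigma')$ forces $\{b_i\}=\{b_i'\}$, this closes the gap: membership of $m_{b_i-p}$ is the same in $\sigma$ and $\sigma'$. With that added, your argument matches the paper's, which makes the same interval decomposition explicit (writing $\mathbb{R}$ as a union of half-open intervals with endpoints $b_i-p+1$) and then invokes \cref{lem:aux2} for the leftover indices. The structure of your proof is right; you just need to recognize that the blocks leave one index per level uncovered and handle those with \cref{lem:aux2} rather than assuming the blocks partition the index set.
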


\begin{proof}
  Let $\sigma$ and $\sigma'$ be two critical cells of $I_p(L_N)$ where $(b_0,b_1,\ldots, b_{\ell})$ is the $\sigma$-sequence and  $(b'_0,b'_1,\ldots, b'_k)$ be the $\sigma'$-sequence. Recall that the value of each $b_i$ and $b'_j$ are determined solely by the least common multiples.  Since $\lcm(\sigma) = \lcm(\sigma')$, we must have $(b_0,b_1,\ldots, b_{\ell})= (b'_0,b'_1,\ldots, b'_k)$. 

  Next, notice that $\G$ can be written as the disjoint union of the following three sets:
  $$\G = \{m_1, \ldots, m_{b_{\ell} -p}\} \sqcup \Bigg( \bigcup_{j=1}^{\ell}  M_{b_{j}, b_{j-1}} \Bigg) \sqcup \Bigg( \bigcup_{j=1}^{\ell} \{ m_{b_{j}-p}\} \Bigg).$$
 It follows from \cref{prop:critical} that none of the monomials in the first set are contained in \( \sigma \) or \( \sigma' \). Moreover, the only monomials in the second set that appear in both \( \sigma \) and \( \sigma' \) are those of the form \( m_{b_j - p +1} \) for each \( 1 \leq j \leq \ell \). For the last set, we apply \cref{lem:aux2} (2) and conclude that \( m_{b_j - p} \) is contained in the critical cell \( \tau \) if and only if \( x_{b_j - p} \) divides \( \lcm(\tau) \), where \( \tau \in \{\sigma, \sigma'\} \) for \( 1 \leq j \leq \ell \). Therefore,   $\sigma = \sigma'$. 
\end{proof}

From the preceding proposition, we deduce that the least common multiples of distinct critical cells of \(I_p(L_N)\) are different. This particularly implies the following information on its multi-graded Betti numbers.

\begin{corollary}\label{thm:path_min}
    For any monomial $m$ and any integer $i$, we have
    \[
    \beta_{i,m}(R/I_p(L_N))=\begin{cases}
        1 & \text{ if there exists a critical subset of cardinality } i \text{ whose lcm equals } m,\\
        0 & \text{ otherwise.}
    \end{cases}
    \]
\end{corollary}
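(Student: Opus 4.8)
The plan is to derive \cref{thm:path_min} directly from the minimality of the Barile-Macchia resolution together with the uniqueness statement in \cref{prop:content}. Recall that by \cref{thm:path_friendly_min}, the ideal $I_p(L_N)$ is bridge-friendly and its Barile-Macchia resolution $\mathcal{F}_A$ is minimal, where $A$ is the Barile-Macchia matching with respect to the total order $m_1 \succ \cdots \succ m_n$. By \cref{thm:morseres}, $(\mathcal{F}_A)_i$ is the free $R$-module with a basis indexed by the $A$-critical cells of cardinality $i$, and each such cell $\sigma$ contributes a generator in multidegree $\lcm(\sigma)$. Since $\mathcal{F}_A$ is minimal, the multigraded Betti number $\beta_{i,m}(R/I_p(L_N))$ equals the number of $A$-critical cells $\sigma$ with $|\sigma| = i$ and $\lcm(\sigma) = m$.

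The key point is then that this count is always $0$ or $1$. First I would note that \cref{prop:content} shows that the map $\sigma \mapsto \lcm(\sigma)$ is injective on the set of all critical cells of $I_p(L_N)$ (of any cardinality). Hence, for a fixed monomial $m$, there is at most one critical cell $\sigma$ with $\lcm(\sigma) = m$; if such a $\sigma$ exists, its cardinality is determined, so for exactly one value of $i$ we get $\beta_{i,m} = 1$, and for every other value of $i$ (and for every $m$ not realized as an $\lcm$ of a critical cell) we get $\beta_{i,m} = 0$. This is precisely the case distinction in the statement: $\beta_{i,m} = 1$ iff there exists a critical subset of cardinality $i$ whose $\lcm$ equals $m$, and $0$ otherwise. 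One small bookkeeping remark to include: \cref{prop:content} is stated for critical cells of $I_p(L_N)$, and via bridge-friendliness (\cite[Corollary 2.28]{chau2022barile}) these are exactly the cells with no bridges and no true gaps, i.e. the absolutely critical cells, which coincide with the $A$-critical cells appearing in $\mathcal{F}_A$; so there is no discrepancy between ``critical'' in \cref{prop:content} and ``$A$-critical'' in \cref{thm:morseres}.

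There is essentially no hard step here — the work was already done in establishing bridge-friendliness and in \cref{prop:content}. The only thing to be slightly careful about is phrasing: the statement asserts $\beta_{i,m} \in \{0,1\}$ with the value determined by the existence of a critical cell of the right cardinality and $\lcm$, and one should make explicit that minimality of $\mathcal{F}_A$ is what converts ``rank of the $i$-th free module in multidegree $m$'' into ``$\beta_{i,m}$'' (without minimality one would only get an upper bound). So the proof is just: invoke minimality to identify $\beta_{i,m}$ with the number of critical cells of cardinality $i$ and $\lcm$ equal to $m$, then invoke \cref{prop:content} to see this number is at most $1$.

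\begin{proof}
By \cref{thm:path_friendly_min}, the Barile-Macchia resolution $\mathcal{F}_A$ of $R/I_p(L_N)$, taken with respect to the total order $m_1 \succ \cdots \succ m_n$, is minimal. By \cref{thm:morseres}, $(\mathcal{F}_A)_i$ is the free $R$-module whose basis is indexed by the $A$-critical cells of cardinality $i$, and a critical cell $\sigma$ contributes a basis element in multidegree $\lcm(\sigma)$. Since $\mathcal{F}_A$ is minimal, for every monomial $m$ and every integer $i$ we have
\[
\beta_{i,m}(R/I_p(L_N)) = \#\{\sigma : \sigma \text{ is an } A\text{-critical cell}, \ |\sigma| = i, \ \lcm(\sigma) = m\}.
\]
Because $I_p(L_N)$ is bridge-friendly, by \cite[Corollary 2.28]{chau2022barile} the $A$-critical cells are exactly the cells of $I_p(L_N)$ with neither bridges nor true gaps, which is the notion of critical cell used in \cref{prop:content}. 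By \cref{prop:content}, if $\sigma$ and $\sigma'$ are critical cells with $\lcm(\sigma) = \lcm(\sigma')$, then $\sigma = \sigma'$; in particular the set on the right-hand side above has at most one element. Hence $\beta_{i,m}(R/I_p(L_N)) = 1$ if there exists a critical subset of cardinality $i$ whose $\lcm$ equals $m$, and $\beta_{i,m}(R/I_p(L_N)) = 0$ otherwise.
\end{proof}
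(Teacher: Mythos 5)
Your proof is correct and takes the same approach the paper implicitly uses: invoke minimality of the Barile--Macchia resolution (from \cref{thm:path_friendly_min} and \cref{thm:morseres}) to identify $\beta_{i,m}$ with the count of critical cells of cardinality $i$ and lcm $m$, then apply \cref{prop:content} to see this count is at most one. The bookkeeping remark reconciling the notion of ``critical'' in \cref{prop:content} with the $A$-critical cells of \cref{thm:morseres} via bridge-friendliness is a nice touch, though the paper leaves it implicit.
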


From the characterization of critical cells in  \cref{prop:critical} and the insights from the proof of  \cref{prop:content}, we can deduce the projective dimension of \(I_p(L_{n+p-1})\).

\begin{corollary}\label{cor:pdim_path}
      Let \( n \) be expressed as \( n = (p+1)q + s \) where \( 0 \leq s \leq p \). The projective dimension of \( R/ I_p(L_{n+p-1}) \) is given by:
    \[
    \pd(R/I_p(L_{n+p-1})) =
    \begin{cases}
    2q & \text{if } s = 0,\\
    2q + 1 & \text{if } s = 1,\\
    2q + 2 & \text{otherwise}.
    \end{cases}
    \]  
\end{corollary}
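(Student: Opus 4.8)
The plan is to build an explicitly maximum-size critical cell, then use \cref{prop:critical} (together with the structural insight from the proof of \cref{prop:content}) to show no critical cell can be larger. The key structural fact to extract first is the claim flagged in the paragraph before the corollary: among any $p+1$ consecutive generators $m_k, m_{k+1}, \ldots, m_{k+p}$, at most two can lie in a critical cell $\sigma$. This follows from \cref{prop:critical}: the sequence $\{b_i\}$ decreases by \emph{at least} $p+1$ at each step, and in the window $(b_{i+1}-p+1, b_i-p+1]$ the only generators of $\sigma$ are $m_{b_i-p+1}$ and possibly $m_{b_i-p}$ (by \cref{lem:aux2}(2) applied in the proof of \cref{prop:content}); outside the union of these windows and the top window $(b_1-p+1, \infty)$, no generator lies in $\sigma$, and the same two-per-window bound holds in $(b_1-p+1,\infty)$ by \cref{lem:aux1}. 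So the whole index range $\{1,\ldots,n\}$ is partitioned (via the decomposition of $\mathbb{R}$ used in the proof of \cref{prop:content}) into blocks of length $\geq p+1$, each contributing at most $2$ elements to $\sigma$.

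Given this, write $n = (p+1)q + s$ with $0 \le s \le p$. The upper bound is a counting argument: partition $\{1,\ldots,n\}$ into $q$ blocks of $p+1$ consecutive indices plus a leftover block of $s$ indices. Each full block contributes $\le 2$, giving $\le 2q$ so far. For the leftover block of size $s$: if $s=0$ there is no leftover, so $|\sigma| \le 2q$; if $s=1$, the leftover is a single index, contributing $\le 1$, so $|\sigma|\le 2q+1$; if $s \ge 2$, the leftover contributes $\le 2$ (and here one should double-check that even a short block of length $\ge 2 $ really can host two elements of a critical cell, which it can since the relevant obstruction — a true gap or bridge — requires indices spaced by at most $p$, compatible with two adjacent generators $m_j, m_{j+1}$), so $|\sigma|\le 2q+2$. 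Since $\pd(R/I_p(L_{n+p-1}))$ equals the maximal cardinality of a critical cell (by \cref{thm:path_friendly_min}, \cref{thm:morseres}, and \cref{thm:path_min}), this gives the stated upper bounds.

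For the matching lower bound, exhibit a critical cell of the required size. The natural candidate is a cell consisting of pairs of adjacent generators spaced $p-1$ apart: roughly $\sigma = \{m_1, m_2,\, m_{p+2}, m_{p+3},\, m_{2(p+1)+1}, m_{2(p+1)+2},\, \ldots\}$, i.e. in each block $\{(p+1)t+1,\ldots,(p+1)t+(p+1)\}$ take the two generators $m_{(p+1)t+1}$ and $m_{(p+1)t+2}$, and in the leftover block of size $s$ take $m_{(p+1)q+1}$ if $s \ge 1$ and additionally $m_{(p+1)q+2}$ if $s \ge 2$. One then verifies directly via \cref{cor:abs_critical} (equivalently \cite[Corollary 2.28]{chau2022barile}) that this $\sigma$ has no bridges and no true gaps: a bridge or gap at index $i$ requires $m_j \in \sigma \cap M_i$, $m_k \in \sigma\cap M_{i+p-1}$ with $j<i<k\le j+p$, and the spacing of our chosen pairs (a gap of $p-1$ between consecutive pairs) is engineered so that no such $i \notin \sigma$ squeezes between a pair; one checks the true-gap conditions (b), (c) of \cref{prop:path_truegap} fail at every candidate. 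Counting, $|\sigma| = 2q + \min(s,2)$ when $s\le 1$ and $=2q+2$ when $s\ge 2$, except we must be careful at $s=0$: then $\sigma$ has exactly $2q$ elements. This matches the upper bound in each case.

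The main obstacle I anticipate is the lower-bound verification — not the counting, but checking carefully that the proposed cell is genuinely critical, i.e. has no true gaps, since the true-gap condition in \cref{prop:path_truegap} is delicate and depends on the precise placement of the pairs near the boundary (the leftover block) and on the interaction between consecutive pairs across a block boundary. In particular one must confirm that the generator immediately following a pair, $m_{(p+1)t+3}$, is neither a gap forced into $\sigma$'s lcm nor creates a true gap, and handle the edge effect that $m_1$ and $m_n$ are never bridges or gaps (so the extremal blocks need separate, easy, attention). A secondary subtlety is making the ``at most two per block'' claim fully rigorous from \cref{prop:critical}: one should phrase it as a lemma and prove it by tracking which $b_i$-window each index falls into, rather than leaving it as the informal remark in the text.
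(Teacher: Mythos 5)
Your overall strategy — bound the size from above via the ``at most two per window of $p+1$ consecutive generators'' claim (extracted from \cref{prop:critical} and the window decomposition in the proof of \cref{prop:content}), then exhibit an explicit critical cell of maximum size — is exactly what the paper intends. The upper-bound half of your argument is sound. However, the explicit cell you propose for the lower bound is wrong when $s=1$, and this is not a detail that the rest of your argument can paper over: it genuinely fails to be critical.

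Concretely, take $p=2$ and $n=4$, so $q=1$, $s=1$, and $I_2(L_5)=(x_1x_2,\,x_2x_3,\,x_3x_4,\,x_4x_5)$. Your candidate is $\sigma=\{m_1,m_2,m_4\}$. Then $m_3$ is a gap of $\sigma$, and checking \cref{prop:path_truegap}: condition (a) holds with $j=2$, $k=4$ (indeed $4-2\le p$); condition (b) holds since $\sigma\cap M_4=\{m_4\}$; and condition (c) is vacuous because $3+p=5>n=4$, i.e.\ $m_{i+p}$ does not exist. So $m_3$ \emph{is} a true gap of $\sigma$, and $\sigma$ is not absolutely critical. The same failure occurs for every $s=1$ instance: your tail singleton $m_n$ leaves the true-gap escape clause (c) vacuous for the index just below it, because there is no $m_{i+p}$ above $m_n$ to block it.

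The cause is the asymmetry of the total order $m_1\succ\cdots\succ m_n$: the clause $m_{i+p}\notin\sigma$ in \cref{prop:path_truegap}(c) is what ``caps'' a pair \emph{from above}, so a maximum critical cell must be built with its pairs descending from the top, not ascending from the bottom. The cell that \cref{prop:critical} actually produces (by taking $b_1=n+p-1$ and then $b_{i+1}=b_i-(p+1)$, with both $m_{b_i-p}$ and $m_{b_i-p+1}$ chosen whenever possible) is
\[
\sigma=\{m_{n-1},m_n\}\cup\{m_{n-p-2},m_{n-p-1}\}\cup\cdots\cup\{m_{p+s},m_{p+s+1}\}\cup T,
\]
where $T=\emptyset$ if $s=0$, $T=\{m_1\}$ if $s=1$, and $T=\{m_{s-1},m_s\}$ if $s\ge 2$; this has size $2q$, $2q+1$, $2q+2$ respectively. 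For $p=2$, $n=4$ this gives $\{m_1,m_3,m_4\}$, which one checks has no bridges and no true gaps (now $m_2$ fails condition (c) because $m_4\in\sigma$). Incidentally your cell \emph{does} happen to be critical when $s=0$ or $s\ge 2$ — there the block boundary always supplies an $m_{i+p}\in\sigma$ to defeat (c) — which is perhaps why the flaw is easy to miss; but since the proof must cover all $s$, you need the top-down construction (or, as the paper does, simply read the maximal cell off the $b_i$-sequence of \cref{prop:critical} rather than guessing one).
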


\begin{proof}
Among every collection of \(p+1\) consecutive monomials, at most two can be in a critical cell by \cref{prop:critical}. Namely,  among the following monomials $m_{b_j-p} , m_{b_j-p+1}, \ldots, m_{b_j}$ for $1\leq j \leq \ell$, we can have at most $m_{b_j-p} , m_{b_j-p+1} \in \sigma$. This allows us to construct cells of maximal cardinality as follows:
\begin{itemize}
    \item when $s=0$, set $\sigma_1=\{m_p,m_{p+1},\ldots, m_{n-p-2},m_{n-p-1},  m_{n-1},m_n\}$ where $|\sigma_1|=2q$,
    \item when $s=1$, set $\sigma_2=\{m_1,m_{p+1},m_{p+2},\ldots, m_{n-p-2},m_{n-p-1},  m_{n-1},m_n\}$ where $|\sigma_2|=2q+1$,
    \item when $2\leq s\leq p$, set $\sigma_3=\{m_{s-1},m_s, m_{p+s},m_{p+s+1},\ldots, m_{n-p-2},m_{n-p-1},  m_{n-1},m_n\}$ where $|\sigma_3|=2q+2$.
\end{itemize}
One can verify that each of these cells $\sigma_1, \sigma_2$ and $\sigma_3$ are absolutely critical as  neither of them have bridges or true gaps by \cref{prop:bridge_gap} and \cref{prop:path_truegap}. 
Consequently, we can derive the maximal cardinality of a critical cell for each case, thus obtaining the projective dimension. 
\end{proof}
A formula for the projective dimension of the path ideal of a path was also given in \cite{AF18}. The formula from \cite{AF18} matches ours. However, while we express the projective dimension based on the number of minimal generators of the $p$-path ideal, \cite{AF18} does so using the length of the path.

We also recover the recursive formula for graded Betti numbers from \cite{BHK10}. This formula was utilized in \cite{AF18} to provide explicit calculations for the projective dimension and regularity of path ideals of paths and cycles.

\begin{theorem}\label{thm:betti_path}
    For all indices \( r,d \), we have
    \[ \beta_{r,d}(R/I_p(L_n)) = \beta_{r,d}(R/I_p(L_{n-1})) +\beta_{r-1,d-{p}}(R/I_p(L_{n-(p+1)}))+\beta_{r-2,d-(p+1)}(R/I_p(L_{n-(p+1)})). \]
\end{theorem}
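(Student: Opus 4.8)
The plan is to exploit the bijection between critical cells and basis elements of the minimal free resolution (via \cref{thm:path_friendly_min} and \cref{thm:path_min}) and to set up an explicit correspondence between critical cells of $I_p(L_n)$ and those of the three smaller path ideals $I_p(L_{n-1})$, $I_p(L_{n-(p+1)})$, and (again) $I_p(L_{n-(p+1)})$. Concretely, since $\beta_{r,d}(R/I_p(L_n))$ counts critical cells $\sigma$ with $|\sigma|=r$ and $\deg\lcm(\sigma)=d$, it suffices to partition the set of critical cells of $I_p(L_n)$ into three classes whose counts are governed by the three terms on the right-hand side. The natural split is according to the behavior of $\sigma$ near the last generator $m_n = x_n x_{n+1}\cdots x_{n+p-1}$: either $m_n \notin \sigma$, or $m_n \in \sigma$ and $x_n \nmid \lcm(\sigma\setminus m_n)$ (so $m_{n-1},\dots,m_{n-p+1}\notin\sigma$, in particular $m_n$ is not a bridge), or $m_n \in \sigma$ together with some $m_j$, $n-p+1\le j\le n-1$.

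First I would handle the case $m_n \notin \sigma$. Here I claim the map $\sigma \mapsto \sigma$ is a bijection onto critical cells of $I_p(L_{n-1})$ preserving both cardinality and $\lcm$, giving the term $\beta_{r,d}(R/I_p(L_{n-1}))$. One direction is immediate since the generators $m_1,\dots,m_{n-1}$ of $I_p(L_{n-1})$ are literally a subset of those of $I_p(L_n)$, and by \cref{cor:abs_critical} (together with bridge-friendliness) being critical is equivalent to having no bridges and no true gaps; I need to check that $m_n \notin \sigma$ ensures this condition is unaffected by passing between the two rings --- the only subtlety is whether $m_n$ could be a true gap of $\sigma$ in $I_p(L_n)$, which would require $\lcm(\sigma\cup m_n)=\lcm(\sigma)$, i.e. $m_n \mid \lcm(\sigma)$, and then \cref{prop:path_truegap} must be examined, but condition (c) is vacuous at the end of the path and (b) forces the configuration that makes $m_n$ genuinely a true gap, so I must argue this situation cannot occur for a critical $\sigma$ --- this is exactly the kind of end-of-path bookkeeping that \cref{prop:path_truegap} and \cref{lem:tiny} were designed for.

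Next, for the case $m_n \in \sigma$ with $x_n \nmid \lcm(\sigma \setminus m_n)$: then $x_n,\dots,x_{n+p-1}$ each divide $\lcm(\sigma)$ only because of $m_n$, so $\lcm(\sigma) = \lcm(\sigma\setminus m_n)\cdot x_n\cdots x_{n+p-1}/\gcd(\cdots)$; more precisely, writing $\tau = \sigma \setminus m_n$, one has $\tau$ supported on $m_1,\dots,m_{n-p-1}$ (since $m_{n-p},\dots,m_{n-1}$ would all put some $x_i$ with $i\ge n$ into $\lcm$... wait, $m_{n-p}=x_{n-p}\cdots x_{n-1}$ does not involve $x_n$), so I need to be careful: the condition is that $m_{n-1},\dots,m_{n-p+1}\notin\sigma$, which forces $\tau\subseteq\{m_1,\dots,m_{n-p},\ m_{n-p}?\}$; in any case I would show $\tau$ is a critical cell of $I_p(L_{n-(p+1)})$ (its generators being $m_1,\dots,m_{n-p-1}$) and that $\lcm(\sigma) = x_{n-p}\cdots? $ --- the cleanest statement is $\deg\lcm(\sigma) = \deg\lcm(\tau) + p$ is \emph{not} right; rather since $m_n$ contributes exactly the $p$ new variables $x_n,\dots,x_{n+p-1}$ but also possibly shares nothing, $\deg\lcm(\sigma)=\deg\lcm(\tau)+p$, matching the shift $\beta_{r-1,d-p}$. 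The third case, $m_n\in\sigma$ and some $m_j$ with $n-p+1\le j\le n-1$ also in $\sigma$, is where $m_n$ is not a bridge but the overlap creates the shift by $p+1$: here I would delete both $m_n$ and argue the truncation lands in $I_p(L_{n-(p+1)})$ with a cardinality drop of $2$ and degree drop of $p+1$, yielding $\beta_{r-2,d-(p+1)}$. The main obstacle will be verifying in each case that criticality is preserved in both directions --- i.e. that no bridge or true gap is created or destroyed by the truncation --- and pinning down the exact $\lcm$-degree shift; the end-of-path analysis via \cref{prop:path_truegap}(c) and \cref{lem:aux1} is what makes these verifications go through, and I expect the boundary cases (small $n$, or $n-(p+1)\le 0$ where the smaller ideal is the zero ideal and its Betti numbers vanish except $\beta_{0,1}=1$) to need separate, brief attention.
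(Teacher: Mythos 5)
Your approach is essentially the same as the paper's: count critical cells via \cref{thm:path_min}, partition them according to how $\sigma$ meets the tail generators, and map each class by truncation to critical cells of the smaller path ideals, with the degree shift coming from the variables that the deleted generator(s) contribute. Your split on cases 2 and 3 is phrased slightly differently than the paper's (which separates on whether the penultimate generator $m_{n-p}$ lies in $\sigma$), but the two partitions agree on critical cells by \cref{prop:bridge_gap} and \cref{prop:path_truegap}; also, the worry in your case 1 about the last generator being a true gap is moot, since --- as noted just before \cref{prop:bridge_gap} --- the endpoint generators $m_1$ and $m_n$ can never be gaps or bridges of any cell.
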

\begin{proof}
    By  \cref{thm:path_min}, \( \beta_{r,d}(R/I_p(L_n)) \) counts the critical cells of cardinality \( r \) and degree \( d \). To derive our desired expression, it suffices  to partition the set of critical subsets of cardinality $r$ and degree $d$ in an appropriate way.  Recall that $\G(I_p(L_n)) = \{m_1,\ldots, m_{n-p+1}\}$ and neither $m_1$ nor $m_{n-p+1}$ can be a bridge or a true gap of any cell of $I_p(L_n)$. Consider a cell \( \sigma \) of $I_p(L_n)$.  The following three scenarios for \( \sigma \) completes the proof:

\textbf{Case 1:} Suppose \( m_{n-p+1} \notin \sigma \). In this case, \( \sigma \) is a cell of both \( I_p(L_n) \) and \( I_p(L_{n-1}) \). Our goal is to show the following:  
\( \sigma \) is a critical cell of \( I_p(L_n) \) if and only if it is a critical cell of \( I_p(L_{n-1}) \).  

Notice that any critical cell of \( I_p(L_n) \) is automatically a critical cell of \( I_p(L_{n-1}) \). Conversely, any critical cell of \( I_p(L_{n-1}) \) must also be a critical cell of \( I_p(L_n) \), since \( m_{n-p+1} \) cannot be a true gap—indeed, it cannot be a gap at all. This completes the proof.

\textbf{Case 2:} Suppose \( m_{n-p+1} \in \sigma \) but \( m_{n-p} \notin \sigma \). Our goal is to show the following:  $\sigma$ is a critical cell of  $I_{p}(L_{n})$ if and only if $\sigma \backslash \{m_{n-p+1}\}$ is a critical cell of $I_{p}(L_{n-(p+1)})$. Recall that $\G(I_p(L_{n-(p+1)})) = \{m_1,\ldots, m_{n-2p}\}$ and neither $m_1$ nor $m_{n-2p}$ can be a bridge or true gap of any cell of $I_{p}(L_{n-(p+1)})$.

Suppose $\sigma$ is a critical cell of  $I_{p}(L_{n})$. Let $\tau=\sigma \backslash \{m_{n-p+1}\}$. We first show that $\tau$ is a cell of $I_{p}(L_{n-(p+1)})$, i.e.  there exists  no $m_j \in \sigma$ for  $n-2p< j < n-p$. If there is at least one such $m_j \in \sigma$, then $m_{n-p}$ is a true gap of $\sigma$ by \cref{prop:path_truegap} as we explain in the following steps: (a)  $m_j, m_{n-p+1} \in \sigma$ and $(n-p+1)-j \leq p$; (b)  $\sigma \cap M_{n-1}=\{m_{n-p+1}\}$;  (c)  $n>n-p+1$. Since $\sigma$ is critical, it cannot have a true gap.  Thus, $m_j \notin \sigma$.

Next, we show that \( \tau \) is critical. Note that \( \tau \) has no bridges; otherwise, \( \sigma \) would have a bridge, which is impossible since \( \sigma \) is critical. If $\tau$ has a true gap, say $m_i$,  then  $m_i$ cannot dominate any bridges of $\tau$ since it has none. Then, we apply \cref{prop:path_truegap}   to $m_i$ and $\tau$ and conclude that $m_i$ is a true gap of $\sigma$, a contradiction. Thus, $\tau$ is indeed critical. We verify  \cref{prop:path_truegap} (a)-(c) for $m_i$ and $\sigma$ below:
\begin{enumerate}
    \item[(a)]  there exists $m_j \in \tau \cap M_i \subset \sigma \cap M_i $ and $m_k \in \tau \cap M_{i+p-1} \subset \sigma \cap M_{i+p-1}$ such that $j<i<k\leq n-2p$ and $k-j \leq p$.  So, \cref{prop:path_truegap} (a) holds for $m_i$ and $\sigma$.
    \item[(b)] $\tau \cap M_{i+p-1} = \{m_k \} = \sigma \cap M_{i+p-1}$ since $k\leq i+p-1< n-p-1$. This means \cref{prop:path_truegap} (b) holds for $m_i$ and $\sigma$.
    \item[(c)] Since $i<n-2p$, we have $i+p < n-p+1$. We need $m_{i+p}\notin \sigma$ to verify \cref{prop:path_truegap} (c) for $m_i$ and $\sigma$. It suffices to show $m_{i+p}\notin \tau$ since $i+p < n-p+1$. If $i+p\leq n-2p$, then $m_{i+p}\notin \tau$  \cref{prop:path_truegap} (c) since $m_i$ is a true gap of $\tau$. If $i+p> n-2p$, then $m_{i+p}\notin \tau$  since $\tau$ is  a cell of $I_{p}(L_{n-(p+1)})$. So, $m_{i+p}\notin \tau$.
\end{enumerate}

Now, suppose $\tau$ is a critical cell of $I_p(L_{n-(p+1)})$. Let $\delta:= \tau \cup \{m_{n-p+1}\}$ and observe that $\delta$ is a cell of $I_p(L_n)$. Note that $\delta$ has no bridges. If it has a bridge $m_i$, then $m_i\neq m_{n-p+1}$ and $m_i \in \tau$. Since  $(n-p+1)-j\geq p+1$ for each $m_j\in \tau$ as $1\leq j \leq n-2p$, $m_i$ must be a bridge of $\tau$, contradiction. If $\delta$ has a true gap, say $m_i$, then $m_i$ does not dominate any bridges of $\delta$. So, we can apply \cref{prop:path_truegap}  to $m_i$ and $\delta$ to conclude that $m_i$ is a true gap of $\tau$ which leads to a contradiction.  Thus, $\delta$ is critical. We verify \cref{prop:path_truegap} (a)-(c) for $m_i$ and $\tau$ below:
\begin{enumerate}
    \item[(a)]  there exists $m_j \in \tau\cap M_i $ and $m_k \in \delta \cap M_{i+p-1} $ such that $j<i<k\leq i+p-1$ and $k-j \leq p$. Notice that $m_k \in \tau$ because if $k=n-p+1$, then $(n-p+1)-j \geq p+1$. So, $i<k\leq n-2p$ which means $m_i \in \tau$ and \cref{prop:path_truegap} (a) holds for $m_i$ and $\tau$.
    \item[(b)] $\delta \cap M_{i+p-1} = \{m_k \}=\tau  \cap M_{i+p-1} $ by part (b). So, \cref{prop:path_truegap} (b) holds for $m_i$ and $\tau$.
    \item[(c)] If $i+p \leq n-2p <n-p+1$, then $m_{i+p} \notin \delta$ implies that $m_{i+p} \notin \tau$. This means  \cref{prop:path_truegap} (c) holds for $m_i$ and $\tau$.
\end{enumerate}

\textbf{Case 3:} Suppose both \( m_{n-p+1} \) and \( m_{n-p} \) are in \( \sigma \). Our goal is to show the following:  $\sigma$ is a critical cell of $I_{p}(L_{n})$ if and only if $\sigma \backslash \{m_{n-p+1},m_{n-p}\}$ is a critical cell of $I_{p}(L_{n-(p+1)})$.

Its proof is almost identical to the proof of Case 2 and we only highlight the differences for the reader.  If $\sigma$ is a critical cell of  $I_{p}(L_{n})$, let $\tau:=\sigma \backslash \{m_{n-p+1}\}$. As in Case 2, $\tau$ is a cell of  $I_{p}(L_{n-(p+1)})$; otherwise,   $m_{n-p}$ is a bridge of of $\sigma$ by \cref{prop:bridge_gap}, which is not possible. The rest follows similarly as in this part of Case 2 and this completes the proof that $\tau$ is a critical cell of  $I_{p}(L_{n-(p+1)})$.

Let $\tau$ be a critical cell of $I_{p}(L_{n-(p+1)})$. Then  $\delta := \tau \cup \{m_{n-p}, m_{n-p+1}\}$  is a cell of $I_p(L_n)$.  We prove it is also a critical cell following the steps in the proof of Case 2. The only difference is in part (a) where we need to consider the possibility of $m_k=m_{n-p}$.  We recommend the reader to follow  along  part (a) of Case 2 to keep track of indexes that are referenced here. First notice that  $j<i\leq n-2p+1$  since $k\neq n-p+1$. If $m_k=m_{n-p}$, then $k-j \geq p$ as $j\leq n-2p$. Since $k-j\leq p$ by \cref{prop:path_truegap} (a), we must have $k-j=p$, indicating that $j=n-2p$ and $i=n-2p+1$. Then $m_{i+p}=m_{n-p+1} \in \delta$ which contradicts \cref{prop:path_truegap} (c) as we assumed $m_i$ is a true gap of $\delta$. Thus, $m_k\neq m_{n-p}$ which means $m_k\in \tau$. The rest of the proofs follows similarly to that of Case 2.
\end{proof}

Barile-Macchia resolutions are cellular, i.e., they are supported on CW complexes. We first remark that the dimension of the CW complex that supports the minimal resolution of a monomial ideal equals its projective dimension. In general, the minimal resolution of any monomial ideal of projective dimension $1$ is supported on a tree \cite[Theorem 1]{FH17}. In fact, in the cases where the path ideals of paths have projective dimension $1$, their minimal resolutions are supported on paths, which can be shown using the  techniques that will be employed in the next example. In what follows, we provide an example  where the path ideal of a path has projective dimension 2.

\begin{example}\label{ex:1}
Consider the path ideal \(I=I_p(L_{2p+1})\) under the total order 
\[ m_1\succ m_2\succ \cdots\succ  m_{p+2}. \]
For any subset \(\sigma = \{ m_{i_1}, \ldots, m_{i_k}\} \), each element of $\sigma$ (except \(m_{i_1}\) and \(m_{i_k}\)) is a bridge of \(\sigma\) due to \cref{prop:bridge_gap} (1). Consequently, \(m_{i_{k-1}}\) emerges as the smallest bridge of $\sigma$. Therefore,  the Barile-Macchia matching  of \(I\) with respect to \((\succ)\), denoted by \(A\), is achieved by removing the penultimate element at every iteration. Note that there is only one cell (of cardinality of at least 3) with no bridges:  \(\{m_1, m_{p+1}, m_{p+2}\}\). This results in the following list of all  critical cells of \(I\):
\[ \emptyset, \{m_1\}, \dots, \{m_{p+2}\}, \{m_1, m_2\}, \{m_2,m_3\}, \dots, \{m_{p+1}, m_{p+2}\}, \{m_{p+2}, m_1\}, \{m_1, m_{p+1}, m_{p+2}\}. \]
Given two distinct critical cells, \(\sigma\) and \(\sigma'\),  their least common multiples are different  by \cref{prop:content}. This uniqueness ensures that the Barile-Macchia resolution of \(I\) is minimal by \cref{thm:morseres}. 

By discrete Morse theory, there exists a cellular complex that supports this minimal free resolution, and it has $(p+2)$ many $0$-cells, $(p+2)$ many $1$-cells, and one $2$-cell. One obvious object that fits this description is the following $(p+2)$-gon:

\begin{center}
    \begin{tikzpicture}[scale = 2.5]
        \fill[fill = gray!30]
        (0, 0) -- (1, 0) -- (1.5, -0.7) -- (1, -1.4) -- (0, -1.4) -- (-0.5, -0.7) -- (0, 0);
        \draw[ultra thick, dotted]
        (-0.167, -1.167) -- (-0.333, -0.933);
        \draw
        (0, 0) -- node[above]{$\{m_1, m_2\}$} (1, 0) -- node[above right]{$\{m_2, m_3\}$} (1.5, -0.7) -- node[below right]{$\{m_3, m_4\}$} (1, -1.4) -- node[below]{$\{m_4, m_5\}$} (0, -1.4)
        (0, 0) -- node[above left]{$\{m_1, m_{p + 2}\}$} (-0.5, -0.7);
        \filldraw[fill = white]
        (0, 0) circle (1pt) node[above]{$\{m_1\}$}
        (1, 0) circle (1pt) node[above]{$\{m_2\}$}
        (1.5, -0.7) circle (1pt) node[right]{$\{m_3\}$}
        (1, -1.4) circle (1pt) node[below]{$\{m_4\}$}
        (0, -1.4) circle (1pt) node[below]{$\{m_5\}$}
        (-0.5, -0.7) circle (1pt) node[left]{$\{m_{p + 2}\}$};
        \draw
        (0.5, -0.6) node[below]{$\{m_1, m_{p + 1}, m_{p + 2}\}$};
    \end{tikzpicture}
\end{center}

Denote this  $(p+2)$-gon by $\Delta$. It follows from \cite[Lemma 2.2]{BPS98} that $\Delta$ supports a free resolution of $R/I$ if, for each monomial $m$, the subcomplex $\Delta[m] \coloneqq \{\sigma \in \Delta \colon \lcm(\sigma) \text{ divides } m\}$ of $\Delta$ is either empty or contractable. Notice that any  subcomplex $\Delta[m]$ is either empty, a line segment, or the $(p+2)$-gon itself for each squarefree monomial $m$. Since the last two objects are both contractable, $\Delta$ supports a free resolution of $R/I$.
\end{example}

\section{Minimal free resolutions of path ideals of cycles}\label{sec:cycle}

In this section, we turn our attention to path ideals of cycles, demonstrating that these ideals have minimal cellular resolutions. While in the preceding section we derived this outcome for path ideals of paths through identifying a minimal Barile-Macchia resolution of $R/I$ with respect to a specific total order on $\G(I)$, this approach falls short for path ideals of cycles. For instance, when we consider the edge ideal of a 9-cycle, it has no minimal Barile-Macchia resolutions as pointed out in \cite[Remark 4.23]{chau2022barile}.

In our investigation of path ideals of cycles, we transition our focus towards the generalized Barile-Macchia resolutions. These are Morse resolutions and can be considered as an extension of the  Barile-Macchia resolutions, introduced in \cite{chau2022barile}. The crux of these resolutions lies in utilizing a collection of total orders on $\G(I)$, instead of one. For the reader's convenience, we restate the construction of generalized Barile-Macchia resolutions from \cite{chau2022barile} along with a theorem stating that they induce cellular free resolutions. First, recall that $G_X$ denotes the directed graph obtained from the Taylor complex of $I$.
 
\begin{theorem}\label{thm:genBM}\cite[Theorem 5.19]{chau2022barile}
    For a monomial $u \in R$, let $G_u$ be the induced subgraph of $G_X$ on the vertices $\sigma \subseteq \G(I)$ where $\lcm (\sigma)=u$. Consider a total order $(\succ_u)$ on $\G(I)$ for each monomial $u\in R$. 
    
    Let $A$ be the  union of all $A_u$, where $A_u$ is the collection of directed edges obtained by applying the Barile-Macchia Algorithm to $G_u$ with respect to $(\succ_u)$ for each monomial $u\in R$. Then, $A$ is a homogeneous acyclic matching of $I$. The Morse resolution induced by $A$ is called the \textbf{generalized Barile-Macchia} resolution of $R/I$ with respect to $(\succ_u)_{u\in R}$.
\end{theorem}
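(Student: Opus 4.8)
This is \cite[Theorem 5.19]{chau2022barile}; here is the plan of proof. The strategy is to check directly that $A=\bigcup_{u\in R}A_u$ satisfies the three conditions of \cref{def:acyclicmatch}, and then invoke \cref{thm:morseres} to obtain the cellular resolution $\F_A$. Homogeneity is immediate: every edge of $A$ lies in some $A_u$, hence is an edge of $G_u$ joining two cells whose $\lcm$ equals $u$. For the matching condition, note that each cell $\sigma\subseteq\G(I)$ is a vertex of exactly one $G_u$, namely for $u=\lcm(\sigma)$. The Barile-Macchia Algorithm applied to $G_u$ returns a matching of $G_u$ -- here one uses that $G_u$ is closed under the operation $\tau\mapsto\tau\setminus\sbridge(\tau)$, since deleting a bridge does not change the $\lcm$, so \cref{algorithm1} never leaves $G_u$ -- and therefore $\sigma$ appears in at most one edge of $A_u$ and in no edge of any $A_w$ with $w\neq u$.

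The heart of the argument is acyclicity. Suppose $G_X^A$ contained a directed cycle. By the standard description of directed cycles in $G_X^A$ for a matching $A$, such a cycle has the form
\[
\sigma_1 \to \tau_1 \to \sigma_2 \to \tau_2 \to \cdots \to \sigma_t \to \tau_t \to \sigma_1,
\]
where each $\sigma_i\to\tau_i$ is an unreversed edge (so $\tau_i\subsetneq\sigma_i$ with $|\tau_i|=|\sigma_i|-1$) and each $\tau_i\to\sigma_{i+1}$ is a reversed edge, i.e. $(\sigma_{i+1},\tau_i)\in A$ (so $\tau_i\subsetneq\sigma_{i+1}$ with $|\sigma_{i+1}|=|\tau_i|+1$); in particular all the $\sigma_i$ have one common cardinality and all the $\tau_i$ have one common cardinality. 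Now $\lcm(\tau_i)\mid\lcm(\sigma_i)$ because $\tau_i\subseteq\sigma_i$, while $\lcm(\tau_i)=\lcm(\sigma_{i+1})$ by homogeneity of $A$. Chaining these relations around the cycle shows that $\lcm(\sigma_1)$ is divisible by $\lcm(\sigma_2)$, which is divisible by $\lcm(\sigma_3)$, and so on back to $\lcm(\sigma_1)$; hence all the $\lcm(\sigma_i)$, and therefore all the $\lcm(\tau_i)$, equal a single monomial $u$. Consequently the entire cycle lies in $G_u$. But the edges of $A$ among the vertices of $G_u$ are exactly the edges of $A_u$ (an edge of $A_w$ with $w\neq u$ joins cells whose $\lcm$ is $w$, so it cannot lie in $G_u$), so the cycle is a directed cycle in $(G_u)^{A_u}$ -- contradicting the acyclicity of the matching produced by the Barile-Macchia Algorithm, whose proof (as given for $G_X$ in \cite{chau2022barile}) applies verbatim with $G_u$ in place of $G_X$. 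Hence $G_X^A$ is acyclic, $A$ is a homogeneous acyclic matching, and \cref{thm:morseres} yields the generalized Barile-Macchia resolution $\F_A$.

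The delicate point -- the main obstacle -- is the final one: one must verify that the proof that \cref{algorithm1} produces an acyclic matching transfers to $G_u$. This amounts to observing that that proof uses only features shared by $G_X$ and $G_u$: that \cref{algorithm1} processes cells in order of decreasing cardinality, that it compares smallest bridges with respect to the fixed total order, and that the ambient vertex set is closed under $\tau\mapsto\tau\setminus\sbridge(\tau)$ -- none of which is special to the full Taylor complex. A secondary point worth spelling out carefully is the structural description of directed cycles in $G_X^A$ together with the resulting divisibility chain, since it is exactly this that confines a hypothetical cycle to a single $G_u$ and lets us reduce to the already-established acyclicity of $A_u$.
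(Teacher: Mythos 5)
The paper gives no proof of this statement; it is imported verbatim as \cite[Theorem 5.19]{chau2022barile}, so there is no in-paper argument to compare yours against. Judged on its own, your proof is correct and well organized. Homogeneity is as immediate as you say. The matching condition rests on the two observations you make explicit: distinct $G_u$'s have disjoint vertex sets, and $G_u$ is closed under bridge-deletion (removing a bridge preserves the lcm), so \cref{algorithm1} restricted to $G_u$ never leaves $G_u$ and produces a matching supported on $V(G_u)$. For acyclicity, the strict up/down alternation of a hypothetical cycle in $G_X^A$ (forced because two consecutive reversed edges would violate the matching condition), combined with the divisibility chain $\lcm(\sigma_{i+1})=\lcm(\tau_i)\mid\lcm(\sigma_i)$ closing around the cycle, correctly confines any cycle to a single lcm-class and hence to $(G_u)^{A_u}$. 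You rightly flag the one dependency that remains: that the acyclicity proof for \cref{algorithm1} from \cite{chau2022barile} applies with $G_u$ in place of $G_X$. Your justification --- that the argument relies only on the decreasing-cardinality processing order, bridge comparison under a fixed total order, and closure under bridge-deletion, none of which is special to the full Taylor complex --- is the right one. A fully self-contained writeup would either cite the specific acyclicity lemma from \cite{chau2022barile} or observe that the original acyclicity argument already reduces, via the same divisibility chain, to cycles lying inside one lcm-class, so the statement for $G_u$ is not genuinely more general than what is already proved there.
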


Consider a cycle $C_n$ on $n$ vertices $\{x_1,\ldots, x_n\}$. Let \(R = \Bbbk[x_1, \ldots, x_n]\). The \(p\)-path ideal of \(C_n\), denoted as $I_p(C_n)$, is generated by monomials in $R$ corresponding to paths on $p$ vertices along $C_n$ where
\[I_p(C_n) = (x_1\cdots x_p, \ldots, x_n \cdots x_{p-1}).\]
Let \(m_i = x_i \cdots x_{i+p-1}\) for each  \(1\leq i\leq  n\) and consider the indices modulo $n$.   For the remainder of this section, we denote the  minimal generating set of $I_p(C_n)$ by $\G$ where
$$\G=(m_1, \dots, m_n).$$  
We restrict our attention to \(2 \leq p \leq n\) and  begin our discussion with an observation on the construction of the set \(A\), as outlined in  \cref{thm:genBM}, leveraging our previous findings on path ideals of paths.

\begin{observation}\label{obs:0}
Let \(A\) be a homogeneous acyclic matching constructed in accordance with \cref{thm:genBM} for the ideal \(I_p(C_n)\). To obtain a generalized Barile-Macchia resolution of \(R/I_p(C_n)\), it is necessary to determine its \(A\)-critical cells. 

\begin{enumerate}
    \item The following is immediate from the definition of $A$:
    \[
    \{A\text{-critical cells} \} = \bigcup_{u: \text{ monomial in } R}\{ A_u\text{-critical cells} \}.
    \] 
    \item Assuming \(u \neq x_1\cdots x_n\), remember that each vertex of \(G_X\) is a cell of $I_p(C_n)$, namely, a subset of \(\G\). For an induced subgraph $G_u$ of $G_X$,  its vertex set $V(G_u)$  is  empty or every vertex of \(G_u\) is a subset of \(\G(J)\) where \(J\) is the path ideal of some path. When \(V(G_u)\) is non-empty,  \cref{prop:content} assures the existence of a total order \( (\succ) \) on \(\G(J)\) that allows for precisely one \(A\)-critical cell.
\end{enumerate}

In light of the above observation, our attention is on the \(A_u\)-critical cells where \(u = x_1\cdots x_n\) to derive a minimal generalized Barile-Macchia resolution of \(R/I_p(C_n)\).
\end{observation}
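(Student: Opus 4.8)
The plan is to handle the two parts of the observation in turn: the first is purely formal, and the second is a reduction to the results of \cref{sec:min_path}.

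\textbf{Part (1).} Every cell $\sigma$ of $I_p(C_n)$ has a unique least common multiple, so the vertex sets $V(G_u)$, as $u$ ranges over the monomials of $R$, partition the collection of all cells; moreover every edge of $A_u$ joins two cells of $V(G_u)$, because the edges produced by \cref{algorithm1} have the form $(\tau,\tau\setminus\sbridge(\tau))$ and removing a bridge does not change the $\lcm$. Hence a cell $\sigma$ with $\lcm(\sigma)=u$ lies in some edge of $A=\bigcup_u A_u$ if and only if it lies in some edge of $A_u$; that is, $\sigma$ is $A$-critical if and only if it is $A_u$-critical. Taking the union over $u$ yields the displayed identity.

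\textbf{Part (2), the reduction.} Let $u\neq x_1\cdots x_n$ and suppose $V(G_u)\neq\emptyset$; pick a variable $x_k$ not dividing $u$. For any $\sigma\in V(G_u)$ no generator in $\sigma$ is divisible by $x_k$, and since $x_k\mid m_i$ exactly when $i\in\{k-p+1,\dots,k\}$ (indices taken modulo $n$), it follows that $\sigma\subseteq\{m_{k+1},m_{k+2},\dots,m_{k+n-p}\}$. Relabelling $y_j:=x_{k+j}$ for $1\le j\le n-1$, these $n-p$ monomials are precisely the minimal generators of $J:=I_p(L_{n-1})$, the $p$-path ideal of the path on $y_1,\dots,y_{n-1}$ obtained from $C_n$ by deleting $x_k$. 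Thus every vertex of $G_u$ is a subset of $\G(J)$, and in fact $G_u$ together with its $\lcm$-labels is exactly the induced subgraph of the Taylor complex of $J$ on the cells whose $\lcm$ equals $u$.

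\textbf{Part (2), counting the critical cells.} I would then choose $(\succ_u)$ so that its restriction to $\G(J)$ is the path order $m_{k+1}\succ m_{k+2}\succ\cdots\succ m_{k+n-p}$ of \cref{sec:min_path}. The point to verify is that running \cref{algorithm1} on $G_u$ with $(\succ_u)$ produces the same edge set as the restriction to $V(G_u)$ of the Barile-Macchia matching of $J$ built from the full Taylor complex of $J$ with the path order: in both loops of \cref{algorithm1}, the only cells that can influence the fate of a cell of $V(G_u)$ are the ones obtained from it by adding or removing a bridge, or sharing a matched face with it, and these all remain in $V(G_u)$ because adding or removing a bridge preserves the $\lcm$. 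Consequently the $A_u$-critical cells are exactly the Barile-Macchia-critical cells of $J$ whose $\lcm$ equals $u$. By \cref{thm:path_friendly_min}, $J$ is bridge-friendly, so by \cref{prop:content} distinct critical cells of $J$ have distinct least common multiples; hence at most one of them has $\lcm$ equal to $u$, which is the uniqueness claimed (if none does, $u$ simply contributes no critical cell, hence nothing to the resolution). Combining the two parts, every $A$-critical cell $\sigma$ with $\lcm(\sigma)\neq x_1\cdots x_n$ is determined by $\lcm(\sigma)$ and therefore cannot create a minimality obstruction in the sense of \cref{thm:morseres}; what remains, and is the main content of the rest of the section, is to describe the $A_u$-critical cells with $u=x_1\cdots x_n$. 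The step I expect to be the real obstacle is this last compatibility assertion — that the Barile-Macchia Algorithm, restricted to a single $\lcm$-class, behaves identically whether it is run on $G_u$ in isolation or inside the full Taylor complex of $J$ — since it is exactly what allows \cref{thm:path_friendly_min} and \cref{prop:content} to be imported verbatim.
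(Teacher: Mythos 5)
Your argument is correct and takes essentially the route the observation intends: part (1) is exactly the lcm-class locality of the Barile-Macchia algorithm guaranteed by homogeneity, and part (2) cuts the cycle at a variable not dividing $u$, identifies $V(G_u)$ with an lcm-class of cells of the path ideal $J$, chooses $(\succ_u)$ to restrict to the path order, and imports \cref{thm:path_friendly_min} and \cref{prop:content}; the compatibility step you flag as the potential obstacle is indeed routine for the reason you give, since matched pairs, the removals from $\Omega$, and the cleanup loop of \cref{algorithm1} only ever involve cells with the same lcm. One small point in your favor: your conclusion of ``at most one'' $A_u$-critical cell is the accurate statement (e.g., $u=x_1x_2x_3x_4$ for $I_2(C_5)$ has $V(G_u)\neq\emptyset$ but no critical cell, since $\{m_1,m_2,m_3\}$ is matched with $\{m_1,m_3\}$), and this weaker form is all the minimality argument later in the section actually needs.
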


For the remainder of this section, let $u = x_1 \cdots x_n$ and adopt the following total order $(\succ)$ for ~$u$: 
\[ m_1 \succ m_2 \succ \cdots \succ m_n. \]
Our primary objective is to demonstrate that all $A_u$-critical cells have the same cardinality. Consequently,  the resulting generalized Barile-Macchia resolution is minimal by \cref{thm:morseres} and \cref{obs:0}.

As a preliminary step, we examine the structure of  $\sigma \in V(G_u)$. Recall that $\lcm (\sigma) = u$ for each vertex $\sigma$ in $G_u$. For the remainder of this section, we express $\sigma$ based on the total order $(\succ)$. In other words, when $\sigma=\{m_{i_1},\dots, m_{i_t} \}$ we have $ m_{i_1} \succ \dots \succ m_{i_t}$; equivalently, $i_1<\cdots < i_t$.

\begin{proposition}\label{prop:verticesGm}
  Let $\sigma=\{m_{i_1},\dots, m_{i_t} \}$ be a vertex of $ G_u$. Then  the distance between consecutive elements of $\sigma$ is at most $p$. This means $i_{j+1} - i_j \leq p$ for all $j \in \{1,\ldots, t-1\}$ and $(i_1 + n)- i_t \leq p$. 
\end{proposition}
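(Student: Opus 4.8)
The plan is to prove both directions by analyzing when a least common multiple of a subset of $\G(I_p(C_n))$ equals the full monomial $u = x_1\cdots x_n$, using the cyclic indexing. The key observation is that $\lcm(\sigma)$ captures exactly which variables appear, and each generator $m_i = x_i x_{i+1}\cdots x_{i+p-1}$ contributes the ``window'' of $p$ consecutive variables (cyclically) starting at position $i$.

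First I would set up notation: write $\sigma = \{m_{i_1}, \dots, m_{i_t}\}$ with the indices listed in cyclic order, say $1 \le i_1 < i_2 < \cdots < i_t \le n$. For the forward direction, suppose $\lcm(\sigma) = u$, so every variable $x_1, \dots, x_n$ divides $\lcm(\sigma)$. For contradiction, suppose some cyclic gap exceeds $p$, i.e. either $i_{j+1} - i_j > p$ for some $j \in \{1, \dots, t-1\}$, or $i_1 + n - i_t > p$. In the first case, consider the variable $x_{i_j + p}$ (indices mod $n$): I claim no generator in $\sigma$ is divisible by it. Indeed $m_{i_l}$ is divisible by $x_{i_j+p}$ iff $i_l \le i_j + p \le i_l + p - 1$ in the cyclic sense, i.e. iff $i_j + 1 \le i_l \le i_j + p$; but by the ordering and the gap assumption, the smallest index exceeding $i_j$ is $i_{j+1} > i_j + p$, and indices $\le i_j$ are too small (they would need $i_l \ge i_j+1$). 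So $x_{i_j+p}$ does not divide $\lcm(\sigma)$, contradicting $\lcm(\sigma) = u$. The wrap-around gap $i_1 + n - i_t > p$ is handled identically by considering $x_{i_t + p \bmod n}$. Care must be taken that $i_j + p$ is interpreted modulo $n$ and that $n \ge p$ ensures this is a genuine vertex of the cycle; since $p \le n$ this is fine, and one should double-check the edge case where $t=1$, where the single wrap-around condition $i_1 + n - i_1 = n \le p$ forces $n = p$ and $\sigma = \{m_i\}$, which indeed has $\lcm = u$.

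For the converse, suppose $\sigma = \{m_{i_1}, \dots, m_{i_t}\}$ has all cyclic consecutive gaps at most $p$. I must show every $x_k$ divides $\lcm(\sigma)$, equivalently that $k$ lies in the window $[i_l, i_l + p - 1]$ (cyclically) for some $l$. Given $k$, let $i_l$ be the largest element of $\{i_1, \dots, i_t\}$ that is $\le k$ in the appropriate cyclic sense — more precisely, walking backward cyclically from $k$, let $i_l$ be the first index of $\sigma$ encountered. Then the next index $i_{l+1}$ (cyclically) satisfies $i_{l+1} \le i_l + p$ by hypothesis, and $k$ lies strictly between $i_l$ and $i_{l+1}$ (or equals $i_l$), hence $i_l \le k \le i_{l+1} - 1 \le i_l + p - 1$, so $x_k \mid m_{i_l} \mid \lcm(\sigma)$. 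Thus $\lcm(\sigma) = x_1 \cdots x_n = u$.

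The main obstacle I anticipate is purely bookkeeping: handling the cyclic wrap-around cleanly so that statements like ``$i_l \le k \le i_l + p - 1$ cyclically'' are unambiguous, and making sure the two boundary conditions in the proposition ($i_{j+1}-i_j \le p$ for interior gaps and $i_1 + n - i_t \le p$ for the wrap gap) together are genuinely equivalent to ``all $t$ cyclic gaps are $\le p$''. A convenient device is to introduce cyclic differences $d_j := i_{j+1} - i_j$ for $1 \le j \le t-1$ and $d_t := i_1 + n - i_t$, note $\sum_{j=1}^t d_j = n$, and phrase everything in terms of the $d_j$; then the whole argument reduces to: $\lcm(\sigma) = u$ iff $d_j \le p$ for all $j$. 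Once the cyclic indexing is pinned down, each direction is a short argument of the type already used in \cref{lem:tiny} and \cref{prop:bridge_gap}, so I would cite the analogy but still write out the gap-variable argument explicitly since the cyclic setting differs slightly from the path setting.
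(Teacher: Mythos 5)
Your proof is correct and, for the direction the proposition actually asserts, it is essentially the paper's argument: assume some cyclic gap exceeds $p$ and exhibit a variable ($x_{i_j+p}$ or $x_{i_t+p}$ mod $n$) that no generator in $\sigma$ covers, contradicting $\lcm(\sigma) = u$. The paper states this in one sentence; you spell out the bookkeeping that the window $\{i_j+1,\ldots,i_j+p\}$ misses every $i_l$, which is fine and matches what the paper leaves implicit.

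One small mismatch worth flagging: the proposition is stated as a one-way implication ($\sigma \in V(G_u)$ implies the gap bound), and the paper proves only that direction. You also prove the converse (gap bound implies $\lcm(\sigma)=u$). That converse is true and harmless, and it would even be useful elsewhere, but it is extra work not required by the statement, so be aware you've proved a slightly stronger result than claimed. Everything else, including the $t=1$ edge case and the cyclic-difference reformulation $\sum d_j = n$, is a clean way to organize the wrap-around argument and introduces no gaps.
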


\begin{proof}
    Suppose that either $i_{j+1} - i_j > p$ for some $j \in \{1,\ldots, t-1\}$ or $i_1 + n - i_t > p$. In the first case, we have $m_{i_j}= x_{i_j} \cdots x_{i_j+p-1}$ and $m_{i_{j+1}}= x_{i_{j+1}} \cdots x_{i_{j+1}+p-1}$ where $i_{j+1}>i_j+p$. This  implies that $x_{i_j+p}$  does not divide $\lcm(\sigma)=u=x_1\cdots x_n$ which is not possible. The other case follows similarly by arguing  $x_{i_t+p}$ does not divide $u$, leading to contradiction.
\end{proof}

Unlike paths in the previous section,  every critical cell contains $m_n$ for cycles. 

\begin{lemma}\label{prop:Am-critical}
Monomial $m_n$ is contained in each  $A_u$-critical cell.
\end{lemma}

\begin{proof}
Let $\sigma$ be an $A_u$-critical cell. If $m_n \notin \sigma$, then  $m_n$ is a gap of $\sigma$ since $\lcm(\sigma)=\lcm (\sigma \cup m_n)$. Indeed, any $m\notin \sigma$ is a gap due to same reasoning. Notice that this means $m_n$ is a bridge of $\sigma \cup m_n$ and we have $\sbridge (\sigma \cup m_n) =m_n$  since $m_n$ is the smallest monomial among those in $\G$ with respect to $(\succ)$. It follows from \cref{rem:pot-type-2} that $\sigma \cup m_n$ is a potentially-type-2 cell. In fact, this cell is type-2 since there is no cell $\tau$ such that $\tau \backslash \sbridge(\tau)= \sigma$ with $m_n \succ  \sbridge(\tau)$. Then, the directed edge $(\sigma \cup m_n, \sigma)$ is contained in  $A_u$ which means  $\sigma$ cannot be a critical cell, a contradiction. Thus, $m_n \in \sigma$.
\end{proof}

\begin{notation}\label{rem:not_cycle}
    Similar to  \cref{not:Ms} for paths, define \( M_i \) to be the collection of all monomials in \( \G \) that are divisible by \( x_i \) for each $1\leq i\leq n$. Note that  \( |M_i| = p \) for each value of \( i \) and 
    $$
    M_i =
    \begin{cases}
       \{m_{i-p+1}, \ldots, m_i\} & \text{ if } i\geq p,\\
       \{ m_{(n+i) -p+1}, \ldots, m_n, m_1, \ldots, m_i \} &\text{ if } 1\leq i<p.
    \end{cases}$$
\end{notation}

In the subsequent discussion, we  analyze the vertices of \(G_u\) and describe their bridges, gaps, and true gaps. Analogous to the path case, we can classify the bridges in a similar manner. The proof is omitted since it directly follows from \cref{lem:tiny}, keeping in mind that indices are now considered modulo \(n\). First, we consider gaps as their classification is immediate.

\begin{proposition}\label{prop:cycle_gap}
  Let $\sigma \in V(G_u)$. A monomial $m_i$ is a gap of $\sigma$ if and only if $m_i \notin \sigma$.
\end{proposition}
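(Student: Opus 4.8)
The plan is to unwind the definition of a gap directly, using \cref{prop:verticesGm} to supply the structural fact that makes the $\lcm$ condition automatic. Recall that $m_i$ is a gap of $\sigma$ precisely when $m_i \notin \sigma$ and $\lcm(\sigma \cup m_i) = \lcm(\sigma)$. One direction is trivial: if $m_i$ is a gap of $\sigma$, then by definition $m_i \notin \sigma$, and there is nothing further to check.

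For the reverse direction, suppose $m_i \notin \sigma$; we must show $\lcm(\sigma \cup m_i) = \lcm(\sigma)$. Since $\sigma \in V(G_u)$, we have $\lcm(\sigma) = u = x_1 \cdots x_n$ by definition of $G_u$. But $m_i = x_i x_{i+1} \cdots x_{i+p-1}$ (indices mod $n$) clearly divides $x_1 \cdots x_n$, so $\lcm(\sigma \cup m_i) = \lcm(\lcm(\sigma), m_i) = \lcm(x_1\cdots x_n, m_i) = x_1 \cdots x_n = \lcm(\sigma)$. Hence $m_i$ is a gap of $\sigma$.

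In fact this shows the statement is essentially immediate: the only content is the observation that $u = x_1 \cdots x_n$ is the maximal possible multidegree among all subsets of $\G(I_p(C_n))$ (each $m_i$ is squarefree in the variables $x_1, \dots, x_n$), so \emph{every} monomial $m_i$ not already in $\sigma$ satisfies the $\lcm$-invariance condition. There is no real obstacle here; the proof is a two-line verification, and the role of \cref{prop:verticesGm} (or simply the defining property of $V(G_u)$) is just to pin down $\lcm(\sigma)$. I would therefore write the proof compactly, perhaps even inline, emphasizing only the point that divisibility of $m_i$ into $x_1 \cdots x_n$ is what forces the gap condition to hold for all $m_i \notin \sigma$.
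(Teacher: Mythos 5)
Your argument is correct and is exactly the paper's proof: since $\lcm(\sigma)=u=x_1\cdots x_n$ by definition of $G_u$, and every generator $m_i$ divides $u$, adding $m_i$ to $\sigma$ cannot change the $\lcm$, so the gap condition reduces to $m_i\notin\sigma$. Nothing further is needed.
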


\begin{proof}
   Since $\lcm (\sigma) = x_1\cdots x_n$,  adding $m_i$ to $\sigma$ does not change the least common multiple for any $m_i \notin \sigma$. So, any $m_i$ that is not contained in $\sigma$ is a gap.
\end{proof}

\begin{proposition}\label{cor:cycle_bridge}
    Let $\sigma \in V(G_u)$. Then the monomial $m_i$ is a bridge of $\sigma$ if and only if $m_i \in \sigma$ and there exist monomials $m_j$ and $m_k$ in $\sigma\setminus m_i$  for $1\leq j<k\leq n$ such that the distance between these two monomials along $m_i$ is at most $p$, i.e., $k-j \leq p$ when $j < i < k$ and $(j+n)-k \leq p$ when $j<k<i<j+n$. 
\end{proposition}

\begin{figure}[ht]
    \centering
    \includegraphics[width=0.5\linewidth]{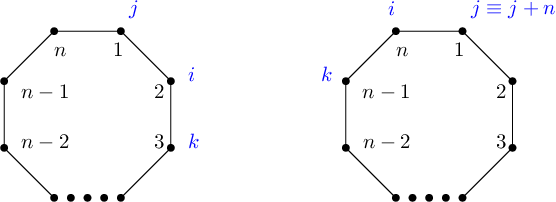}
    \caption{Only possible orderings of $i,j,k$ along $C_n$: $j<i<k$ (left) and  $j<k<i<j+n$ (right).}
    \label{fig:1}
\end{figure}
\begin{proof}
 If $m_i$ is a bridge of $\sigma$, one can apply the steps in the proof of \cref{lem:tiny} to $\sigma \setminus m_i$ (while keeping in mind that indices are now modulo $n$) and  obtain consecutive monomials  $m_j$ and $m_k$ in $\sigma\setminus m_i$  where either $j<i<k$ or $k<j<i<k+n$. Since  $\sigma\setminus m_i$ is still a vertex of $G_u$, distance between consecutive elements of $\sigma\setminus m_i$ is at most $p$ by \cref{prop:verticesGm}. For the other direction, if the conditions are met, $\sigma \setminus m_i$ is still a vertex of $G_u$ and $m_i$ divides $\lcm(m_j,m_k)$. Since $\lcm(m_j,m_k)$ divides $\lcm(\sigma \setminus m_i)$, monomial $m_i$ is a bridge of $\sigma$.
\end{proof}

\begin{remark}
    When we use  \cref{cor:cycle_bridge} in the rest of the paper, in order to make clear which monomials serve as $m_j,m_i,m_k$, we might refer to them as a triple $(m_j,m_i,m_k)$ where $m_i$ is the bridge in this triple.  It is possible to have $j<i<k$ or $k<j<i<k+n$ for this triple. Here, $m_j$ is closer to $m_n$ from the left and $m_k$ is closer to $m_n$ from the right. 
\end{remark}

We now turn our attention to characterizing the true gaps for vertices in $G_u$ that are $A_u$-critical.

\begin{proposition}\label{prop:cycle_truegap}
     Let $\sigma\in V(G_u)$ be an $A_u$-critical cell and let $m_i \in \G$. Assume $m_i$ does not dominate any bridge of $\sigma$. Then $m_i$ is a true gap of $\sigma$ if and only if the following statements hold:
        \begin{enumerate}
            \item[(a)] $m_i \notin \sigma$. 
            \item[(b)] If there exists a monomial $m_k \in \sigma \cap M_{i+p-1}$, then  $ \sigma \cap M_{i+p-1} =\{m_k\}$ and  $m_{i+p}\notin \sigma$.
            \item[(c)] If $1\leq i \leq p-1$ and $\sigma \cap \{m_1,\ldots, m_{i-1}\}=\emptyset$, then $\sigma \cap M_i = \{ m_n\}$ and  $m_{n+i-p}\notin \sigma$.
        \end{enumerate} 
\end{proposition}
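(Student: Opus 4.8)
The plan is to mirror the strategy of \cref{prop:path_truegap}: characterize a true gap $m_i$ of $\sigma$ by combining the definition of true gap with the fact (from \cite[Proposition 2.21]{chau2022barile}) that, since $m_i$ dominates no bridge of $\sigma$, $m_i$ being a true gap is equivalent to $m_i$ being a gap of $\sigma$ with $\sbridge(\sigma\cup m_i)=m_i$. By \cref{prop:cycle_gap}, condition (a) is exactly the statement that $m_i$ is a gap of $\sigma$, so throughout we work under (a) and translate ``$\sbridge(\sigma\cup m_i)=m_i$'' into the concrete conditions (b) and (c) using the bridge characterization \cref{cor:cycle_bridge} and \cref{prop:Am-critical} (which guarantees $m_n\in\sigma$).

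For the forward direction, assume $m_i$ is a true gap. First I would observe that the only candidates for a bridge $m_t$ of $\sigma\cup m_i$ with $t\ne i$ and $m_i\succ m_t$ (i.e.\ $t>i$) are forced — as in the path proof — to have $m_i$ itself playing the role of one of the two ``flanking'' monomials $m_j,m_k$ of \cref{cor:cycle_bridge}, because $m_t$ is not a bridge of $\sigma$. Since $t>i$, the monomial $m_i$ must be the \emph{lower} flank, so $m_t\in M_{i+p-1}$ and the upper flank of $m_t$ lies in $\sigma$ within distance $p$ of $m_t$. This yields (b): if some $m_k\in\sigma\cap M_{i+p-1}$, a second such monomial or the presence of $m_{i+p}\in\sigma$ would make that monomial (or $m_k$) a bridge of $\sigma\cup m_i$ that is dominated by $m_i$, contradicting $\sbridge(\sigma\cup m_i)=m_i$. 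For (c), the subtlety is the wrap-around: when $1\le i\le p-1$, a bridge $m_t$ of $\sigma\cup m_i$ with $t>i$ could also be some $m_t\in M_i$ with $i<t\le n$ flanked below by $m_i$ via the ``otherwise'' case $j+n-k\le p$ of \cref{cor:cycle_bridge}; ruling these out — using $m_n\in\sigma$ from \cref{prop:Am-critical} and the hypothesis that $\sigma\cap\{m_1,\dots,m_{i-1}\}=\emptyset$ — forces $m_n$ to be the unique element of $\sigma\cap M_i$ and forces $m_{i-p}\notin\sigma$.

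For the reverse direction, assume (a), (b), (c) and suppose for contradiction that $m_i$ is not a true gap; then by \cite[Proposition 2.21]{chau2022barile} and (a) we get $m_i\succ m_t\coloneqq\sbridge(\sigma\cup m_i)$, so $t>i$, and since $m_i$ dominates no bridge of $\sigma$, $m_t$ is a bridge of $\sigma\cup m_i$ but not of $\sigma$. By \cref{cor:cycle_bridge}, $m_t\in\sigma$ and it is flanked by two monomials of $\sigma\cup m_i$ within distance $p$; because $m_t$ is not a bridge of $\sigma$, one flank must be $m_i$, and since $i<t$ the only possibilities are: $m_t\in M_{i+p-1}$ with the other flank in $\sigma$ (forcing, by (b), that flank to be $m_{i+p}$, hence $m_{i+p}\in\sigma$, contradicting (b)); or, in the wrap-around regime $1\le i\le p-1$ with $i<t\le n$, $m_t\in M_i$ flanked below by $m_i$ in the ``otherwise'' case (contradicting (c), which says $m_n$ is the only element of $\sigma\cap M_i$ — note $t\ne n$ since $m_n$ is not dominated-below-flankable here — or that $m_{i-p}\notin\sigma$). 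Either way we reach a contradiction, so $m_i$ is a true gap.

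I expect the main obstacle to be the careful bookkeeping of the wrap-around case in \cref{cor:cycle_bridge}(2)(b): one must verify that every way a ``new'' bridge $m_t$ dominated by $m_i$ could arise in the cyclic setting is captured by exactly one of (b) or (c), and in particular that $m_n\in\sigma$ (from \cref{prop:Am-critical}) together with the emptiness hypothesis in (c) genuinely closes off the $t=n$ and $t$-near-$n$ subcases. Once the bridge analysis is organized by the position of $t$ relative to $i$ and to the ``seam'' at $n{+}1\equiv 1$, each subcase reduces to the same local distance computation already used in \cref{prop:path_truegap}, so the remaining verifications are routine.
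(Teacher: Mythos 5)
Your overall approach is exactly the paper's: reduce via \cite[Proposition~2.21]{chau2022barile} to analyzing $\sbridge(\sigma\cup m_i)$, invoke \cref{prop:cycle_gap}, \cref{cor:cycle_bridge}, and \cref{prop:Am-critical}, then split the ``new bridge $m_t$'' analysis according to whether the flanking pair $(m_{j_t},m_{k_t})$ is ordinary ($j_t<t<k_t$) or wraps around. Your reverse direction helpfully fuses the paper's two $M_{i+p-1}$ subcases (ordinary with $j_t=i$, and wrap-around with $k_t=i$) into one, which is a clean simplification.

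There is, however, a genuine misstep in the last subcase of your reverse direction (the wrap-around regime with $j_t=i$ and $j_t<k_t<t$). You assert ``note $t\neq n$,'' but this is false: in this subcase one derives $k_t\geq i+n-p$, hence $t>k_t$ forces $t\in\{i+n-p+1,\ldots,n\}$, so $m_t\in M_i\cap\sigma$; applying condition (c) then \emph{forces} $m_t=m_n$, i.e.\ $t=n$. The contradiction must come instead from the other flank: $m_{k_t}\in\sigma$ with $i+n-p\leq k_t<n$, and again by (c) (uniqueness of $m_n$ in $\sigma\cap M_i$) one is pushed to $k_t=i+n-p$, i.e.\ $m_{k_t}=m_{i-p}\in\sigma$, violating the last clause of (c). Your parenthetical ``or that $m_{i-p}\notin\sigma$'' gestures at this, but as written the logic is inverted. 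A second, smaller omission: before invoking (c) here you must verify its hypothesis $\sigma\cap\{m_1,\ldots,m_{i-1}\}=\emptyset$; the paper gets this for free by choosing $m_{j_t},m_{k_t}$ to be the \emph{closest} elements of $\sigma$ to $m_t$ (cf.\ \cref{rem:closest_j_k_cycle}), a point your sketch should make explicit. With these two local repairs, the rest of your outline fills in routinely along the lines you describe.
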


\begin{proof}
  Suppose $m_i$ does not dominate any bridges of $\sigma$. Recall from \cref{prop:Am-critical} that  $m_n \in \sigma$ as $\sigma$ is an $A_u$-critical cell.  Lastly, as in the proof of \cref{prop:path_truegap}, it is useful to recall  \cite[Proposition 2.21]{chau2022barile}:  \( m_i \) is a true gap of $\sigma$ that does not dominate any bridges of $\sigma$ if and only if  \( m_i \)  is a gap of \( \sigma \) and \( \sbridge(\sigma \cup m_i) = m_i \).

    For the forward direction, assume $m_i$ is a true gap of $\sigma$. So, \( \sbridge(\sigma \cup m_i) = m_i \) from the previous paragraph.  First, it is immediate that $m_i$ is a gap of $\sigma$ which means $m_i \notin \sigma$ by \cref{prop:cycle_gap}. So, (a) holds.     For (b), if there are two monomials $m_k,m_t \in \sigma \cap M_{i+p-1}$, then we can reorder the vertices of $C_n$ so that $i<k,t\leq  i+p-1$. As in the proof of \cref{prop:path_truegap} (b), we can assume $k<t$ and  conclude that $m_t$ is a bridge of $\sigma \cup m_i$ where $m_i\succ m_t$, which contradicts to \( \sbridge(\sigma \cup m_i) = m_i \). For the second part of (b), notice that,  if $m_{i+p}\in \sigma$, there exists $m_j \in \sigma$ where $i<j<i+p$ by \cref{prop:verticesGm} (by reordering vertices, if needed). This means $m_j$ is a bridge of $\sigma \cup m_i$ with $m_i\succ m_j$, a contradiction since \( \sbridge(\sigma \cup m_i) = m_i \). Thus, $m_{i+p}\notin \sigma$.
    
    Now, to verify (c), assume that $1 \leq i \leq p-1$ and   $\sigma \cap \{m_1,\ldots, m_{i-1}\} =\emptyset$. Given that $m_n \in M_i$ for the stated range of $i$, we have $m_n \in \sigma \cap M_i$. If there exists another monomial $m_t \in \sigma \cap M_i$  with $t \notin \{1, \ldots, i\}$, the monomial $m_n$ would emerge as a bridge for $\sigma \cup m_i$ by \cref{cor:cycle_bridge} for the triple $(m_t, m_n, m_i)$ since $(n+i)-p+1< t$. This implies that $\sbridge(\sigma \cup m_i) = m_n$, a contradiction. Hence,  $\sigma \cap M_i =\{ m_n\}$. For the last segment of (c), if we assume $m_{n+i-p} \in \sigma$, it results with $m_n$ as a bridge of $\sigma\cup m_i$ by \cref{cor:cycle_bridge}  for the triple $(m_{n+i-p}, m_n,m_i)$. This would imply that $\sbridge(\sigma \cup m_i) = m_n$, which is, yet again, contradictory to $\sbridge(\sigma \cup m_i) = m_i$, thereby confirming that $m_{n+i-p} \notin \sigma$.

For the reverse direction, assume  (a), (b), and (c) hold. We argue by contradiction and assume that $m_i$ is not a true gap of $\sigma$. Then there exists a monomial  $m_t \in \sigma$ that is a bridge of $\sigma \cup m_i$ but is not a bridge of $\sigma$, with $m_i \succ m_t$.  Furthermore, by \cref{cor:cycle_bridge}, there exist monomials $m_{j_t}$ and $m_{k_t}$  in $ (\sigma  \setminus m_t)\cup m_i$   such that
 $k_t-j_t\leq p$ when $j_t <t<k_t$ or  $(j_t+n)-k_t\leq p$ otherwise. 

Note that  either $j_t = i$ or $k_t = i$ since  $m_t$ is not a bridge of $\sigma$. As in the proof \cref{cor:cycle_bridge}, we may assume that $m_{j_t}$ and $m_{k_t}$ are consecutive monomials in  $(\sigma \setminus m_t) \cup m_i  $ that are closest to $m_t$. If $j_t < t < k_t$, we must have $j_t = i$ since  $m_i \succ m_t$. The reasoning for this case mirrors the corresponding portion of  \cref{prop:path_truegap}, eventually leading to a contradiction.

For the remaining case (2),   we have:
\begin{equation}\label{eq:ineq1}
    j_t<k_t<t<j_t+n\leq k_t+p\leq n+p. \tag{$\star$}
\end{equation}   
Since   $i < t$, either $k_t=i$ or $j_t=i$. If $k_t = i$, then $m_{j_t} \in \sigma$.  It follows from (\ref{eq:ineq1}) that $m_t\in \sigma \cap M_{i+p-1}$. Then by (b) and the fact that $m_{j_t}=m_{j_t+n} \in \sigma$, we have  $j_t+n=i+p$ which means  $m_{j_t+n} = m_{i+p} \in \sigma$, a contradiction to (b). 

For the final case, we can assume $j_t=i$ and we have $m_{k_t}\in \sigma$ in this case. Next, we verify that conditions of (c) are satisfied. Notice that $\sigma \cap \{m_1, \ldots, m_{i-1}\}=\emptyset$ since $j_t$ and $k_t$ are chosen to be the closest monomials to $m_t$ in $(\sigma \setminus m_t) \cup m_i$. In addition, $i < p$ as a consequence of $i+n\leq k_t+p\leq n+p$ from (\ref{eq:ineq1}). Then  (c) holds, namely, $\sigma \cap M_i=\{m_n\}$ where 
$M_i=\{ m_{(n+i) -p+1}, \ldots, m_n, m_1, \ldots, m_i \}.$ Notice that $m_t \in \sigma \cap M_i$ since  $(i+n)-p \leq k_t < t\leq n$.
Thus, $m_t=m_n$ and  $m_{k_t} = m_{n+i-p} \in \sigma$, a contradiction to (c). Therefore, $m_i$ is indeed a true gap of $\sigma$.
\end{proof}

Having addressed the true gaps of \( A_u \)-critical cells in the preceding proposition, our focus now shifts to the distinct classes of critical cells introduced in \cref{def:criticals}: the absolutely critical and the fortunately critical cells. As noted in  \cref{cor:abs_critical}, the absolutely critical cells are uniquely characterized by the lack of both bridges and true gaps. On the other hand, the fortunately critical cells stand out. Their first element serves as their smallest bridge while still having no true gaps.

\begin{lemma}\label{prop:criticalshape}
    Let $\sigma\in V(G_u)$ be an \(A_u\)-critical cell. If $\sigma$ is fortunately critical, then:
    \begin{enumerate}
        \item The smallest bridge of \( \sigma \) satisfies \( \sbridge(\sigma) \succ m_p \).
        \item If $\sigma=\{m_{i_1},\ldots, m_{i_t}\}$, then $\sbridge(\sigma)=m_{i_1}.$
    \end{enumerate}
    Furthermore,  no monomial from the set \( \{ m_p, m_{p+1}, \ldots , m_n \} \) serves as a bridge or a true gap of \( \sigma \) (irrespective of whether $\sigma$ is fortunately or absolutely critical).
\end{lemma}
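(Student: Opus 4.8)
The plan is to establish the three assertions in the order (2), (1), and then the ``furthermore'' claim, since (2) is essentially a structural fact about fortunately critical cells that feeds into both of the others. For (2), recall from \cref{def:criticals} that a fortunately critical cell $\sigma$ is potentially-type-2 but not type-2; in particular $\sbridge(\sigma)$ exists. By \cref{prop:Am-critical} we have $m_n\in\sigma$, and by \cref{cor:cycle_bridge} a monomial $m_{i_j}\in\sigma$ with $1\le j\le t$ is a bridge precisely when it has neighbors in $\sigma$ on both sides within cyclic distance $p$. Writing $\sigma=\{m_{i_1},\dots,m_{i_t}\}$ with $i_1<\cdots<i_t=n$, I want to argue that $m_{i_1}$ is a bridge: its left neighbor along the cycle is $m_{i_t}=m_n$, and since $\lcm(\sigma)=u=x_1\cdots x_n$, \cref{prop:verticesGm} forces $i_1+n-i_t\le p$ and $i_2-i_1\le p$, so $m_{i_1}$ has the required neighbors $m_{i_t}$ and $m_{i_2}$. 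Hence $m_{i_1}$ is a bridge, and since it is the $\succ$-smallest element of $\sigma$ among its elements with larger indices, and $\sbridge(\sigma)$ is by definition the smallest bridge with respect to $(\succ)$, which orders $m_1\succ\cdots\succ m_n$, we get $\sbridge(\sigma)=m_{i_1}$. (One must be slightly careful: ``smallest with respect to $\succ$'' means largest index, so I should double-check that no $m_{i_j}$ with $j>1$ fails to be relevant — but since $\sbridge$ picks the $\succ$-minimum over \emph{all} bridges and $m_{i_1}$ is a bridge, it suffices that $m_{i_1}$ is $\succ$-below every other element of $\sigma$, which is immediate as $i_1<i_j$ for $j>1$.)

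For (1), I use the characterization of potentially-type-2 cells in \cref{rem:pot-type-2}: every true gap $m$ of $\sigma$ satisfies $m\succ_I\sbridge(\sigma)$. Since $\sigma$ is fortunately critical it has no true gaps at all (being critical means untouched by the final matching, and by \cref{cor:abs_critical} plus \cref{rem:pot-type-2} its only obstruction to being absolutely critical is having a bridge). So I instead argue directly via \cref{prop:cycle_truegap}. Suppose toward contradiction $\sbridge(\sigma)=m_{i_1}$ with $i_1\le p$. I claim some $m_i$ with $i\le p-1$ is then a true gap of $\sigma$, or else derive a contradiction with the potentially-type-2 condition; concretely, I look for a gap $m_i$ with $i<i_1$ (which exists unless $i_1=1$, a case handled separately) and verify conditions (a)--(c) of \cref{prop:cycle_truegap} using the structure from (2) — namely that the elements of $\sigma$ near the ``wrap-around'' point are tightly constrained by $i_1+n-i_t\le p$. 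Such a true gap $m_i$ would have $m_i\succ m_{i_1}=\sbridge(\sigma)$, contradicting \cref{rem:pot-type-2}. Thus $i_1>p$, i.e.\ $\sbridge(\sigma)=m_{i_1}\succ m_p$.

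Finally, for the ``furthermore'' statement — that no monomial from $\{m_p,m_{p+1},\dots,m_n\}$ is a bridge or a true gap of $\sigma$, whether $\sigma$ is fortunately or absolutely critical — I split on the two cases. If $\sigma$ is absolutely critical, \cref{cor:abs_critical} says it has \emph{no} bridges and \emph{no} true gaps whatsoever, so the claim is vacuous. If $\sigma$ is fortunately critical, then by part (1) the only bridge that could occur is $\sbridge(\sigma)$ together with possibly larger ones, but $\sbridge(\sigma)\succ m_p$ means $\sbridge(\sigma)=m_{i_1}$ with $i_1<p$ — wait, here I must reconcile the direction: $\sbridge(\sigma)\succ m_p$ means $\sbridge(\sigma)$ has index $<p$, so \emph{every} bridge of $\sigma$ has index $<p$ (as $\sbridge$ is the $\succ$-smallest, i.e.\ largest-index, bridge, so all bridges have index $\le i_1<p$). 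Hence no $m_i$ with $i\ge p$ is a bridge. For true gaps: a fortunately critical cell has no true gaps at all (it is critical and its being potentially-type-2-but-not-type-2 with $\sbridge(\sigma)$ being its smallest bridge is compatible with \cref{rem:pot-type-2} only if it has no true gaps, since otherwise one checks against \cite[Lemma 2.33]{chau2022barile}); in particular no $m_i$ with $i\ge p$ is a true gap. I expect the main obstacle to be the case analysis in part (1): carefully producing a genuine true gap $m_i$ of index $<i_1$ and checking all three conditions (a), (b), (c) of \cref{prop:cycle_truegap}, especially condition (c) which involves the wrap-around set $M_i$ for small $i$, and handling the boundary cases $i_1=1$ and $i_1$ close to $p$ separately.
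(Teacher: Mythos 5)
Your proposal does not track the paper's proof, and more importantly it contains a substantive error that sinks the argument for part (2), which you use as the foundation for the rest.

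\textbf{The main gap: part (2).} You claim that $m_{i_1}$ is automatically a bridge of $\sigma$, citing \cref{prop:verticesGm} to get $i_1 + n - i_t \le p$ and $i_2 - i_1 \le p$. But these inequalities give the distances from $m_{i_1}$ to its two neighbors $m_{i_2}$ and $m_{i_t}=m_n$; they do \emph{not} give the distance from $m_{i_2}$ to $m_{i_t}$ measured \emph{along} $m_{i_1}$, which by \cref{cor:cycle_bridge} is $i_2 + n - i_t = i_2$. The cell $\sigma_i=\{m_j : j \equiv i \pmod{p+1}\}$ from \cref{prop:criticalofcycles} (with $r=0$) has $i_2 = i + (p+1) > p$, so $m_{i_1}$ is not a bridge of it at all — indeed $\sigma_i$ has no bridges. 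So ``$m_{i_1}$ is a bridge'' is false in general. Moreover, your logic for identifying $\sbridge(\sigma)$ is inverted: $m_{i_1}$ is the $\succ$-\emph{largest} element of $\sigma$ (smallest index), while $\sbridge$ is the $\succ$-\emph{smallest} bridge. The fact that $m_{i_1}$ dominates all other elements of $\sigma$ therefore works \emph{against} the conclusion, not for it: to get $\sbridge(\sigma)=m_{i_1}$ you have to show no element of $\sigma$ with a larger index is a bridge, which you never address.

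\textbf{The key missing idea.} You never use the actual definition of ``fortunately critical,'' namely the existence of a partner cell $\sigma'$ with $\sigma\setminus\sbridge(\sigma)=\sigma'\setminus\sbridge(\sigma')$ and $\sbridge(\sigma)\succ\sbridge(\sigma')$. The paper's proof of both (1) and (2) hinges entirely on manipulating this $\sigma'$: it shows $\sbridge(\sigma')$ is a true gap of $\sigma\setminus\sbridge(\sigma)$ but not of $\sigma$, forces $\sbridge(\sigma\cup\sbridge(\sigma'))=m_n$, and then reads (1) and (2) off the resulting inequalities from \cref{cor:cycle_bridge}. Your sketch for (1) (constructing a true gap of small index by checking the conditions of \cref{prop:cycle_truegap}) is left entirely unverified, and the paper's route — extracting $j+n\le k+p$ with $k\le n$ from the partner cell, then ruling out $j=p$ — is much more direct. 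Finally, in the ``furthermore'' part you assert that a fortunately critical cell has no true gaps at all; that is not true and is not needed. By \cref{rem:pot-type-2} a potentially-type-2 cell can have true gaps, but each dominates $\sbridge(\sigma)$; combined with (1), their indices are $<p$, which is exactly what the statement requires.
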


\begin{proof}
    Let $\sigma$ be a vertex in $G_u$ that is fortunately critical. Then $\sigma$ is potentially-type-2 and, by definition, there exists another vertex $\sigma'$ such that $\sigma\backslash \sbridge(\sigma)=\sigma'\backslash \sbridge(\sigma')$ and $\sbridge(\sigma)\succ \sbridge(\sigma')$.

  Since $\sigma$ is potentially-type-2,  every true gap of \( \sigma \) dominates \( \sbridge(\sigma) \) by \cref{rem:pot-type-2}. So,  \( \sbridge(\sigma') \) cannot be a true gap of \( \sigma \) as $\sbridge(\sigma)\succ \sbridge(\sigma')$. Then  there exists a monomial \( m_i \in \sigma \) for which \( \sbridge(\sigma \cup \sbridge(\sigma')) = m_i \), with \( \sbridge(\sigma') \succ m_i \) while \( m_i \) is not a bridge of \( \sigma \). Note as well that $m_i \in \sigma'$ and it is not a bridge of $\sigma'$. Before proceeding further, we first state our goal: to show that \( m_i = m_n \).

  By \cref{cor:cycle_bridge}, there exist monomials $m_j$ and $m_k$ in  $\sigma \cup \sbridge(\sigma')$ with $1\leq j<k\leq n$  and $i\notin \{j,k\}$ such that 
       \begin{enumerate}
           \item[(a)] $k-j \leq p$ when  $j<i<k$, or
           \item[(b)] $(j+n)-k \leq p$ when $j<k<i<j+n$.
       \end{enumerate}
Note that $\sigma$ and $\sigma'$ overlap at every element except $\sbridge(\sigma)$ and $\sbridge(\sigma')$. In other words, 
$$\sigma \setminus \{\sbridge(\sigma),\sbridge(\sigma')\} =\sigma'  \setminus \{\sbridge(\sigma),\sbridge(\sigma')\}.$$
 Inevitably, we have $\{\sbridge(\sigma),\sbridge(\sigma')\}= \{m_j, m_k\}$. Otherwise,  $m_i$ is a bridge of $\sigma$ and $\sigma'$, which is not possible. 

Notice that \( j<i<k \) is not possible since $\sbridge(\sigma)\succ \sbridge(\sigma') \succ m_i$. So, we must have $j<k<i<j+n$ as in (b) above.  Consequently, $m_j=\sbridge(\sigma)$ and $m_k=\sbridge(\sigma')$. Since  $m_n\in \sigma\cup \sbridge(\sigma')$, we conclude that $m_n$ is a bridge of $\sigma \cup \sbridge(\sigma')$ from \Cref{cor:cycle_bridge} by using the triple $(m_k,m_n,m_{j+n})$ and (b). This means $m_i=m_n$.
   
     \begin{enumerate}
      \item The proof of \( \sbridge(\sigma) \succ m_p \) follows from the following simple observation: the inequality \( j+n \leq k+p \leq n+p \) from (b) implies that \( j < p \).  Moreover, it is not possible to have \( j = p \), as this would imply \( k = n \), leading to \( \sbridge(\sigma') = m_n \), which is a contradiction. Thus, we conclude that \( j < p \), which is equivalent to \( m_j = \sbridge(\sigma) \succ m_p \).

    \item  Let  \( \sigma = \{ m_{i_1}, \ldots, m_{i_t} \} \) where $i_1<\cdots<i_t$. Our goal is to demonstrate  \( \sbridge(\sigma) = m_{i_1} \), i.e. $i_1=j$.  On the contrary, suppose there is a monomial \( m_s \) in \( \sigma \) such that \( m_s \succ \sbridge(\sigma) = m_{j} \). Then, monomial \( m_n \) is a bridge of \( \sigma' \) from \cref{cor:cycle_bridge}  by using the triple   \( (m_{k}, m_n, m_{s+n}) \) where each monomial is in \( \sigma' \) and  \( (s + n) - k < p \) by (b) above. This posits a contradiction since \( \sbridge(\sigma') =m_k\succ m_n \). Thus, \( \sbridge(\sigma) = m_{i_1} \).
  \end{enumerate}

  For the final part of the statement, consider a vertex $\sigma$ in $G_u$. If $\sigma$ is absolutely critical, it lacks both bridges and true gaps, by \cref{cor:abs_critical}, thereby satisfying the given statement. When \( \sigma \) is fortunately critical, the statement remains valid due to \( \sbridge(\sigma) \succ m_p \), and the fact that each true gap of \( \sigma \) dominates \( \sbridge(\sigma) \) by \cref{rem:pot-type-2}.
\end{proof}

Our primary objective is to comprehensively identify every element within an \(A_u\)-critical cell. We begin our identification with a series of observations and initiate the process by pinpointing specific values of \(j\) for which \(m_j \in \sigma\).

\begin{lemma}\label{lem:inside_sigma}
    Let $\sigma=\{m_{i_1},\ldots, m_{i_t}\}$ be an $A_u$-critical cell where $i_1<\cdots <i_t$. Then
\begin{enumerate}
    \item[(a)]  $i_t=n$, and $m_n$ is not a bridge for $\sigma$.
    \item[(b)] $i_{t-1}= n-k$ for some $1\leq k\leq p$.
    \item[(c)] $i_{t-2} < n-p$.
    \item[(d)] $i_1= p-k+1$ where $k$ is given as in (b).
\end{enumerate}
\end{lemma}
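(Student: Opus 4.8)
The plan is to work through the four assertions in order, using \cref{prop:Am-critical}, \cref{prop:verticesGm}, \cref{cor:cycle_bridge}, \cref{prop:cycle_truegap}, and \cref{prop:criticalshape} repeatedly, together with the defining fact that an $A_u$-critical cell has no bridges (if absolutely critical) or has $\sbridge(\sigma)\succ m_p$ (if fortunately critical). Throughout, I would keep in mind that $\sigma$ being critical means it has no true gaps beyond those dominating $\sbridge(\sigma)$, and — crucially — that no monomial among $m_p,\dots,m_n$ is a bridge or a true gap of $\sigma$ (the final sentence of \cref{prop:criticalshape}).

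For part (a): $m_n\in\sigma$ is immediate from \cref{prop:Am-critical}, so $i_t=n$. That $m_n$ is not a bridge of $\sigma$ follows from the last line of \cref{prop:criticalshape}. Since $i_t=n$, renumber so $\sigma=\{m_{i_1},\dots,m_{i_{t-1}},m_n\}$ with $i_1<\cdots<i_{t-1}<n$. For part (b): apply \cref{prop:verticesGm} to the consecutive pair $(m_{i_{t-1}},m_n)$ — the "wrap-around gap" condition forces $i_t - i_{t-1}\le p$, i.e. $n - i_{t-1}\le p$, so $i_{t-1}=n-k$ for some $1\le k\le p$ (and $k\ge 1$ because $i_{t-1}<n$). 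For part (c): suppose for contradiction $i_{t-2}\ge n-p$, i.e. $n - i_{t-2}\le p$. Then $m_n$ would have two neighbours in $\sigma$ on the "left side" — namely $m_{i_{t-2}}$ and $m_{i_{t-1}}$ — both within distance $p$ of $x_n$ region; more carefully, I would show $m_{i_{t-1}}$ (or $m_n$) becomes a bridge: the monomials $m_{i_{t-2}}$ and $m_{i_1}$ (using the wrap) sandwich $m_{i_{t-1}}$ with the relevant distance $\le p$, making $m_{i_{t-1}}$ a bridge of $\sigma$. But $m_{i_{t-1}}=m_{n-k}$ lies in $\{m_p,\dots,m_n\}$ only when $n-k\ge p$; if instead $n-k<p$ one argues directly that $m_n$ itself becomes a bridge via \cref{cor:cycle_bridge}(2b), again contradicting part (a). Either way we reach a contradiction, so $i_{t-2}<n-p$.

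For part (d): this is the step I expect to be the main obstacle, since it pins down $i_1$ exactly rather than just bounding it, and it must interlock with the value $k$ from part (b). The idea is to consider the gap monomials $m_1,\dots,m_{p-1}$ "just after the wrap" and use the true-gap characterization \cref{prop:cycle_truegap}(c): for $1\le i\le p-1$ with $\sigma\cap\{m_1,\dots,m_{i-1}\}=\emptyset$, criticality forces either $m_i\in\sigma$ or else the conditions of (c) fail. Concretely, I would let $i_1$ be the smallest index present; the interval $\{m_1,\dots,m_{i_1-1}\}$ is disjoint from $\sigma$, so if $i_1\le p-1$ the monomial $m_{i_1-1}$ (or a suitable $m_i$ with $i<i_1$) is a gap satisfying the hypothesis of \cref{prop:cycle_truegap}(c); for it not to be a true gap we need its condition (c) to fail, which — given $m_n\in M_i$ and $i_{t-1}=n-k$ — forces a precise relation, and I expect this to resolve to $m_{i-p}\in\sigma$ being violated unless $i_1 = p-k+1$. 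The matching distance condition $i_1 + n - i_t \le p$ from \cref{prop:verticesGm} gives $i_1\le p$, and combining with the wrap-around neighbour of $m_n$ being $m_{i_{t-1}}=m_{n-k}$: the stretch of "empty" indices from $i_{t-1}+1=n-k+1$ through $n$ and then $1$ through $i_1-1$ must have length $\le p$ for $\lcm$ to stay $x_1\cdots x_n$ (covering $x_n, x_1,\dots$), which forces $(k-1) + (i_1-1) \le p-1$... and I would turn the reverse inequality (that it cannot be shorter, else some $m_j$ with $p\le j\le n$ becomes a true gap or bridge, contradicting \cref{prop:criticalshape}) into the exact equality $i_1 = p-k+1$. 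The delicate point is ensuring the indices involved genuinely fall in the range $\{p,\dots,n\}$ where \cref{prop:criticalshape}'s exclusion applies, handling small-$p$ or small-$k$ edge cases separately.
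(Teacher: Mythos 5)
Parts (a) and (b) are fine and match the paper's proof exactly: $m_n\in\sigma$ via \cref{prop:Am-critical}, and $i_t-i_{t-1}\le p$ via \cref{prop:verticesGm}. For (a) the paper actually uses the case split from \cref{prop:criticalshape}(2) (absolutely critical $\Rightarrow$ no bridges; fortunately critical $\Rightarrow\sbridge(\sigma)=m_{i_1}\succ m_n$) rather than the last sentence of \cref{prop:criticalshape}, but both are valid.

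For (c) your reasoning is muddled in a way that would not compile into a correct proof. You propose showing $m_{i_{t-1}}$ is a bridge because $m_{i_{t-2}}$ and $m_{i_1}$ ``sandwich'' it, but that wrap-around distance $n-i_{t-2}+i_1$ has no reason to be $\le p$. The witness pair is $m_{i_{t-2}}$ and $m_n$ (not $m_{i_1}$): if $i_{t-2}\ge n-p$ then $n-i_{t-2}\le p$, so \cref{cor:cycle_bridge}(2a) with $j=i_{t-2}<n-k<n$ makes $m_{n-k}$ a bridge. The contradiction then comes from \cref{prop:criticalshape}(2): $\sbridge(\sigma)=m_{i_1}$ is the smallest bridge, yet $m_{i_1}\succ m_{n-k}$ since $i_1<n-k$. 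Your alternative appeal to the last sentence of \cref{prop:criticalshape} (``no bridge in $\{m_p,\dots,m_n\}$'') only covers $n-k\ge p$ and leaves a gap for small $n-k$; the $\sbridge$ argument covers all cases.

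Part (d) is where your proposal genuinely diverges from the paper and contains an error. The claim that the $\lcm$ condition forces $(k-1)+(i_1-1)\le p-1$ is false: preserving $\lcm(\sigma)=x_1\cdots x_n$ is already encoded in \cref{prop:verticesGm}, which bounds each \emph{individual} consecutive gap by $p$ (giving $k\le p$ and $i_1\le p$ separately), not the sum of two consecutive gaps. The paper's argument is different and you should reproduce it: first the lower bound $i_1\ge p-k+1$ comes from a bridge argument (if $m_i\in\sigma$ for some $i\le p-k$, then $i+n-(n-k)=i+k\le p$, so $m_n$ is a bridge of $\sigma$ by \cref{cor:cycle_bridge}(2b), contradicting (a)); then, for the upper bound with $2\le k\le p$ (the case $k=1$ being immediate), one shows that $m_{n-k+1}$ is a gap but \emph{not} a true gap of $\sigma$. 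Hence $\sigma\cup m_{n-k+1}$ acquires a bridge $m_b$ dominated by $m_{n-k+1}$, and $m_b=m_n$ is the only candidate; applying \cref{cor:cycle_bridge} to the bridge $m_n$ of $\sigma\cup m_{n-k+1}$ with its nearest neighbors $m_{n-k+1}$ and $m_{i_1}$ yields $i_1+n-(n-k+1)\le p$, i.e.\ $i_1\le p-k+1$. Your plan to instead run condition (c) of \cref{prop:cycle_truegap} on gaps $m_i$ with $i<i_1$ ``just after the wrap'' is not developed far enough to verify, and without the bridge-based lower bound it cannot pin down $i_1$ exactly.
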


We need to make assumptions on the possible values of \( t \). For instance, to consider (b), we require \( t \geq 2 \); for (c), we need \( t \geq 3 \), and so on. However, since these assumptions are clear from the context, we omit them.

\begin{proof}
   Assume that $\sigma = \{m_{i_1}, \ldots, m_{i_t}\}$ is an $A_u$-critical cell.

\begin{enumerate}
    \item[(a)]  We begin by noting that $m_{i_t} = m_n$ as per \cref{prop:Am-critical}. If $\sigma$ is absolutely critical, then it has no bridges by \cref{cor:abs_critical}. On the other hand, for a fortunately critical $\sigma$, \cref{prop:criticalshape} forces that $\sbridge(\sigma) = m_{i_1}$. Given that $m_{i_1} \succ m_n$, it is evident that $m_n$ is not a bridge for $\sigma$. 
    \item[(b)] Recall from \cref{prop:verticesGm} that  $i_t-i_{t-1} \leq p$. Then, \( i_{t-1} = n-k \) for some \( 1 \leq k \leq p \). 
    \item[(c)] For the sake of contradiction, suppose \( m_{n-j} \in  \sigma \) for \( 1 \leq k< j \leq p \). Then, \( m_{n-k} \) is a bridge for \( \sigma \) from \cref{cor:cycle_bridge} by using the triple $(m_{n-j},m_{n-k}, m_n)$. However, this contradicts the nature of \( \sigma \), as it is either absolutely critical (hence having no bridges) or fortunately critical (where \( \sbridge(\sigma) = m_{i_1} \)). So, $i_{t-2} < n-p$.
    \item[(d)]    Our initial step is to derive  \( p-k+1 \leq i_1 \leq p \). The upper bound \(i_1 \leq p\) is a direct consequence of \cref{prop:verticesGm} since $(i_1+n)-i_t\leq p$. To establish the lower bound,  if there exists an \( m_i \in \sigma \) for \( i \leq p-k \), then \( m_n \) is a bridge of \( \sigma \) from  \cref{cor:cycle_bridge} by using the triple $(m_{n-k}, m_n, m_{n+p-k})$.  This assertion, however, yields a contradiction by (a). Thus, \( p-k +1 \leq  i_1 \leq p \).

    If $k=1$, then it is immediate that \(i_1 = p\), satisfying the statement of (d). For \(2 \leq k \leq p\), it remains to show \(i_1 \leq p-k+1\). A significant insight here is that \(m_{n-k+1}\) cannot be a true gap of \(\sigma\). This is immediate when $\sigma$ is absolutely critical as it has no true gaps. Suppose $\sigma$ is fortunately critical. Then  $\sbridge(\sigma)=m_{i_1}$ by \cref{prop:criticalshape} (2) and  every true gap of $\sigma$ dominates $\sbridge(\sigma)$ by \cref{rem:pot-type-2} as $\sigma$ is potentially-type-2. Since $\sbridge(\sigma)=m_{i_1} \succ m_{n-k} \succ m_{n-k+1}$, then  $m_{n-k+1}$ cannot be a true gap of $\sigma$. This means \(\sigma \cup m_{n-k+1}\) has a  bridge \(m_i \in \sigma\) such that \(m_{n-k+1} \succ m_b\). Since the only monomial in $\sigma$ that is dominated by $m_{n-k+1}$ is $m_n$, we have $m_i=m_n$. Using \cref{cor:cycle_bridge} for the triple $(m_{n-k+1},m_n,m_{n+i_1})$, we conclude that \(i_1 \leq p-k+1\). \qedhere
\end{enumerate}
\end{proof}

To identify  other elements of \( \sigma \), we examine them in relation to the possible values of \( k \). Here, \( n-k \) is the penultimate element of \( \sigma \) when \( 1 \leq k \leq p \). The next two lemmas address the \( k=1 \) case and the \( 2 \leq k \leq p \) case separately due to nuanced variations in their proofs. Together, these lemmas give a complete overview of all \( A_u \)-critical cells in \cref{prop:criticalofcycles}.

\begin{lemma}\label{lem:k=1}
  Let $\sigma=\{m_{i_1},\ldots, m_{i_t}\}$ be an $A_u$-critical cell with $i_{t-1}=n-1$. Then, the distance between consecutive elements of \(\sigma\) alternates between \(1\) and \(p\). Specifically, 
\[
\sigma = \{m_{p}, \ldots, m_{n-(p+1)}, m_{n-(p+1)}, m_{n-1}, m_n\}.
\] 
Moreover, we have \( p \equiv n\) or \(n-1 \pmod{p+1}\).
\end{lemma}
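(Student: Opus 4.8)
\textbf{Proof approach for Lemma~\ref{lem:k=1}.}
The plan is to exploit the structural constraints accumulated in \cref{prop:verticesGm,prop:cycle_truegap,cor:cycle_bridge,prop:criticalshape} and \cref{lem:inside_sigma}. From \cref{lem:inside_sigma}, setting $k=1$ already pins down the three ``last'' elements as $m_{i_{t-2}}, m_{n-1}, m_n$ with $i_{t-2} < n-p$, and also forces $i_1 = p$. The first step is to show that in fact $i_{t-2} = n-(p+1)$: by \cref{prop:verticesGm} the gap between $m_{i_{t-2}}$ and $m_{n-1}$ is at most $p$, so $i_{t-2} \geq n-1-p$; combined with $i_{t-2} < n-p$ this yields $i_{t-2} = n-p-1$. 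So the tail of $\sigma$ is forced, and dually the head is forced to start at $m_p$.

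The core of the argument is an inductive/recursive claim: walking inward from $m_n$, the consecutive differences must alternate $1, p, 1, p, \dots$. I would phrase this as: suppose $m_j \in \sigma$ with $j \equiv n \pmod{p+1}$ is an element whose successor in $\sigma$ lies at distance $1$ (so the pattern so far is consistent); I claim the next element going down is at distance exactly $p$, i.e.\ $m_{j-p} \in \sigma$ and no $m_{j-\ell} \in \sigma$ for $1 \le \ell < p$. For the lower bound on the gap, if some $m_{j-\ell} \in \sigma$ with $1 \le \ell < p$, then $m_{j-\ell}$ would be a bridge of $\sigma$ (its removal does not change the $\lcm$ because $m_j$ and the element just below cover its variables — apply \cref{cor:cycle_bridge}), contradicting that $\sigma$ has no bridges except possibly $m_{i_1}$, and here the putative bridge is strictly interior hence not $m_{i_1}$. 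For the upper bound (the gap is exactly $p$, not larger), use \cref{prop:verticesGm} to get a gap $\le p$; to rule out that the gap is strictly between the ``$1$'' and ``$p$'' regime we instead look at $m_{j-p}$ as a candidate \emph{true gap}: if $m_{j-p} \notin \sigma$ we verify conditions (a),(b),(c) of \cref{prop:cycle_truegap} are met (the relevant $M$-sets contain exactly the expected single element because of the alternating structure established so far, and the $m_{(\cdot)+p}$, $m_{(\cdot)-p}$ non-membership conditions hold), producing a true gap not dominated below $\sbridge(\sigma)$ — a contradiction with \cref{prop:criticalshape} (no element of $\{m_p, \dots, m_n\}$ is a true gap) or with bridge-friendliness. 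Symmetrically, starting from $m_p = m_{i_1}$ and walking up, the first gap must be $p$, then $1$, alternating, since $m_{i_1}$ is the unique (possible) bridge and all others are forbidden.

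Once the alternation $1, p, 1, p, \dots$ is established from both ends and the two forced ends are $m_p$ (difference pattern starting with $p$) and $m_n$ (preceded by difference $1$), consistency of the two walks forces the displayed form $\sigma = \{m_p, m_{2p}, m_{2p+1}, \dots, m_{n-p-1}, m_{n-1}, m_n\}$ — writing it cleanly as alternating blocks. Finally, counting: the indices appearing are $p$, then every increment alternates adding $p+1$ (for a ``$p$ then $1$'' pair) until reaching $n$; the total span from the smallest index $p$ to $n$ must decompose as a sum of $(p+1)$'s plus a leftover accounting for whether $n$ is reached right after a ``$+1$'' step or a ``$+p$'' step, which gives $n - p \equiv 0$ or $-1 \pmod{p+1}$, i.e.\ $p \equiv n$ or $n-1 \pmod{p+1}$.

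The main obstacle I anticipate is the bookkeeping in the inductive step: one must be careful that the $M_i$-sets referenced in \cref{prop:cycle_truegap} really do contain \emph{exactly} the single predicted monomial of $\sigma$ at each stage, which requires that the alternating pattern has been rigidly established for all elements closer to $m_n$ (or $m_p$) before invoking the true-gap criterion — so the induction must be set up to carry the full ``alternation so far'' as its hypothesis, not just the location of the single previous element. A secondary subtlety is handling the small-$n$ or small-$p$ edge cases (e.g.\ when the ``interior'' of $\sigma$ between the forced head and tail is empty), which should be checked directly to confirm the congruence conclusion still holds.
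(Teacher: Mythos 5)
Your proposal follows essentially the same route as the paper: fix the forced tail $m_n, m_{n-1}, m_{n-p-1}$ via \cref{lem:inside_sigma} and \cref{prop:verticesGm}, establish that $\sigma$ is absolutely critical because $i_1=p$, and then propagate the alternating distances inward using a bridge argument (via \cref{cor:cycle_bridge}) for the ``$p$'' steps and a true-gap argument (via \cref{prop:cycle_truegap}) for the ``$1$'' steps. The paper phrases the true-gap step contrapositively (if $m_{n-p-2}\notin\sigma$ then, since $\sigma$ has no true gaps, a new bridge $m_b$ with $m_{n-p-2}\succ m_b$ must appear, and this is shown impossible), whereas you propose to verify (a)--(c) of \cref{prop:cycle_truegap} directly; these are equivalent. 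Two small bookkeeping slips worth fixing: in the bridge step the element that actually becomes a bridge is the ``interior'' element $m_{j-1}$ (squeezed between $m_j$ and $m_{j-1-\ell}$ at combined distance $\ell+1\le p$), not $m_{j-\ell}$ as written; and in the final congruence, since the indices in $\sigma$ descend in pairs $\{n-l(p+1),\,n-l(p+1)-1\}$, the condition is $n-p\equiv 0$ or $1\pmod{p+1}$ (not $0$ or $-1$), which then correctly yields $p\equiv n$ or $n-1\pmod{p+1}$.
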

\begin{proof}
First note that \(i_1=p\) by \cref{lem:inside_sigma}.  Then,  \(\sigma\) is an absolutely critical cell by \cref{prop:criticalshape}.  

The main idea of the proof revolves around retracing our steps from \(i_{t-1}\), pinpointing the preceding indices of elements in \( \sigma \) until we arrive at \(i_1=p\). A recurring and instrumental point from \cref{prop:verticesGm} is:  
\( i_{j}-i_{j-1} \leq p  \) for \( m_{i_{j-1}}, m_{i_j} \in \sigma \). We reference this result as ($\star$) throughout the proof.

   Recall from \cref{lem:inside_sigma}  (c) that  \(i_{t-2}\leq n-1-p\), indicating $i_{t-1}-i_{t-2}\geq p$ as $i_{t-1}=n-1$. This  bound coupled with ($\star$) yields to \(i_{t-1}-i_{t-2}= p\). So, the first step of our analysis is complete. If  \(i_{t-2} = i_{1}\), we are done; otherwise, we move on to \(i_{t-3}\).

    Next, we show that \(i_{t-2}-i_{t-3} =1\) which is equivalent to  obtaining \(m_{n-p-2} \in \sigma\).  For the sake of contradiction, suppose   \(m_{n-p-2} \notin \sigma\). Since $\sigma$ is absolutely critical, $m_{n-p-2}$ cannot be a true gap of $\sigma$. This means $\sigma \cup m_{n-p-2}$  has a  bridge \(m_b \in \sigma\)  such that  \(m_{n-p-2} \succ m_b\) and $m_b $ is not a bridge of $\sigma$. Then, by  \cref{cor:cycle_bridge}, there exists $m_s\in \sigma$ such that  the distance between $m_s$ and  $m_{n-p-2}$ along $m_b$ is at most $p$. Given that \(m_{n-p-2} \succ m_b\),  we have one of the following scenarios by  \cref{cor:cycle_bridge}: 
    \begin{enumerate}
        \item $s- (n-p-2)\leq p$ when $1\leq n-p-2<b<s\leq n$, or
        \item $s+n -(n-p-2)\leq p$ when  $1\leq s<n-p-2<b \leq n.$
    \end{enumerate}
    The latter case is not possible because it implies that $s\leq -2$. Thus, (1) holds.  In this case,  we have $m_b=m_{n-1}$ and $m_s=m_n$ since $m_b, m_s  \in \sigma$. Then, (1) results with $p+2\leq p$, a contradiction. Thus, we conclude that \(m_{n-p-2} \in \sigma\). So, \(i_{t-3} = n-p-2\). If  \(i_{t-3}=i_{1}\), our task is complete; otherwise, our focus shifts to \(i_{t-4}\).

    Towards showing \(i_{t-3}-i_{t-4} = p\), recall that   \(i_{t-3}-i_{t-4} \leq p\) by ($\star$), or equivalently, \(i_{t-4} \geq n-2(p+1)\). If \( m_{n-2(p+1)} \notin \sigma\), then $m_{i_{t-3}}$ is a bridge of $\sigma$ by   \cref{cor:cycle_bridge}  since $i_{t-2}-i_{t-4} \leq p$. This leads to a contradiction because $\sigma$ is absolutely critical.  Hence, $i_{t-4}= n-2(p+1)$. If  \(i_{t-4}=i_{1}\), our investigation is complete; otherwise, we continue in the same fashion for $i_{t-5}$.

    Subsequent distances between consecutive elements of $\sigma$ can be concluded by employing similar arguments in an alternating way until we reach \(i_1=p\). This results in the congruence \(p \equiv n \text{ or } n-1 \pmod{p+1}\), concluding the proof.
\end{proof}

\begin{lemma}\label{lem:bigk}
  Let $\sigma=\{m_{i_1},\ldots, m_{i_t}\}$ be an $A_u$-critical cell with $i_{t-1}=n-k$ for some $2\leq k\leq p$. Then, the distance between consecutive elements of \(\sigma\) alternates between \(k\) and \((p+1)-k\). Specifically, 
\[
\sigma = \{m_{p-k+1}, \ldots, m_{n-k-(p+1)}, m_{n-(p+1)}, m_{n-k}, m_n\}
\] 
Moreover, we have \( p-k+1 \equiv n\) or \(n-k \pmod{p+1}\).
\end{lemma}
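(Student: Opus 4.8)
The plan is to mimic the proof of \cref{lem:k=1} closely, but with the two alternating step-sizes being $k$ and $(p+1)-k$ instead of $1$ and $p$. The starting point is \cref{lem:inside_sigma}: we already know $i_t = n$, $i_{t-1} = n-k$, $i_{t-2} < n-p$, and $i_1 = p-k+1$. Unlike the $k=1$ case, $\sigma$ need not be absolutely critical here — by \cref{prop:criticalshape} it may be fortunately critical with $\sbridge(\sigma) = m_{i_1} = m_{p-k+1}$, and in either case the final sentence of \cref{prop:criticalshape} guarantees no monomial from $\{m_p, m_{p+1}, \ldots, m_n\}$ is a bridge or a true gap of $\sigma$. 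So I would first record that throughout the retracing, whenever a monomial $m_b$ with index $b \geq p$ is shown to be a bridge or true gap, that is an immediate contradiction; this is the replacement for "$\sigma$ is absolutely critical" used in \cref{lem:k=1}, and it suffices because all the indices we manipulate will lie in the range $\{p, \ldots, n\}$.

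Next I would retrace step by step, exactly alternating between the two cases of the $k=1$ proof. \emph{Step down by $(p+1)-k$:} Starting from $i_{t-1} = n-k$, I claim $i_{t-2} = n-(p+1)$, i.e.\ $m_{n-(p+1)} \in \sigma$. Proposition~\ref{prop:verticesGm} gives $i_{t-2} \geq i_{t-1} - p = n-k-p$; combined with $i_{t-2} < n-p$ (from \cref{lem:inside_sigma}(c)) this already pins $i_{t-2}$ into a short window. If $m_{n-(p+1)} \notin \sigma$, then $m_{n-(p+1)}$ (index $\geq p$) must fail to be a true gap of $\sigma$, so $\sigma \cup m_{n-(p+1)}$ acquires a new bridge $m_b \in \sigma$ with $m_{n-(p+1)} \succ m_b$; using \cref{cor:cycle_bridge} and the known nearby elements $m_{n-k}, m_n$, one checks the only consistent possibility forces a distance violation (the gap between $m_{n-(p+1)}$ and $m_n$ being $p+1 > p$), a contradiction. \emph{Step down by $k$:} From $i_{t-2} = n-(p+1)$ I claim $i_{t-3} = n-(p+1)-k$; if $m_{n-(p+1)-k} \notin \sigma$ then by \cref{prop:verticesGm} applied to the pair $(m_{i_{t-3}}, m_{i_{t-2}})$ one gets $i_{t-3} \geq n-(p+1)-p$, and if that index exceeds $n-(p+1)-k$ then $m_{i_{t-2}} = m_{n-(p+1)}$ becomes a bridge of $\sigma$ via \cref{cor:cycle_bridge} (its two neighbours in $\sigma$ are within distance $p$), contradiction. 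Then continue: the next gap is again $(p+1)-k$ by the same bridge-forcing argument as the first step, and so on, alternating $(p+1)-k, k, (p+1)-k, k, \ldots$ until the index reaches $i_1 = p-k+1$. Since each full period decreases the index by exactly $p+1$ and we must land on $p-k+1$, we obtain $n \equiv p-k+1$ or $n-k \equiv p-k+1 \pmod{p+1}$; rewriting, $p-k+1 \equiv n$ or $n-k \pmod{p+1}$, which is the claimed congruence.

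The main obstacle — and the place requiring the most care — is the book-keeping in the two "step" claims for general $k$: in \cref{lem:k=1} the arithmetic ($s \leq -2$ ruling out one orientation, distance $p+2$ ruling out the other) is clean, but here the windows involved have width depending on $k$, so I must verify that in each step the hypothetical new bridge/true-gap really is forced to use $m_{n-k}$ and $m_n$ (or the appropriate pair of already-placed elements) and that the resulting distance or index inequality is genuinely violated for \emph{every} $k$ in the range $2 \leq k \leq p$. In particular I need to confirm that the "new bridge $m_b$" always has index $\geq p$ so that \cref{prop:criticalshape} applies, and that no degenerate small-cardinality case (e.g.\ $t = 3$, so $i_{t-2} = i_1$) is overlooked — these are handled by the "if $i_{t-j} = i_1$ we are done; otherwise continue" stopping clause, exactly as in \cref{lem:k=1}. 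Once those inequalities are checked the induction runs mechanically, so I would state the first two steps in full and then invoke "similar arguments in an alternating way" for the rest, paralleling the write-up of \cref{lem:k=1}.
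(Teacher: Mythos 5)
The overall retracing strategy is the same as the paper's, but your replacement for the "absolutely critical" argument has a genuine gap. You assert that "all the indices we manipulate will lie in the range $\{p,\ldots,n\}$," and then lean on the final sentence of \cref{prop:criticalshape} to conclude that $m_{n-(p+1)}$ cannot be a true gap and that $m_{i_{t-2}}=m_{n-(p+1)}$ cannot be a bridge. That claim about index ranges is simply false in general: $n-(p+1)\geq p$ requires $n\geq 2p+1$, yet the lemma is invoked (via \cref{prop:criticalofcycles}) for every $n>p+1$. For, say, $n=p+3$ and $k=p$ with $p\geq 3$, the first index you would need to test is $n-p-1=2<p$, so \cref{prop:criticalshape} gives you nothing there; and the same failure recurs at later retracing steps (the test indices $n-(p+1)-k$, etc., drop below $p$ even sooner). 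You flag this concern yourself near the end ("I need to confirm that the new bridge $m_b$ always has index $\geq p$"), but the confirmation cannot be done, because it is not true.

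The paper's proof handles exactly this subcase by a different mechanism, and it is worth adopting it. To rule out $m_{n-p-1}$ being a true gap when its index is small, the paper splits on whether $\sigma$ is absolutely or fortunately critical: if fortunately critical, a true gap of $\sigma$ would have to dominate $\sbridge(\sigma)=m_{i_1}$ by \cref{rem:pot-type-2}; combined with $i_1$ being the least index in $\sigma$, this forces $m_{i_1}=m_{n-k}$ and hence $|\sigma|=2$, which is impossible for a cell with a bridge. Likewise, to show $m_{i_{t-2}}$ is not a bridge, the paper uses that $\sbridge(\sigma)=m_{i_1}$ is the \emph{unique} bridge of a fortunately critical $\sigma$ (smallest in $(\succ)$ means largest index; $i_1$ is the smallest index, so no other element can be a bridge), giving $i_{t-2}=i_1$, a contradiction with the standing assumption $i_{t-2}\neq i_1$. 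These arguments are index-agnostic and cover the range $n<2p+1$ where your shortcut breaks. If you replace your \cref{prop:criticalshape}-last-sentence appeals with this fortunately-critical analysis, the rest of your alternating induction and the concluding congruence computation go through as written.
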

\begin{proof}
   The idea behind this proof is similar to that of \cref{lem:k=1}.  Recall  that \(i_{t-2} \leq n-p-1\) and \( i_1=p-k+1\) by \cref{lem:inside_sigma} (c) and (d). So, our first goal is to verify  \(i_{t-2} = n-p-1\). 

    Assume, for the sake of contradiction, that \(m_{n-p-1} \notin \sigma\). We then observe that \(m_{n-p-1}\) cannot be a true gap of \(\sigma\). This is evident if \(\sigma\) is absolutely critical because it lacks true gaps. If \(\sigma\) is fortunately critical, then  \(\sbridge(\sigma) = m_{i_1}\) as stated in \cref{prop:criticalshape} and any true gap of $\sigma$ dominates \(\sbridge(\sigma) = m_{i_1}\) by \cref{rem:pot-type-2}. If \(m_{n-p-1}\) is a true gap of $\sigma$, then we must have $\sigma=\{m_{n-k}, m_n\}$ and it cannot have any bridges, a contradiction. So,  \(m_{n-p-1}\) cannot be a true gap of \(\sigma\). This means \(\sigma \cup m_{n-p-1}\) has a bridge \(m_b \in \sigma \) such that \(m_{n-p-1} \succ m_b\) and $m_b$ is not a bridge of $\sigma$. Using \cref{cor:cycle_bridge}, there exists a monomial \(m_s \in \sigma\) such that the  distance between $m_s$ and  \(m_{n-p-1}\) along $m_b$ is at most \(p\). In other words, given that \(m_{n-p-1} \succ m_b\), we have
    \begin{enumerate}
        \item $s- (n-p-1)\leq p$ when $1\leq n-1-p < b<s \leq n$ or 
        \item $s+n- (n-p-1)\leq p$ when $1\leq s < n-1-p<b\leq n$.
    \end{enumerate}

The latter is not possible since it implies \(s < -1\). Thus, (1) holds. Since $ m_b, m_s \in \sigma$, we have  \(m_b = m_{n-k}\) and \(m_s = m_n\). Using these in (1) results  with $p+1<p$, a contradiction. Hence, \(i_{t-2} = n-1-p\). If \(i_{t-2}=i_1\), our investigation concludes.   Suppose \(i_{t-2} \neq i_1\).

      The next step is to show \(i_{t-3} = n-k-(p+1)\). We first claim that $m_{i_{t-2}}$ cannot be a bridge of $\sigma$. If it is a bridge of $\sigma$, then  $\sbridge(\sigma)=m_{i_1}$ by \cref{prop:criticalshape} because $\sigma$ must be fortunately critical. This means $i_1=i_{t-2}$, a contradiction to our assumption. Since $m_{i_{t-2}}$ cannot be a bridge of $\sigma$, we have $i_{t-3}\leq n-k-(p+1)$ by \cref{cor:cycle_bridge}.  So, it suffices to show $m_{n-k-(p+1)} \in \sigma$.
      
      For the sake of contradiction, suppose $m_{n-k-(p+1)} \notin \sigma$. If $m_{n-k-(p+1)}$ is a true gap of $\sigma$, then  $\sigma$ is potentially-type-2 and $m_{n-k-(p+1)}\succ \sbridge(\sigma)=m_{i_1}$ by \cref{rem:pot-type-2} and  \cref{prop:criticalshape}. This can happen only when $i_1=i_{t-2}$, a contradiction. Thus, $m_{n-k-(p+1)}$ is not a true gap of $\sigma$. This means  $\sigma \cup m_{n-k-(p+1)}$ has a  bridge $m_b \in \sigma$ such that $m_{n-k-(p+1)}\succ m_b$ and $m_b$ is not a bridge of $\sigma$. Then, by \cref{cor:cycle_bridge}, there exists $m_s\in \sigma$ such that the distance between $m_{n-k-(p+1)}$ and $m_s$  along $m_b$ is at most $p$. 
     In other words, given that \(m_{n-k-(p+1)} \succ m_b\),  we have
    \begin{enumerate}
        \item $s- (n-k-(p+1))\leq p$ when $1\leq n-k-(p+1)< b<s\leq n$ or 
        \item $s+n- (n-k-(p+1))\leq p$ when $1\leq s< n-k-(p+1)< b\leq n$
    \end{enumerate}
    One can verify that (2) cannot happen. Then, by (1),
    the monomial $m_s$ is either $m_{n-k}$ or $m_n$ since $m_s \in \sigma$. Using either value of $s$ in (1) results with  $p+1 \leq p$, a contradiction. Therefore, $i_{t-3}= n-k-(p+1)$. If $i_{t-3}=i_1$, the process terminates here. If $i_{t-3}\neq i_1$, one can repeat the above arguments for the next steps until reaching $i_1=p-k+1$.   This results in the congruence \(p -k+1\equiv n \text{ or } n-k \pmod{p+1}\), concluding the proof.
\end{proof}

Below, we describe all $A_u$-critical cells. This proposition serves as the centerpiece of this chapter's main result.

\begin{proposition}\label{prop:criticalofcycles}
 Let $n=(p+1)q+r$ where $0\leq r \leq p$. Then, we have:    \begin{enumerate}
        \item For $r=0$, the only $A_u$-critical cells are the cells $\sigma_i$, where 
        \[
        \sigma_i := \{m_j \mid j \equiv i \pmod{p+1}\}
        \]
        for each $1 \leq i \leq p$. Moreover, each $\sigma_i$ is absolutely critical.
        \item For $r \neq 0$, the only $A_u$-critical cell is 
        \[
        \tau_r := \{m_j \mid j \geq r, j \equiv r,2r \pmod{p+1}\}.
        \]
        Moreover, the cell $\tau_r$ is absolutely critical if and only if $2r > p$.
    \end{enumerate}
\end{proposition}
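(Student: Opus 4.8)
The plan is to assemble this proposition directly from the structural Lemmas~\ref{lem:inside_sigma}, \ref{lem:k=1}, and \ref{lem:bigk}, which together pin down the shape of any $A_u$-critical cell once we know the value $k$ with $i_{t-1} = n-k$. First I would observe that by \cref{lem:inside_sigma}(b) every $A_u$-critical cell $\sigma$ has $i_{t-1} = n-k$ for some $1 \leq k \leq p$, so the two regimes $k=1$ and $2 \leq k \leq p$ from \cref{lem:k=1} and \cref{lem:bigk} are exhaustive. In the $k=1$ case, \cref{lem:k=1} forces consecutive gaps alternating between $1$ and $p$ starting from $m_n$ and ending at $m_p = m_{i_1}$, which is exactly the condition $p \equiv n$ or $n-1 \pmod{p+1}$; translating $n = (p+1)q + r$, this says $r \equiv 1$ or $0 \pmod{p+1}$, i.e. $r \in \{0,1\}$, and one checks the resulting set is $\sigma_1$ (when $r=0$) or the relevant $\tau_r$ (when $r=1$, where $p - 1 + 1 = p \equiv -1 \equiv n-1$, consistent with $2r=2$). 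In the $2 \leq k \leq p$ case, \cref{lem:bigk} gives alternating gaps $k$ and $(p+1)-k$ from $m_n$ down to $m_{p-k+1} = m_{i_1}$, with $p-k+1 \equiv n$ or $n-k \pmod{p+1}$.

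The core bookkeeping step is to convert these congruences into the indexing descriptions $\sigma_i$ and $\tau_r$ in the statement. Writing the elements of $\sigma$ from $m_n$ backwards, the indices are $n, n-k, n-(p+1), n-k-(p+1), n-2(p+1), \ldots$, so modulo $p+1$ they take exactly the two residues $n \equiv r$ and $n - k \equiv r - k \pmod{p+1}$. The condition that the smallest element equals $m_{i_1}$ with $i_1 = p-k+1 \equiv -k \equiv r \pmod{p+1}$ (one of the two allowed values) forces either $p - k + 1 \equiv n$, i.e. $-k \equiv r$, i.e. $k \equiv -r \equiv p+1-r$, or $p-k+1 \equiv n-k$, i.e. $1 \equiv r \pmod{p+1}$. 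When $r = 0$: the second alternative is impossible for $2 \leq k \leq p$, and the first gives $k = p+1-0$ which is out of range — so all $A_u$-critical cells with $r=0$ come from varying which arithmetic progression of common difference $p+1$ we land on; running through $k = 1, \ldots, p$ and noting the residues are $\{0, -k\} = \{p+1, p+1-k\}$, these are precisely the $\sigma_i = \{m_j : j \equiv i \pmod{p+1}\}$ for $i = 1, \ldots, p$, each a genuine single residue class (since $r=0$ means the two residues $n$ and $n-k$ collapse appropriately — I'd double-check: actually $n \equiv 0$ and $n-k \equiv -k$, so $\sigma$ is supported on two residues unless... here I need to be careful and verify that for $r=0$ the alternating pattern wraps to give a single residue class, which is the content of $p \equiv n$ or $n-1$; this is the delicate point). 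When $r \neq 0$: only $k = p+1-r$ survives (giving $2 \leq k \leq p$ iff $1 \leq r \leq p-1$, with $r = p$ handled by $k = 1$ via \cref{lem:k=1} since then $p \equiv n \pmod{p+1}$ as $n = (p+1)q + p$), and the residues become $\{r, r - k\} = \{r, r - (p+1-r)\} = \{r, 2r - (p+1)\} \equiv \{r, 2r\} \pmod{p+1}$, with the lower-bound constraint $j \geq r$ coming from $i_1 = p - k + 1 = r$; this is exactly $\tau_r$.

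The final claim — that $\sigma_i$ is always absolutely critical, and $\tau_r$ is absolutely critical iff $2r > p$ — I would deduce from \cref{prop:criticalshape} together with \cref{lem:inside_sigma}(d). By \cref{prop:criticalshape}, $\sigma$ is absolutely critical precisely when $\sbridge(\sigma) \neq m_{i_1}$, i.e. when $\sigma$ has no bridge at all; equivalently (by \cref{cor:abs_critical}, and since $A_u$-critical cells have no true gaps automatically) when $m_{i_1}$ is not a bridge. Using \cref{cor:cycle_bridge}, $m_{i_1} = m_{p-k+1}$ is a bridge of $\sigma$ iff there are elements of $\sigma$ on both sides within distance $p$ going around through $m_{i_1}$ — the relevant pair being $m_{i_t} = m_n$ and $m_{i_2}$, where $i_2 = i_1 + (\text{second gap})$; the gap immediately after $i_1$ is $(p+1)-k$ in the $\tau_r$ case (from \cref{lem:bigk}), so the wrap-around distance $i_1 + n - i_t = i_1 = p-k+1 = r$ versus... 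I need $i_1 + n - i_{t}$ compared to $p$: since $i_1 = r$ and $i_t = n$, this distance is $r$, always $\leq p$, so the obstruction is instead whether the *previous* element $m_{i_{t-1}} = m_{n-k}$ is close enough, i.e. whether $m_n$ sits between $m_{n-k}$ and $m_{i_1}$ within distance $p$: that distance is $k + r = (p+1-r) + r = p+1 > p$, so no — wait, that would always make $m_{i_1}$ non-bridge. The honest version: I expect the dichotomy $2r > p$ versus $2r \leq p$ to arise from whether the two residues $r$ and $2r \bmod (p+1)$ are "adjacent enough" near the wrap point that $m_{i_1}$ acquires a bridge from $m_{i_1+(p+1)-k}$ and $m_n$; I will need to carefully compute, from the explicit gap pattern in \cref{lem:bigk}, the distance from $m_{i_1}$ forward to its successor and backward (through the cycle) to $m_n$, and check that $m_{i_1}$ is a bridge exactly when $2r \leq p$. \textbf{This last computation — locating precisely the combinatorial inequality that flips $\tau_r$ between fortunately and absolutely critical — is the step I expect to be the main obstacle}; everything prior is a translation of the lemmas into residue arithmetic.
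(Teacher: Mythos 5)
Your overall strategy --- assemble the proposition from Lemmas~\ref{lem:inside_sigma}, \ref{lem:k=1}, and \ref{lem:bigk}, then convert the alternating-gap pattern into residue arithmetic modulo $p+1$ --- is exactly the route the paper takes, and the $r\neq 0$ bookkeeping (isolating $k=p+1-r$ and getting the residues $\{r,2r\}\pmod{p+1}$ with $i_1=r$) is carried out correctly. However, there are three concrete gaps.

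First, there is an arithmetic slip in the $r=0$ branch. You reduce $p-k+1\equiv n-k\pmod{p+1}$ to ``$1\equiv r$'' and conclude the second alternative is impossible; in fact it reduces to $p+1\equiv n$, i.e.\ $r=0$, which in the case $r=0$ is satisfied for \emph{every} $k\in\{1,\dots,p\}$. That is exactly how one obtains $p$ distinct critical cells indexed by $k$ when $r=0$; as written your accounting does not produce them cleanly.

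Second, you were right to be uneasy about the single-residue formula for $\sigma_i$, but you left the worry unresolved rather than settling it. Running Lemma~\ref{lem:bigk} for $r=0$ produces a cell whose indices occupy the \emph{two} residue classes $0$ and $p+1-k\pmod{p+1}$ (that it has $2q$ elements is also what the later corollary records, and it must contain $m_n$, whose index is $\equiv 0$, by Lemma~\ref{prop:Am-critical}). So the object the lemmas yield is $\{m_j : j\equiv 0 \text{ or } p+1-k \pmod{p+1}\}$; you need to assert this is the intended $\sigma_{p+1-k}$ and move on, not leave it as ``the delicate point.''

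Third, the absolute-versus-fortunate dichotomy is not the obstacle you anticipated, but you are computing the wrong distance. The quantity $k+r=(p+1-r)+r=p+1$ you compute is the distance from $m_{n-k}$ to $m_r$ through $m_n$; it governs whether $m_n$ is a bridge (it never is, consistent with Lemma~\ref{lem:inside_sigma}(a)). What decides the dichotomy is whether $m_{i_1}=m_r$ is a bridge, since by Lemma~\ref{prop:criticalshape} that is the only candidate for $\sbridge(\tau_r)$. By Proposition~\ref{cor:cycle_bridge}, $m_r$ is a bridge of $\tau_r$ iff its two neighbors in $\tau_r$, namely $m_n$ and $m_{2r}$, lie within distance $p$ of each other going through $m_r$; that distance is $r+r=2r$. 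Hence $m_r$ is a bridge exactly when $2r\le p$, so $\tau_r$ is absolutely critical exactly when $2r>p$; and in the $r=0$ case the analogous distance from $m_n$ to $m_{p+1}$ through $m_{p+1-k}$ is $(p+1-k)+k=p+1>p$, so $m_{p+1-k}$ is never a bridge and each $\sigma_i$ is absolutely critical. This is a two-line computation once the right pair of neighbors is identified, but as written your argument does not supply it.
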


\begin{proof}
    Let $\sigma = \{m_{i_1}, \ldots, m_{i_t}\}$ be an $A_u$-critical cell with $t \geq 2$. To identify all $A_u$-critical cells, we first address the immediate case where $n = p+1$. It is evident that $t \geq 3$ is not possible, as this would lead to $\sbridge(\sigma) = m_n$, a contradiction by \cref{lem:inside_sigma} (a). Consequently, $\sigma$ must be of the form $\sigma_i = \{m_i, m_n\}$ for all $1 \leq i \leq p$. A straightforward check confirms that every such $\sigma_i$ is absolutely critical. Having settled this case, we proceed under the assumption $n > p+1$.

    Recall from \cref{lem:k=1} and \cref{lem:bigk} that the distance between consecutive elements of \(\sigma\) alternates between \(k\) and \((p+1)-k\). Specifically,  for $1\leq k\leq p$, we have
\[
\sigma = \{m_{p-k+1}, \ldots, m_{n-k-(p+1)}, m_{n-(p+1)}, m_{n-k}, m_n\}
\]
where \( p-k+1 \equiv n\) or \(n-k \pmod{p+1}\). It is important to clarify that the distance between the first and last elements of \(\sigma\) is not considered.

Observe that the conditions \( p-k+1 \equiv n \pmod{p+1}\) and \(n-k \equiv 0 \pmod{p+1}\) are equivalently expressed as \( n  \equiv p+1-k \pmod{p+1}\) and \( n  \equiv 0 \pmod{p+1}\), respectively. Consequently, the cell \( \sigma \) can be expressed as follows:
\[
   \sigma = 
   \begin{cases} 
       \{m_{(p+1-k)+i(p+1)}, m_{(i+1)(p+1)} ~:~ 0\leq i\leq q-1\} & \text{if } r=0, \\
       \{m_{r+i(p+1)}, m_{2r+i(p+1)} ~:~  0\leq i\leq q-1 \} \cup \{m_n\} & \text{if } r\neq 0,
   \end{cases}
\]
where the former corresponds to \( \sigma_{p+1-k} \) for \( 1\leq k\leq p \) and the latter corresponds to \( \tau_r \) in the statement of the proposition. One can verify that each \( \sigma_{p+1-k} \) is absolutely critical by noticing that  they have no bridges and no true gaps for \( 1\leq k\leq p \). Furthermore, one can verify that \( \tau_r \) is  absolutely critical if and only if \( 2r > p \).
\end{proof}

  The conclusions drawn below directly arise from the descriptions of \( A_u \)-critical cells given in \cref{prop:criticalofcycles}.

\begin{corollary}
    Let $\sigma\in V(G_u)$ be a critical cells of $\G$. Then
    \[
    |\sigma|=\begin{cases}
        2q & \text{ if } n\equiv 0 \pmod{(p+1)},\\
        2q+1 & \text{ otherwise}.\\
    \end{cases}
    \]
    In particular, all $A_u$-critical cells have the same cardinality.
\end{corollary}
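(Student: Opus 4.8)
The plan is to read off the cardinalities directly from the explicit descriptions of the $A_u$-critical cells furnished by \cref{prop:criticalofcycles}, and then combine this with the earlier analysis of vertices $\sigma$ with $\lcm(\sigma)\neq u$ to handle the phrase ``all $A_u$-critical cells have the same cardinality''. Write $n=(p+1)q+r$ with $0\le r\le p$, exactly as in \cref{prop:criticalofcycles}.

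First I would treat the case $r=0$. By \cref{prop:criticalofcycles}(1), every $A_u$-critical cell is one of the $\sigma_i=\{m_j : j\equiv i\pmod{p+1}\}$ for $1\le i\le p$. Since the indices run from $1$ to $n=(p+1)q$, each residue class modulo $p+1$ contains exactly $q$ values of $j$ in $\{1,\dots,n\}$, so $|\sigma_i|=q$. Wait — this must be reconciled with the claimed answer $2q$; the resolution is that in the alternating description from \cref{lem:k=1}/\cref{lem:bigk} the cell $\sigma_{p+1-k}$ is written as $\{m_{(p+1-k)+i(p+1)},\ m_{(i+1)(p+1)} : 0\le i\le q-1\}$, i.e.\ it meets \emph{two} residue classes (namely $p+1-k$ and $0$), each contributing $q$ elements, for a total of $2q$. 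So I would use that two-residue-class form rather than the one-line $\sigma_i$ notation, and simply count: $q$ indices of the form $(p+1-k)+i(p+1)$ plus $q$ indices of the form $(i+1)(p+1)$, giving $|\sigma|=2q$.

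Next, for $r\neq 0$, \cref{prop:criticalofcycles}(2) gives the unique $A_u$-critical cell $\tau_r=\{m_{r+i(p+1)},\ m_{2r+i(p+1)} : 0\le i\le q-1\}\cup\{m_n\}$. Here I count $q$ elements with index $r+i(p+1)$, another $q$ with index $2r+i(p+1)$, and the extra element $m_n$; one must check these $2q+1$ indices are pairwise distinct and all lie in $\{1,\dots,n\}$ (this is the mild bookkeeping: $2r\not\equiv r\pmod{p+1}$ because $1\le r\le p$, and $n=(p+1)q+r$ is accounted for by $m_n$ since $n\equiv r\pmod{p+1}$ but $n\ge r$, so one should be careful not to double-count $m_n$ with the $i=q-1$ term when $2r\ge p+1$ — a short check resolves this). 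This yields $|\tau_r|=2q+1$. Finally, I would invoke \cref{obs:0} together with \cref{prop:content}: for any monomial $v\neq u=x_1\cdots x_n$ appearing as $\lcm$ of some cell, $G_v$ is (up to relabeling) the Taylor complex of a path ideal, and the generalized construction produces at most one $A_v$-critical cell of a prescribed $\lcm$; but since we only need the cardinality statement \emph{for $A_u$-critical cells}, the last sentence of the corollary (``all $A_u$-critical cells have the same cardinality'') follows immediately from the two displayed computations above, as the value depends only on whether $(p+1)\mid n$.

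The main obstacle is purely combinatorial bookkeeping: making sure that in the $r\neq 0$ case the element $m_n$ is genuinely distinct from all the $m_{r+i(p+1)}$ and $m_{2r+i(p+1)}$ (and not accidentally equal to the $i=q-1$ representative of one of those arithmetic progressions), so that the count is honestly $2q+1$ and not $2q$. Concretely one checks: $n\equiv r\pmod{p+1}$, so $m_n$ could only coincide with $m_{r+i(p+1)}$ for $i=q$, which is outside the range $0\le i\le q-1$; and $m_n$ coincides with $m_{2r+i(p+1)}$ only if $2r\equiv r\pmod{p+1}$, impossible for $1\le r\le p$. Once this is verified the corollary is immediate.
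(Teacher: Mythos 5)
Your proof is correct and follows the paper's (implicit) approach: the paper gives no separate argument for this corollary beyond saying it ``directly arises from'' \cref{prop:criticalofcycles}, and your explicit count of the indices in $\sigma_i$ (for $r=0$) and $\tau_r$ (for $r\neq 0$) supplies exactly that bookkeeping. You also correctly flagged that the one-line notation $\sigma_i := \{m_j \mid j \equiv i \pmod{p+1}\}$ in the statement of \cref{prop:criticalofcycles}(1) appears to give only $q$ elements; the two-residue-class form $\{m_{(p+1-k)+i(p+1)},\ m_{(i+1)(p+1)} : 0\le i\le q-1\}$ from the proof of that proposition (residues $i$ and $0$ modulo $p+1$) is what is intended, giving $|\sigma_i|=2q$, and your resolution via that form is the right reading.
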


We now present the main theorem of this section, which follows immediately from the preceding corollary, and the description of the differentials of the corresponding generalized Barile-Macchia resolution in \cref{thm:morseres}.

\begin{theorem}\label{thm:main_cycle}
    A generalized Barile-Macchia resolution of $I_p(C_n)$ is minimal. 
\end{theorem}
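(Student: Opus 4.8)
The plan is to verify the sufficient condition for minimality recorded in \cref{thm:morseres}: the generalized Barile-Macchia resolution $\mathcal{F}_A$ is minimal provided that no two $A$-critical cells $\sigma$ and $\sigma'$ with $|\sigma'| = |\sigma| - 1$ satisfy $\lcm(\sigma) = \lcm(\sigma')$. Here $A = \bigcup_{u} A_u$ is the matching from \cref{thm:genBM}; the total orders are chosen as $m_1 \succ \cdots \succ m_n$ for $u = x_1 \cdots x_n$, exactly as fixed throughout \cref{sec:cycle}, and for every other monomial $u$ we take a total order for which $G_u$ has precisely one $A_u$-critical cell, as furnished by \cref{obs:0}(2) (which relies on \cref{prop:content}).

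The first step is to localize the problem at a single multidegree. Suppose $\sigma$ and $\sigma'$ are $A$-critical cells with $\lcm(\sigma) = \lcm(\sigma') =: u$. By \cref{obs:0}(1) each of them is $A_v$-critical for some $v$, and since $V(G_v) = \{\tau \colon \lcm(\tau) = v\}$, that $v$ is forced to be the cell's own least common multiple. Hence $\sigma$ and $\sigma'$ both lie in $V(G_u)$ and are both $A_u$-critical, so it suffices to bound, for each fixed $u$, the possible cardinalities of $A_u$-critical cells.

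Next I would split into the two cases around which \cref{sec:cycle} is organized. If $u \neq x_1 \cdots x_n$, our choice of total order makes $G_u$ have exactly one $A_u$-critical cell, so $\sigma = \sigma'$ and in particular $|\sigma| = |\sigma'|$. If $u = x_1 \cdots x_n$, the corollary immediately preceding this theorem shows that every $A_u$-critical cell has the same cardinality, namely $2q$ when $n \equiv 0 \pmod{p+1}$ and $2q + 1$ otherwise, so again $|\sigma| = |\sigma'|$. In both cases $|\sigma'| = |\sigma| - 1$ is impossible, and \cref{thm:morseres} delivers minimality.

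I do not expect this final synthesis to be the obstacle; the difficulty has already been absorbed into the ingredients it invokes. The essential work is the complete classification of the $A_u$-critical cells for $u = x_1 \cdots x_n$ in \cref{prop:criticalofcycles} (built on \cref{lem:inside_sigma}, \cref{lem:k=1}, and \cref{lem:bigk}) together with the one-critical-cell-per-multidegree phenomenon for path ideals of paths from \cref{prop:content}; once these are available, all that remains is to note that they jointly forbid two critical cells of consecutive cardinalities from sharing a least common multiple.
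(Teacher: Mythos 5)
Your proposal is correct and follows the same route the paper takes: invoke the minimality criterion of \cref{thm:morseres}, observe via \cref{obs:0} (backed by \cref{prop:content}) that each multidegree $u \neq x_1\cdots x_n$ carries at most one critical cell, and handle $u = x_1\cdots x_n$ via the corollary that all $A_u$-critical cells there have equal cardinality. The paper states this in a single sentence ("follows immediately from the preceding corollary and \cref{thm:morseres}"), so your write-up is simply a more explicit unpacking of the same argument.
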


We conclude this section with a brief discussion. In \cite{chau2022barile}, the first two authors introduced and examined Barile-Macchia resolutions, which are cellular and independent of \(\characteristic(\Bbbk)\). Though effective for many classes of ideals, this method does not always produce a minimal free resolution. Specifically, the edge ideal of a 9-cycle, \(I_2(C_9)\), cannot be minimally resolved by a Barile-Macchia resolution, as highlighted in \cite{chau2022barile}.

\begin{corollary}
   Edge ideals of cycles have cellular minimal free resolutions.
\end{corollary}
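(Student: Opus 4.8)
The statement ``Edge ideals of cycles have cellular minimal free resolutions'' is the special case $p = 2$ of \cref{thm:main_cycle}, so the plan is simply to observe that \cref{thm:main_cycle} applies verbatim. Concretely, for a cycle $C_n$ the edge ideal $I(C_n)$ coincides with the $2$-path ideal $I_2(C_n)$, since edges are precisely the paths on $2$ vertices; this is the content of the \cref{rem} right after the definition of $p$-path ideals. We must only check that $C_n$ falls within the hypotheses imposed in \cref{sec:cycle}, namely $2 \le p \le n$: here $p = 2$, and $n \ge 3$ for any genuine cycle (indeed the running assumptions of the section allow $n = p$), so the constraint is satisfied.

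Granting this, \cref{thm:main_cycle} yields a generalized Barile-Macchia resolution of $R/I_2(C_n)$ that is minimal, and by \cref{thm:genBM} (via \cref{thm:morseres}) this resolution is cellular, being a Morse resolution induced by a homogeneous acyclic matching and hence supported on a CW-complex. Combining minimality with cellularity gives exactly the asserted conclusion. One should also remark, for completeness, that the degenerate small cases ($n = 3$, where $I(C_3)$ is the product of all three variables' pairwise products, and $n = 4$) are covered by the $n = p+1$ and $n > p+1$ analyses carried out in \cref{prop:criticalofcycles}, so no separate argument is needed.

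There is essentially no obstacle here: the corollary is a direct specialization, and the only thing to be careful about is matching conventions — that the indices-modulo-$n$ setup of \cref{sec:cycle} with $p = 2$ does produce the usual edge ideal $(x_1x_2, x_2x_3, \ldots, x_{n-1}x_n, x_nx_1)$, which is immediate from the displayed generating set $I_p(C_n) = (x_1\cdots x_p, \ldots, x_n\cdots x_{p-1})$. The one piece of genuine content already discharged earlier in the paper is the resolution of the $I_2(C_9)$ obstruction: an \emph{ordinary} Barile-Macchia resolution fails to be minimal there, so it is essential that \cref{thm:main_cycle} uses the \emph{generalized} construction with a separate total order $(\succ_u)$ per multidegree $u$. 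Thus the proof is a one-line deduction, and I would write it simply as: this is \cref{thm:main_cycle} with $p = 2$, together with the identification $I(C_n) = I_2(C_n)$.
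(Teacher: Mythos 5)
Your proposal is correct and follows exactly the paper's route: the corollary is precisely \cref{thm:main_cycle} specialized to $p = 2$, combined with the identification $I(C_n) = I_2(C_n)$ and the fact (from \cref{thm:genBM} and \cref{thm:morseres}) that generalized Barile-Macchia resolutions are cellular Morse resolutions. The paper leaves this deduction implicit, and your checks of the range $2 \le p \le n$ and the $n = p+1$ boundary case are consistent with what \cref{prop:criticalofcycles} already handles.
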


Echoing the approach of the preceding section, we conclude with an example presenting a CW complex that supports minimal free resolutions of the path ideals of cycles, where the projective dimension is equal to $2$.

\begin{example}
    Consider the cycle $C_n$ where $n=(p+1)+r$ for $1\leq r\leq p$. We can list all  critical cells of \(I_p(C_n)\) using \cref{prop:criticalofcycles} as follows:
\[ \emptyset, \{m_1\}, \dots, \{m_{p+2}\}, \{m_1, m_2\}, \{m_2,m_3\}, \dots, \{m_{n-1}, m_{n}\}, \{m_{n}, m_1\}, \{m_r, m_{2r}, m_{n}\}.\]
 The following $n$-gon, denoted by $\Delta$, is in correspondence with the critical cells of \(I_p(C_n)\):

\begin{center}
    \begin{tikzpicture}[scale = 2.5]
        \fill[fill = gray!30]
        (0, 0) -- (1, 0) -- (1.5, -0.7) -- (1, -1.4) -- (0, -1.4) -- (-0.5, -0.7) -- (0, 0);
        \draw[ultra thick, dotted]
        (-0.167, -1.167) -- (-0.333, -0.933);
        \draw
        (0, 0) -- node[above]{$\{m_1, m_2\}$} (1, 0) -- node[above right]{$\{m_2, m_3\}$} (1.5, -0.7) -- node[below right]{$\{m_3, m_4\}$} (1, -1.4) -- node[below]{$\{m_4, m_5\}$} (0, -1.4)
        (0, 0) -- node[above left]{$\{m_1, m_{n}\}$} (-0.5, -0.7);
        \filldraw[fill = white]
        (0, 0) circle (1pt) node[above]{$\{m_1\}$}
        (1, 0) circle (1pt) node[above]{$\{m_2\}$}
        (1.5, -0.7) circle (1pt) node[right]{$\{m_3\}$}
        (1, -1.4) circle (1pt) node[below]{$\{m_4\}$}
        (0, -1.4) circle (1pt) node[below]{$\{m_5\}$}
        (-0.5, -0.7) circle (1pt) node[left]{$\{m_{n}\}$};
        \draw
        (0.5, -0.6) node[below]{$\{m_r, m_{2r}, m_{n}\}$};
    \end{tikzpicture}
\end{center}

As in \cref{ex:1},  all restricted subcomplexes of $\Delta$ are either empty, a line segment, or the $n$-gon itself, which are all acyclic. Thus, $\Delta$ support the minimal free resolution of $R/I_p(C_n)$ by \cite[Lemma 2.2]{BPS98}.
\end{example}

\textbf{Acknowledgements.} We thank Sara Faridi for helpful discussions about the manuscript. The first author was supported by NSF grants DMS 1801285, 2101671, and 2001368. The second author was supported by NSF grant DMS-2418805.

\bibliographystyle{abbrv}
\bibliography{refs}
\end{document}